\def\rg{\hbox to 30pt{\rightarrowfill}}
\def\lg{\hbox to 30pt{\leftarrowfill}}
          \newtheorem{theorem}{Theorem}[section]
      \newtheorem{definition}[theorem]{Definition}
      \newtheorem{proposition}[theorem]{Proposition}
      \newtheorem{corollary}[theorem]{Corollary}
      \newtheorem{lemma}[theorem]{Lemma}
      \newtheorem{remark}[theorem]{Remark}
      \newcommand{\BB}{{\mathbb B}}
      \newcommand{\CC}{{\mathbb C}}
      \newcommand{\NN}{{\mathbb N}}
      \newcommand{\ZZ}{{\mathbb Z}}
      \newcommand{\DD}{{\mathbb D}}
      \newcommand{\RR}{{\mathbb R}}
      \newcommand{\FF}{{\mathbb F}}
      \newcommand{\cA}{{\mathcal A}}
      \newcommand{\cB}{{\mathcal B}}
      \newcommand{\cC}{{\mathcal C}}
      \newcommand{\cD}{{\mathcal D}}
      \newcommand{\cE}{{\mathcal E}}
      \newcommand{\cG}{{\mathcal G}}
      \newcommand{\cH}{{\mathcal H}}
      \newcommand{\cK}{{\mathcal K}}
      \newcommand{\cL}{{\mathcal L}}
      \newcommand{\cM}{{\mathcal M}}
      \newcommand{\cN}{{\mathcal N}}
      \newcommand{\cQ}{{\mathcal Q}}
      \newcommand{\cP}{{\mathcal P}}
      \newcommand{\cS}{{\mathcal S}}
      \newcommand{\cV}{{\mathcal V}}
      \newcommand{\supp}{\hbox{\rm{supp}}\,}
      \newcommand{\rank}{\hbox{\rm{rank}}\,}
      \newdimen\expt
      \def\boxit#1{\setbox0\hbox{$\displaystyle{#1}$}
            \hbox{\lower.4\expt
       \hbox{\lower3\expt\hbox{\lower\dp0
            \hbox{\vbox{\hrule height.4\expt
       \hbox{\vrule width.4\expt\hskip3\expt
            \vbox{\vskip3\expt\box0\vskip2\expt}%
       \hskip3\expt\vrule width.4\expt}\hrule height.4\expt}}}}}}
\begin{document}
       \pagestyle{myheadings}
      \markboth{ Gelu Popescu}{  Free Holomolomorphic functions on   polydomains  }

      \title [ Free Holomolomorphic functions on   polydomains   ]
      { Free Holomolomorphic functions on   polydomains }
        \author{Gelu Popescu}
     \date{\today}
\date{April 19, 2017}
      \thanks{Research supported in part by  NSF grant DMS 1500922}
      \subjclass[2000]{Primary:  47A56; 46L52;   Secondary: 46T25.}
      \keywords{Noncommutative polydomains;    Free holomorphic functions;  Berezin transform;  Fock space; Weighted creation operators.
}

      \address{Department of Mathematics, The University of Texas
      at San Antonio \\ San Antonio, TX 78249, USA}
      \email{\tt gelu.popescu@utsa.edu}

\begin{abstract} In this paper, we continue to develop the theory of free holomorphic functions on noncommutative regular polydomains. We find analogues of several classical results from complex analysis such as Abel theorem, Hadamard formula, Cauchy inequality, and Liouville theorem for entire functions, in our multivariable setting. We also provide a maximum principle and a Schwarz type lemma. These results are used to prove analogues of Weierstrass, Montel, and Vitali theorems for the algebra of free holomorphic functions on the regular polydomain, which turns out to be a complete metric space.
\end{abstract}

      \maketitle

\section*{Introduction}

 We developed  in  \cite{Po-von}, \cite{Po-disc}, \cite{Po-holomorphic}, \cite{Po-hyperbolic}, \cite{Po-hyperbolic2}, \cite{Po-holomorphic2}, and \cite{Po-automorphism}  a theory of free holomorphic functions on the  open unit ball
 $$
[B(\cH)^k]_1:=\left\{(X_1,\ldots, X_k)\in B(\cH)^k:\ \|X_1
X_1^*+\cdots +X_kX_k^* \|^{1/2} <1\right\},
$$
where  $k\in \NN:=\{1,2,\ldots\}$ and  $B(\cH)$ is the algebra
  of all bounded linear operators on a Hilbert space $\cH$.
Several classical results from complex analysis have analogues in this noncommutative multivariable setting. This theory was extended to noncommutative regular polyballs  ${\bf B_n}$ in a series of papers, starting with \cite{Po-poisson}, which led to our work on the curvature invariant \cite{Po-curvature-polyball}, the Euler characteristic \cite{Po-Euler-charact}, and  the  group of  free holomorphic automorphisms  on  ${\bf B_n}$ \cite{Po-automorphisms-polyball}. The regular  polyball ${\bf B_n}$, ${\bf n}=(n_1,\ldots, n_k)\in \NN^k$,  is a noncommutative analogue of the scalar polyball \, $(\CC^{n_1})_1\times\cdots \times  (\CC^{n_k})_1$   and  has a universal model $ {\bf S}:=\{{\bf S}_{i,j}\}$ consisting of   left creation
operators acting on the  tensor product $F^2(H_{n_1})\otimes \cdots \otimes F^2(H_{n_k})$ of full Fock spaces.

The goal of the present paper is to continue the development of the theory of free holomorphic functions on regular polydomains ${\bf D_q}$, which was initiated in
\cite{Po-domains}, \cite{Po-classification}, \cite{Po-Berezin3}, and \cite{Po-Berezin-poly}. This theory is related, via noncommutative Berezin transforms, to the study of operator algebras generated by the universal models associated with the regular polydomains, as well as to  the theory of functions in several complex variable (\cite{Kr}, \cite{Ru1}, \cite{Ru2}).
 We mention that the regular polydomain ${\bf D_q}$, ${\bf q}:=(q_1,\ldots, q_k)$, is a noncommutative analogue of the scalar polydomain
 $$\cD_{q_1}(\CC)\times \cdots \times \cD_{q_k}(\CC),
 $$
 where $\cD_{q_i}(\CC)\subset \CC^{n_i}$ is a domain generated by a positive regular polynomial $q_i\in \CC[Z_1,\ldots, Z_{n_i}]$ (see Section 1).
 We remark that, in general, one can  view the free holomorphic functions on noncommutative polydomains  as  noncommutative functions in the sense of \cite{KV}. Our approach is quite different and relies on  the universal models associated with the regular polydomains.

After a few preliminaries on  Berezin transforms on regular polydomains, we show, in Section 2, that the regular polydomain ${\bf D_q}$  is a noncommutative complete Reinhardt domain. We obtain characterizations for free holomorphic functions on regular polydomains and provide analogues of several results from complex analysis such as: Abel theorem, Hadamard formula, Cauchy inequality, and Liouville theorem for entire functions.

In Section 3, we prove a maximum principle for free holomorphic functions on regular polydomains and obtain a Schwarz lemma in this setting. In Section 4, we provide analogues of the classical results of Weierstrass, Montel, and Vitali (see \cite{Co}). These results are used to show that the algebra $Hol({\bf D_q})$ of all free holomorphic functions with complex coefficients on the polydomain ${\bf D_q}$ is a complete metric space with respect to an appropriate metric. 

We mention that  most of the results of this paper, including Weierstrass, Montel, and Vitali type theorems,  remain true for the radial part of the  polydomains ${\bf D_f^m}$, studied in \cite{Po-Berezin-poly}, and the proofs are essentially  the same.
We also remark that the results of this  paper   play an important role in our   project, in preparation,  concerning the free biholomorphic classification of polydomains and the associated polydomain algebras.

\bigskip

\section{Preliminaries on Berezin transforms on noncommutative polydomains}

A polynomial $q\in \CC\left<Z_1,\ldots, Z_n\right>$ in $n$ noncommuting indeterminates   is called {\it positive regular}  if all its coefficients are positive, the constant term is zero, and the coefficients of the linear terms $Z_1,\ldots, Z_n$ are different from zero. Let $\FF_{n}^+$ be the unital free semigroup on $n$ generators
$g_{1},\ldots, g_{n}$ and the identity $g_{0}$. We denote   $Z_{\alpha}:=Z_{j_1}\cdots Z_{j_p}$
  if   $\alpha=g_{j_1}\cdots g_{j_p}\in \FF_{n}^+$, and $Z_{g_0}=1$.

 If $A:=(A_1,\ldots, A_n)\in B(\cH)^n$ and $q=\sum_{\alpha} a_\alpha Z_\alpha$, $a_\alpha\in \CC$,  we define the map
$\Phi_{q,A}:B(\cH)\to B(\cH)$ by setting $\Phi_{q,A}(Y):=\sum_{\alpha} a_\alpha A_\alpha Y A_\alpha ^*$ for $Y\in B(\cH)$.
Given   ${\bf n}:=(n_1,\ldots, n_k)$  with $n_i\in  \NN:=\{1,2,\ldots\}$,  and a $k$-tuple ${\bf q}:=(q_1,\ldots, q_k)$ of positive regular polynomials $q_i\in \CC\left<Z_1,\ldots, Z_{n_i}\right>$, we associate with each  $k$-tuple  ${\bf X}:=(X_1,\ldots, X_k)\in B(\cH)^{n_1}\times\cdots \times B(\cH)^{n_k}$ the {\it defect mapping} ${\bf \Delta_{q,X}}:B(\cH)\to  B(\cH)$ defined by
$$
{\bf \Delta_{q,X}}:=\left(id -\Phi_{q_1, X_1}\right)\circ \cdots \circ\left(id -\Phi_{q_k, X_k}\right).
$$
We denote by $B(\cH)^{n_1}\times_c\cdots \times_c B(\cH)^{n_k}$
   the set of all tuples  ${\bf X}:=({ X}_1,\ldots, { X}_k)\in B(\cH)^{n_1}\times\cdots \times B(\cH)^{n_k}$, where ${ X}_i:=(X_{i,1},\ldots, X_{i,n_i})\in B(\cH)^{n_i}$, $i\in \{1,\ldots, k\}$,
     with the property that, for any $s,t\in \{1,\ldots, k\}$, $s\neq t$, the entries of ${ X}_s$ are commuting with the entries of ${ X}_t$. In this case we say that ${ X}_s$ and ${ X}_t$ are commuting tuples of operators. Note that the operators $X_{i,1},\ldots, X_{i,n_i}$ are not necessarily commuting.
Consider the {\it noncommutative regular polydomain}
$$
{\bf D_q^-}(\cH):=\left\{ {\bf X} \in B(\cH)^{n_1}\times_c\cdots \times_c B(\cH)^{n_k}: \ {\bf \Delta_{q,X}^p}(I)\geq 0 \ \text{ for any  }\   {\bf p}:=(p_1,\ldots, p_k)\in \{0,1\}^k\right\},
$$
where
 the {\it defect mapping} ${\bf \Delta_{X}^p}:B(\cH)\to  B(\cH)$ is defined by
$$
{\bf \Delta_{q, X}^p}:=\left(id -\Phi_{q_1,X_1}\right)^{p_1}\circ \cdots \circ\left(id -\Phi_{q_k X_k}\right)^{p_k}, \qquad  {\bf p}:=(p_1,\ldots, p_k)\in \ZZ_+^k,
$$
and we use the convention that $(id-\Phi_{q_i,X_i})^0=id$, the identity mapping on $B(\cH)$. The {\it abstract noncommutative polydomain} ${\bf D_q^-}$  is the disjoint union $\coprod_{\cH} {\bf D_q^-}(\cH)$, over all Hilbert spaces $\cH$.

 For each $i\in \{1,\ldots, k\}$, let $H_{n_i}$ be
an $n_i$-dimensional complex  Hilbert space with orthonormal basis
$e_1^i,\dots,e_{n_i}^i$.
  We consider the full Fock space  of $H_{n_i}$ defined by
$$F^2(H_{n_i}):=\bigoplus_{p\geq 0} H_{n_i}^{\otimes p},$$
where $H_{n_i}^{\otimes 0}:=\CC 1$ and $H_{n_i}^{\otimes p}$ is the
(Hilbert) tensor product of $p$ copies of $H_{n_i}$. Let $\FF_{n_i}^+$ be the unital free semigroup on $n_i$ generators
$g_{1}^i,\ldots, g_{n_i}^i$ and the identity $g_{0}^i$.
The length of $\alpha\in
\FF_{n_i}^+$ is defined by $|\alpha|:=0$ if $\alpha=g_0^i$  and
$|\alpha|:=p$ if
 $\alpha=g_{j_1}^i\cdots g_{j_p}^i$, where $j_1,\ldots, j_p\in \{1,\ldots, n_i\}$.
Set $e_\alpha^i :=
e^i_{j_1}\otimes \cdots \otimes e^i_{j_p}$ if
$\alpha=g^i_{j_1}\cdots g^i_{j_p}\in \FF_{n_i}^+$
 and $e^i_{g^i_0}:= 1\in \CC$.
It is  clear that $\{e^i_\alpha:\alpha\in\FF_{n_i}^+\}$ is an orthonormal
basis of $F^2(H_{n_i})$.

For each  $j\in\{1,\ldots,n_i\}$, we  define
 the {\it weighted left creation  operators} $W_{i,j}:F^2(H_{n_i})\to
F^2(H_{n_i})$,   associated with the abstract noncommutative
polydomain  ${\bf D}^-_{q_i} $    by setting
\begin{equation*}
W_{i,j} e_\alpha^i:=\frac {\sqrt{b_{i,\alpha} }}{\sqrt{b_{i,g_j
\alpha} }} e^i_{g_j \alpha}, \qquad \alpha\in \FF_{n_i}^+,
\end{equation*}
where
\begin{equation}\label{b-coef}
  b_{i,g_0^i} :=1\quad \text{ and } \quad
 b_{i,\alpha}:= \sum_{p=1}^{|\alpha|}
\sum_{{\gamma_1,\ldots,\gamma_p\in \FF_{n_i}^+}\atop{{\gamma_1\cdots \gamma_p=\alpha }\atop {|\gamma_1|\geq
1,\ldots, |\gamma_p|\geq 1}}} a_{i,\gamma_1}\cdots a_{i,\gamma_p}
\end{equation}
for all  $ \alpha\in \FF_{n_i}^+$ with  $|\alpha|\geq 1$. Due to the definition of $W_{i,j}$,  for each   $\beta_i\in \FF_{n_i}^+$, we have
\begin{equation*}
  W_{i,\beta_i}W_{i,\beta_i}^*
e^i_{\alpha_i} =\begin{cases} \frac
{ {b_{i,\gamma_i} }}{ {b_{i,\alpha_i} }}\,e^i_{\alpha_i}& \text{ if
}
\alpha_i=\beta_i\gamma_i, \ \, \gamma_i\in \FF_{n_i}^+ \\
0& \text{ otherwise. }
\end{cases}
\end{equation*}
Now, we
define the operator ${\bf W}_{i,j}$ acting on the tensor Hilbert space
$F^2(H_{n_1})\otimes\cdots\otimes F^2(H_{n_k})$ by setting
$${\bf W}_{i,j}:=\underbrace{I\otimes\cdots\otimes I}_{\text{${i-1}$
times}}\otimes W_{i,j}\otimes \underbrace{I\otimes\cdots\otimes
I}_{\text{${k-i}$ times}}.
$$
 We recall  (see \cite{Po-Berezin-poly}) that ,
 if  ${\bf W}_i:=({\bf W}_{i,1},\ldots,{\bf W}_{i,n_i})$, then

  $$(id-\Phi_{q_1,{\bf W}_1})\circ \cdots \circ(id-\Phi_{q_k,{\bf W}_k}) (I)={\bf P}_\CC,
   $$
   where ${\bf P}_\CC$ is the
 orthogonal projection from $\otimes_{i=1}^k F^2(H_{n_i})$ onto $\CC 1\subset \otimes_{i=1}^k F^2(H_{n_i})$, where $\CC 1$ is identified with $\CC 1\otimes\cdots \otimes \CC 1$.
     Moreover, ${\bf W}:=({\bf W}_1,\ldots, {\bf W}_k)$ is  a pure $k$-tuple  in the
noncommutative polydomain $ {\bf D_q^-}(\otimes_{i=1}^kF^2(H_{n_i}))$ and
 is called the {\it universal model} associated
  with the abstract noncommutative
  polydomain ${\bf D_q^-}$. We recall the definition of the {\it noncommutative Berezin kernel} associated with any element ${\bf X}:=({ X}_1,\ldots, { X}_k)\in {\bf D_q^-}(\cH)$ with $X_i:=(X_{i,1},\ldots, X_{i,n_i})$ as the operator
   $${\bf K_{q,X}}: \cH \to F^2(H_{n_1})\otimes \cdots \otimes  F^2(H_{n_k}) \otimes  \overline{{\bf \Delta_{q,X}}(I) (\cH)}$$
   defined by
   $$
   {\bf K_{q,X}}h:=\sum_{\beta_i\in \FF_{n_i}^+, i=1,\ldots,k}
   \sqrt{b_{1,\beta_1} }\cdots \sqrt{b_{k,\beta_k} }\,
   e^1_{\beta_1}\otimes \cdots \otimes  e^k_{\beta_k}\otimes {\bf \Delta_{q,X} }(I)^{1/2} X_{1,\beta_1}^*\cdots X_{k,\beta_k}^*h,
   $$
for $h\in \cH$, where the defect operator is defined by
$$
{\bf \Delta_{q,X} }(I)  :=(id-\Phi_{q_1,X_1})\circ\cdots \circ(id-\Phi_{q_k,X_k}) (I),
$$
and the coefficients  $b_{1,\beta_1}, \ldots, b_{k,\beta_k}$
are given above. Here, we  use the notation $X_{i,\alpha_i}:=X_{i,j_1}\cdots X_{i,j_p}$
  if  $\alpha_i=g_{j_1}^i\cdots g_{j_p}^i\in \FF_{n_i}^+$ and
   $X_{i,g_0^i}:=I$.
The noncommutative Berezin kernel associated with a $k$-tuple
${\bf X}:=({ X}_1,\ldots, { X}_k)$ in the noncommutative polydomain ${\bf D_q^-}(\cH)$ has the following properties.
\begin{enumerate}
\item[(i)] ${\bf K_{q,X}}$ is a contraction  and
$$
{\bf K_{q,X}^*}{\bf K_{q,X}}=
\lim_{m_k\to\infty}\ldots \lim_{m_1\to\infty}  (id-\Phi_{q_k,X_k}^{m_k})\circ\cdots \circ(id-\Phi_{q_1,X_1}^{m_1})(I),
$$
where the limits are in the weak  operator topology.
\item[(ii)]  If ${\bf X}$ is pure, i.e., for each $i\in \{1,\ldots, k\}$,   $ \lim_{m_i\to\infty}\Phi_{q_i,X_i}^{m_i}(I)=0$ in the weak operator topology,   then
$${\bf K_{q,X}^*}{\bf K_{q,X}}=I_\cH. $$
\item[(iii)]  For any $i\in \{1,\ldots, k\}$ and $j\in \{1,\ldots, n_i\}$,  $${\bf K_{q,X}} { X}^*_{i,j}= ({\bf W}_{i,j}^*\otimes I)  {\bf K_{q,X}}.
    $$
\end{enumerate}

Now, we introduce a class of  noncommutative Berezin  transforms associated  with regular polydomains.
If $A$ is a positive invertible operator, we write $A>0$.
Define the open polydomain ${\bf D_q}:=\coprod_{\cH} {\bf D_q}(\cH)$, where
$$
{\bf D_q}(\cH):=\left\{ {\bf X} \in B(\cH)^{n_1}\times_c\cdots \times_c B(\cH)^{n_k}: \ {\bf \Delta_{q,X}}(I)>0 \right\}.
$$
The {\it Berezin transform at} ${\bf X}\in {\bf D_q}(\cH)$,
 is the map $ \boldsymbol{\cB_{\bf X}}: B(\otimes_{i=1}^k F^2(H_{n_i}))\to B(\cH)$
 defined by
\begin{equation*}
 {\boldsymbol\cB_{\bf X}}[g]:= {\bf K^*_{q,\bf X}} (g\otimes I_\cH) {\bf K_{q,\bf X}},
 \qquad g\in B(\otimes_{i=1}^k F^2(H_{n_i})).
 \end{equation*}
 Let $\cP({\bf W})$  be the set of all polynomials $p({\bf W}_{i,j})$  in  the operators ${\bf W}_{i,j}$, $i\in \{1,\ldots, k\}$, $j\in \{1,\ldots, n_i\}$,  and the identity.
  If $g$ is in the operator space
  $$\cS:=\overline{\text{\rm  span}} \{ p({\bf W}_{i,j})q({\bf W}_{i,j})^*:\
p({\bf W}_{i,j}),q({\bf W}_{i,j}) \in  \cP({\bf W})\},
$$
where the closure is in the operator norm, we  define the Berezin transform at  ${\bf X}\in {\bf D_q^-}(\cH)$, by
  $${\boldsymbol\cB_{\bf X}}[g]:=\lim_{r\to 1} {\bf K}^*_{{\bf q},r{\bf X}} (g\otimes I_\cH) {\bf K}_{{\bf q},{r\bf X}},
 \qquad g\in  \cS,
 $$
 where the limit is in the operator norm topology.
In this case, the Berezin transform at ${\bf X}$ is a unital  completely positive linear  map such that
 $${\boldsymbol\cB_{\bf X}}({\bf W}_{\boldsymbol\alpha} {\bf W}_{\boldsymbol\beta}^*)={\bf X}_{\boldsymbol\alpha} {\bf X}_{\boldsymbol\beta}^*, \qquad \boldsymbol\alpha, \boldsymbol\beta \in \FF_{n_1}^+\times \cdots \times\FF_{n_k}^+,
 $$
 where  ${\bf W}_{\boldsymbol\alpha}:= {\bf W}_{1,\alpha_1}\cdots {\bf W}_{k,\alpha_k}$ if  $\boldsymbol\alpha:=(\alpha_1,\ldots, \alpha_k)\in \FF_{n_1}^+\times \cdots \times\FF_{n_k}^+$. The {\it polydomain algebra} $\boldsymbol\cA({\bf D_q})$ is the norm closed algebra generated by all  ${\bf W}_{i,j}$ and the identity, while the noncommutative Hardy algebra $F^\infty({\bf D_q})$ is the weakly closed algebra generated by ${\bf W}_{i,j}$ and the identity.
 The restriction of
noncommutative Berezin transform  ${\boldsymbol\cB_{\bf X}}$ to the polydomain algebra $\boldsymbol\cA({\bf D_q})$ is a
completely contractive homomorphism. If, in addition,
 ${\bf X}$ is a  pure $k$-tuple,
then
$$\lim_{r\to 1} {\bf B}_{r{\bf X}}[g]= {\bf B}_{\bf X}[g],\qquad g\in \cS.$$
The  Berezin transform will play an important role in this paper.
   More properties  concerning  noncommutative Berezin transforms and multivariable operator theory on noncommutative balls and  polydomains can be found in \cite{Po-poisson}, \cite{Po-holomorphic}, \cite{Po-holomorphic2}, \cite{Po-automorphism},    \cite{Po-Berezin3}, and \cite{Po-Berezin-poly}.
For basic results on completely positive (resp. bounded)  maps  we refer the reader to \cite{Pa-book} and \cite{Pi-book}.

\bigskip

\section{ Free holomorphic functions  on noncommutative   polydomains}

In this section, we show that the regular polydomain ${\bf D_q}$ is a noncommutative complete Reinhardt domain. We study free holomorphic functions on regular polydomains and provide analogues of several classical results from complex analysis such as: Abel theorem, Hadamard formula, Cauchy inequality,  and  Liouville theorem for entire functions.

 A subset $G$  of $ B(\cH)^{n_1}\times\cdots \times B(\cH)^{n_k}$
    is called {\it complete Reinhardt set}  if \, ${\bf zX}\in G$ for any ${\bf X}\in G$ and  ${\bf z}\in  \overline{\DD}^{n_1+\cdots+n_k}$, where ${\bf zX}:=\{z_{i,j} X_{i,j}\}$ if ${\bf X}:=\{X_{i,j}\}$ and ${\bf z}=\{z_{i,j}\}$ for $i\in \{1,\ldots, k\}$ and $j\in \{1,\ldots, n_i\}$.

\begin{proposition}\label{reg-poly} The  following statements hold:
\begin{enumerate}
\item [(i)]  The regular polydomain  ${\bf D_q}(\cH)$ is relatively  open in   $ B(\cH)^{n_1}\times_c\cdots \times_c B(\cH)^{n_k}$, and its closure ${\bf D_q}(\cH)^-$, in the operator norm topology,  coincides with ${\bf D_q^-}(\cH)$.
\item [(ii)] ${\bf D_q}(\cH)$is a complete Reinhardt domain such that
$$
{\bf D_q}(\cH)=\bigcup_{{\bf z}\in  \overline{\DD}^{n_1+\cdots+n_k}}{\bf z}{\bf D_q}(\cH)=\bigcup_{{\bf z}\in  {\DD}^{n_1+\cdots+n_k}}{\bf z}{\bf D_q}(\cH)^-=\bigcup_{{\bf z}\in  {\DD}^{n_1+\cdots+n_k}}{\bf z}{\bf D_q}(\cH).
$$
and $$ {\bf D_q}(\cH)=\bigcup_{0\leq r<1}r{\bf D_q}(\cH)=\bigcup_{0\leq r<1}r{\bf D_q}(\cH)^-.
$$
\item [(iii)] ${\bf D_q}(\cH)^-$ is a complete Reinhardt  set and
$$
{\bf D_q}(\cH)^-=\bigcup_{{\bf z}\in  \overline{\DD}^{n_1+\cdots+n_k}}{\bf z}{\bf D_q}(\cH)^- = \bigcup_{0\leq r\leq 1}r{\bf D_q}(\cH)^-.
$$
\end{enumerate}
\end{proposition}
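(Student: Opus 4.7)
The plan is to derive all three parts from the operator-norm continuity of the polynomial defect maps ${\bf X}\mapsto{\bf \Delta_{q,X}^{\bf p}}(I)$ and from a monotonicity estimate under ${\bf z}$-scaling. Openness in (i) is immediate: each ${\bf \Delta_{q,X}^{\bf p}}(I)$ is a self-adjoint operator depending polynomially on ${\bf X}$ and ${\bf X}^*$, hence norm-continuous in ${\bf X}$, and positive invertible operators form an open subset of the self-adjoint part of $B(\cH)$, so ${\bf D_q}(\cH)$ is relatively open. Dually, each condition ${\bf \Delta_{q,X}^{\bf p}}(I)\geq 0$ is preserved under norm limits, so ${\bf D_q^-}(\cH)$ is closed in $B(\cH)^{n_1}\times_c\cdots\times_c B(\cH)^{n_k}$ and ${\bf D_q}(\cH)^-\subseteq{\bf D_q^-}(\cH)$.

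The core estimate rests on the positive regularity of each $q_i$. For $Y\geq 0$ and ${\bf z}=(z_{i,j})\in\overline{\DD}^{n_1+\cdots+n_k}$,
$$
\Phi_{q_i,z_i X_i}(Y)=\sum_{\alpha\in\FF_{n_i}^+}a_{i,\alpha}|z_{i,\alpha}|^2 X_{i,\alpha}YX_{i,\alpha}^*\leq\Phi_{q_i,X_i}(Y),
$$
with the sharper bound $\Phi_{q_i,rX_i}(Y)\leq r^2\Phi_{q_i,X_i}(Y)$ for $z_{i,j}=r\in[0,1]$, since the vanishing constant term of $q_i$ forces $|\alpha|\geq 1$ in each summand. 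The commutativity of the tuples $X_s,X_t$ for $s\neq t$ yields both $\Phi_{q_s,X_s}\Phi_{q_t,X_t}=\Phi_{q_t,X_t}\Phi_{q_s,X_s}$ and the cross-commutation $\Phi_{q_i,X_i}(X_{j,\alpha}ZX_{j,\alpha}^*)=X_{j,\alpha}\Phi_{q_i,X_i}(Z)X_{j,\alpha}^*$ for $i\neq j$. Using these ingredients, I would induct on $k$, peeling off the outermost factor $(id-\Phi_{q_1,rX_1})$ and writing the inner operator ${\bf \Delta_{q,rX}^{{\bf p}'}}(I)$ (with $p_1=0$) as $(1-r^2)^{|{\bf p}'|}I$ plus a manifestly positive remainder consisting of cross-commutator terms $X_{j,\alpha}(\cdots)X_{j,\alpha}^*$; applying $(id-\Phi_{q_1,rX_1})$ summand-by-summand and invoking the cross-commutation yields
$$
{\bf \Delta_{q,rX}^{\bf p}}(I)\geq(1-r^2)^{|{\bf p}|}I,\qquad 0\leq r<1,\ {\bf p}\in\{0,1\}^k,\ {\bf X}\in{\bf D_q^-}(\cH),
$$
together with ${\bf \Delta_{q,zX}^{\bf p}}(I)\geq{\bf \Delta_{q,X}^{\bf p}}(I)$ when $|z_{i,j}|\leq 1$. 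Consequently ${\bf zX}\in{\bf D_q^-}(\cH)$ for every such ${\bf z}$, and $r{\bf X}\in{\bf D_q}(\cH)$ whenever ${\bf X}\in{\bf D_q^-}(\cH)$ and $r\in[0,1)$.

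With these in hand, the proof finishes mechanically. For (i), every ${\bf X}\in{\bf D_q^-}(\cH)$ is the norm limit of $r{\bf X}\in{\bf D_q}(\cH)$ as $r\uparrow 1$, giving ${\bf D_q^-}(\cH)\subseteq{\bf D_q}(\cH)^-$, which combined with the reverse inclusion completes (i). For (ii), the equality ${\bf D_q}(\cH)=\bigcup_{{\bf z}\in\overline{\DD}^{n_1+\cdots+n_k}}{\bf z}\,{\bf D_q}(\cH)$ combines Reinhardt monotonicity ($\supseteq$) with the trivial choice ${\bf z}={\bf 1}$ ($\subseteq$); the middle equality and the radial identity use the strict-positivity version applied to ${\bf X}\in{\bf D_q^-}(\cH)={\bf D_q}(\cH)^-$. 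Statement (iii) is proved identically with ``$\geq 0$'' replacing ``$>0$''. The main obstacle is the inductive commutation step: one must show that the decomposition ``$(1-r^2)^{|{\bf p}'|}I$ plus positive remainder'' is preserved after applying $(id-\Phi_{q_1,rX_1})$, which reduces to the cross-commutation identity; this relies on the commutativity of the tuples $X_1$ and $X_j$ extending to their adjoints, and careful bookkeeping of the decomposition across all $2^k$ partial defects is the most delicate piece.
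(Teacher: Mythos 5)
Your proposal reaches the right conclusions and its overall architecture is sound, but it takes a genuinely different route from the paper, and the route you choose hides its one delicate point. The paper never runs a direct induction on arbitrary tuples: it proves the two key inequalities --- ${\bf \Delta^{\bf p}_{q,\lambda{\bf W}}}(I)\geq\prod_{i}(1-|\lambda_i|^2)^{p_i}I$ and ${\bf \Delta^{\bf p}_{q,{\bf z}{\bf W}}}(I)\geq{\bf \Delta^{\bf p}_{q,{\bf W}}}(I)$ --- only for the universal model ${\bf W}$, where the entries of ${\bf W}_s$ and ${\bf W}_t$ ($s\neq t$) are \emph{doubly} commuting, so the composition of defect maps collapses into a product of commuting positive operators and both estimates are one-liners; it then transfers these operator inequalities to an arbitrary ${\bf X}\in{\bf D_q}(\cH)$ (or ${\bf Y}\in{\bf D_q^-}(\cH)$) by applying the noncommutative Berezin transform ${\boldsymbol\cB}_{\bf X}$, which is unital completely positive and carries ${\bf W}_{\boldsymbol\alpha}{\bf W}_{\boldsymbol\alpha}^*$ to ${\bf X}_{\boldsymbol\alpha}{\bf X}_{\boldsymbol\alpha}^*$. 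Your substitute is a direct induction on $k$ via the cross-commutation $\Phi_{q_i,X_i}(X_{j,\alpha}ZX_{j,\alpha}^*)=X_{j,\alpha}\Phi_{q_i,X_i}(Z)X_{j,\alpha}^*$, which is legitimate (it needs only commutativity of the tuples and its adjoint, not double commutativity). It can be pushed through, but the remainder in your decomposition is not ``manifestly positive'': after peeling off $(id-\Phi_{q_1,rX_1})$ you must apply it both to conjugated terms $X_{j,\alpha}PX_{j,\alpha}^*$ and to bare partial defects $P={\bf \Delta^{{\bf p}'}_{q,X}}(I)$, and positivity of the results is exactly the full lattice of $2^k$ conditions ${\bf \Delta^{\bf p}_{q,X}}(I)\geq0$ defining ${\bf D_q^-}(\cH)$ --- already for $k=2$ one needs $(id-\Phi_{q_1,X_1})\circ(id-\Phi_{q_2,X_2})(I)\geq0$ to dominate the remainder. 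So the bookkeeping you defer is where the real content sits; the Berezin-transform argument buys you exactly that bookkeeping for free, while your version is more elementary and avoids invoking complete positivity of ${\boldsymbol\cB}_{\bf X}$. The openness argument, the limiting step giving ${\bf D_q}(\cH)^-={\bf D_q^-}(\cH)$, and the deduction of (ii) and (iii) from the scaling inequalities agree with the paper.
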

\begin{proof}
 Fix ${\bf X}=(X_1,\ldots, X_k)\in {\bf D_q}(\cH)$ and let  $c>0$  be such that ${\bf \Delta_{q,X}}(I)>cI$.  If  $d\in (0,c)$, then there is $\epsilon>0$  such that
$
-dI\leq {\bf \Delta_{q,Y}}(I)-{\bf \Delta_{q,X}}(I)\leq dI
$
for any ${\bf Y}=(Y_1,\ldots, Y_k)\in B(\cH)^{n_1}\times_c\cdots \times_c B(\cH)^{n_k}$ with $\max_{i\in \{1,\ldots, k\}}\|X_i-Y_i\|<\epsilon$.
Hence,
$$
{\bf \Delta_{q,Y}}(I)=({\bf \Delta_{q,Y}}(I)-{\bf \Delta_{q,X}}(I))+{\bf \Delta_{q,X}}(I)\geq (c-d)I>0,
$$
which proves that $Y\in {\bf D_q}(\cH)$. Consequently,  ${\bf D_q}(\cH)$ is relatively  open in   $ B(\cH)^{n_1}\times_c\cdots \times_c B(\cH)^{n_k}$ with respect to the product topology. Now,  we prove that ${\bf D_q}(\cH)^-={\bf D_q^-}(\cH)$.
First, we show that if  $\lambda_i\in \overline{\DD}$, $i\in \{1,\ldots, k\}$,    and ${\bf W}=({\bf W}_1,\ldots, {\bf W}_k)$ is the universal model for the regular polydomain ${\bf D_q^-}$,  then
\begin{equation}
\label{qq}
(id-\Phi_{q_1,\lambda_1 {\bf W}_1})^{p_1}\circ\cdots \circ (id-\Phi_{q_k\lambda_k {\bf W}_k})^{p_k}(I)\geq \prod_{i=1}^k (1-|\lambda_i|^2)^{p_i}I,\qquad p_i\in \{0,1\}.
\end{equation}
We recall that two operators $A,B\in B(\cH)$ are called doubly commuting if $AB=BA$ and $AB^*=B^*A$. Since the entries of ${\bf W}_s=({\bf W}_{s,1},\ldots, {\bf W}_{s,n_s})$ are doubly commuting with the entries of ${\bf W}_t=({\bf W}_{t,1},\ldots, {\bf W}_{t,n_t})$, whenever $s,t\in\{1,\ldots, k\}$, $s\neq t$, we have
$$(id-\Phi_{q_1,\lambda_1 {\bf W}_1})^{p_1}\circ\cdots \circ (id-\Phi_{q_k,\lambda_k {\bf W}_k})^{p_k}(I)=
\prod_{i=1}^k (I-\Phi_{q_i,\lambda_i{\bf W}_i}(I))^{p_i},\qquad p_i\in \{0,1\}.
 $$
Since
 $I-\Phi_{q_i,\lambda_i{\bf W}_i}(I)\geq (1-|\lambda_i|^2)I$, the  inequality \eqref{qq} follows.
Taking into account that ${\bf D_q}(\cH)$ is open in the operator norm topology, it is clear that if  ${\bf X}\in {\bf D_q}(\cH)$, then there is $r\in [0,1)$ such that $\frac{1}{r}{\bf X}\in {\bf D_q}(\cH)$. Applying the Berezin transform at $\frac{1}{r}{\bf X}$ to the  inequality of \eqref{qq}, when $\lambda_i=r$ for any $i\in \{1,\ldots, k\}$, we deduce that
\begin{equation*}
{\bf \Delta_{q,X}^p}(I)=(id-\Phi_{q_1,X_1})^{p_1}\circ\cdots \circ (id-\Phi_{ q_k, {X}_k})^{p_k}(I)\geq \prod_{i=1}^k (1-r^2)^{p_i}I,\qquad p_i\in \{0,1\}.
\end{equation*}
 Consequently, if ${\bf Y}\in {\bf D_q}(\cH)^-$,  a limiting process implies that
 ${\bf \Delta_{q,Y}^p}(I)\geq 0$ for any ${\bf p}=(p_1,\ldots, p_k)$ with  $ p_i\in \{0,1\}$.
 Therefore, ${\bf D_q}(\cH)^-\subseteq{\bf D_q^-}(\cH)$. To prove the reverse inequality, let
${\bf Y}=(Y_1,\ldots, Y_k)\in {\bf D_q^-}(\cH)$. For any $r\in [0,1)$, we have
$$
\Phi_{q_i,r{\bf W}_i}(I)=\sum_{\alpha\in \FF_{n_i}^+, |\alpha|\geq 1} a_\alpha r^{2|\alpha|}{\bf W}_{i,\alpha}{\bf W}_{i,\alpha}^*\leq r^2\Phi_{q_i,{\bf W}_i}(I)\leq r^2I.
$$
Hence, $\prod_{i=1}^k(I-\Phi_{q_i,rW_i}(I))\geq (1-r^2)^kI$.
  Applying the Berezin transform at ${\bf Y}$ and using the fact that ${\boldsymbol\cB_{\bf X}}$ is a completely positive linear map, we deduce that  ${\bf \Delta}_{q,r{\bf Y}}(I)\geq (1-r^2)^kI$, which shows that $r{\bf Y}\in {\bf D_q}(\cH)$. Since $ r{\bf Y}\to {\bf Y}$, as $r\to 1$, we conclude that ${\bf D_q^-}(\cH)\subseteq  {\bf D_q}(\cH)^-$, which completes the proof of  item (i).

 Now, we prove item (ii). Using the inequality $I-\Phi_{q_i,{\bf z}_i{\bf W}_i}(I)\geq I-\Phi_{q_i,{\bf W}_i}(I)\geq 0$ and the fact that $I-\Phi_{q_i,{\bf W}_i}(I)$ commutes with $I-\Phi_{q_s,{\bf W}_s}(I)$, one can deduce that  if    ${\bf z}=({\bf z}_1,\ldots, {\bf z}_k)$,  where ${\bf z}_i=(z_{i,1},\ldots, z_{i,n_i})\in \overline{\DD}^{n_i}$, then
$$
(id-\Phi_{q_1,{\bf z}_1 {\bf W}_1})^{p_1}\circ\cdots \circ (id-\Phi_{q_k,{\bf z}_k {\bf W}_k})^{p_k}(I)
\geq
(id-\Phi_{q_1.{\bf W}_1})^{p_1}\circ\cdots \circ (id-\Phi_{q_k,{\bf W}_k})^{p_k}(I),\qquad p_i\in \{0,1\}.
$$
If   ${\bf X}\in {\bf D_q}(\cH)$, then applying the Berezin transform at ${\bf X}$ to the inequality above,  we obtain
$
{\bf \Delta_{q, zX}^p}(I)\geq {\bf \Delta_{q,X}^p}(I)>0
$
for any ${\bf p}=(p_1,\ldots, p_k)$ with $ p_i\in \{0,1\}$. This implies
$${\bf z}{\bf D_q}(\cH)\subseteq {\bf D_q}(\cH), \qquad {\bf z}\in  \overline{\DD}^{n_1+\cdots+n_k},
$$
which shows that ${\bf D_q}(\cH)$ is a complete Reinhardt domain and    ${\bf D_q}(\cH)=\bigcup_{{\bf z}\in  \overline{\DD}^{n_1+\cdots+n_k}}{\bf z}{\bf D_q}(\cH)$.

 Now, fix ${\bf X}\in {\bf D_q}(\cH)^-$ and ${\bf z}\in  {\DD}^{n_1+\cdots+n_k}$. Then there is $r\in(0,1)$ such that $\frac{1}{r}{\bf z}\in \DD^{n_1+\cdots +n_k}$.
Applying the Berezin transform at ${\bf X}$ to the  inequality \eqref{qq} when $\lambda_1=\cdots =\lambda_k=r$,  one can see  that $r{\bf X}\in {\bf D_q}(\cH)$.
 Therefore, ${\bf zX}\in\frac{1}{r}{\bf z} {\bf D_q}(\cH)\in {\bf D_q}(\cH)$, which shows that
\begin{equation}
\label{subset1}
{\bf z}{\bf D_q}(\cH)^-\subseteq {\bf D_q}(\cH), \qquad {\bf z}\in  {\DD}^{n_1+\cdots+n_k}.
\end{equation}

Since ${\bf D_q}(\cH)$ is an open set , for any ${\bf X}\in {\bf D_q}(\cH)$, there is $r\in (0,1)$ such that ${\bf X}\in r{\bf D_q}(\cH))$. Consequently,
\begin{equation}
\label{subset2}
{\bf D_q}(\cH)\subset \bigcup_{0\leq r<1}r{\bf D_q}(\cH)\subset
\bigcup_{{\bf z}\in  {\DD}^{n_1+\cdots+n_k}}{\bf z}{\bf D_q}(\cH)\subseteq\bigcup_{{\bf z}\in  {\DD}^{n_1+\cdots+n_k}}{\bf z}{\bf D_q}(\cH)^-
\end{equation}
and
\begin{equation}
\label{subset3}
{\bf D_q}(\cH)\subset \bigcup_{0\leq r<1}r{\bf D_q}(\cH)\subset
\bigcup_{0\leq r<1}r{\bf D_q}(\cH)^-.
 \end{equation}
Using  relations \eqref{subset1} and \eqref{subset2}, one can see  that the first sequence of equalities in item (ii) holds.
 Due to relation \eqref{subset1}, for each $r\in [0,1)$, we have
$ r{\bf D_q}(\cH)^-\subseteq {\bf D_q}(\cH)$ which together with  relation \eqref{subset3} show that the second sequence of equalities in item (ii) holds.   Item (iii) follows easily  from item (ii).
The proof is complete.
\end{proof}

We remark that if  ${\bf r }:=(r_1,\ldots, r_k)$, $r_i>0$, then we also have
 $ {\bf D_q}(\cH)=\bigcup_{0\leq r_i<1}{\bf r}{\bf D_q}(\cH)^-.
$

For each $i\in\{1,\ldots, k\}$, let $Z_i:=(Z_{i,1},\ldots, Z_{i,n_i})$ be
an  $n_i$-tuple of noncommuting indeterminates and assume that, for any
$p,q\in \{1,\ldots, k\}$, $p\neq q$, the entries in $Z_p$ are commuting
 with the entries in $Z_q$. We set $Z_{i,\alpha_i}:=Z_{i,j_1}\cdots Z_{i,j_p}$
  if $\alpha_i\in \FF_{n_i}^+$ and $\alpha_i=g_{j_1}^i\cdots g_{j_p}^i$, and
   $Z_{i,g_0^i}:=1$, where $g_0^i$ is the identity in $\FF_{n^i}^+$.
   If  $\boldsymbol\alpha:=(\alpha_1,\ldots, \alpha_k)\in \FF_{n_1}^+\times \cdots \times\FF_{n_k}^+$, we denote ${\bf Z}_{\boldsymbol\alpha}:= {Z}_{1,\alpha_1}\cdots {Z}_{k,\alpha_k}$.
Let $\ZZ$ be the set of all integers and $\ZZ_+$ be the set of all nonnegative integers.

 If $T_1,\ldots, T_n \in B(\cH)$, we use the notation $[T_1,\ldots, T_n]$ to denote either the $n$-tuple $(T_1,\ldots, T_n)\in B(\cH)^n$ or the row operator $[T_1\, \cdots \, T_n]$ acting from the direct sum $\cH^{(n)}:=\cH\oplus \cdots \oplus \cH$ to $\cH$. We also set
$\Lambda_{\bf p}:=\{\boldsymbol\alpha:=(\alpha_1,\ldots, \alpha_k)\in \FF_{n_1}^+\times\cdots \times \FF_{n_k}^+: \ |\alpha_i|=p_i \}$, where ${\bf p}:=(p_1,\ldots, p_k)\in \ZZ_+^k$.

\begin{lemma}\label{Le} Let ${\bf W}:=({\bf W}_1,\ldots, {\bf W}_k)$  be the  universal model  and  $\{b_{i,\alpha_i}\}$ be the coefficients associated
  with the abstract noncommutative
  polydomain ${\bf D_q^-}$.
   If $A_{(\boldsymbol\alpha)}$, $\boldsymbol\alpha\in  \Lambda_{\bf p}$, are bounded linear operators on a Hilbert space $\cK$, then
 $$\left\|\sum\limits_{\boldsymbol\alpha=(\alpha_1,\ldots, \alpha_k)\in  \Lambda_{\bf p}}{{b_{1,\alpha_1}\cdots b_{k,\alpha_k}}}{\bf W}_{\boldsymbol\alpha}{\bf W}_{\boldsymbol\alpha}^*\right\|=1
$$
and
\begin{equation*}
 \begin{split}
 \left\|\sum_{\boldsymbol\alpha=(\alpha_1,\ldots, \alpha_k)\in  \Lambda_{\bf p}} \frac{1}{{b_{1,\alpha_1}\cdots b_{k,\alpha_k}}}A_{(\boldsymbol\alpha)}^*A_{(\boldsymbol\alpha)}
\right\|^{1/2}&=\left\| \sum_{\boldsymbol\alpha\in  \Lambda_{\bf p}} A_{(\boldsymbol\alpha)} \otimes  {\bf W}_{\boldsymbol\alpha}\right\|=\left\|\sum\limits_{\boldsymbol\alpha\in  \Lambda_{\bf p}} A_{(\boldsymbol\alpha)}^*A_{(\boldsymbol\alpha)}\otimes {\bf W}_{\boldsymbol\alpha}^*{\bf W}_{\boldsymbol\alpha}\right\|^{1/2}.
 \end{split}
 \end{equation*}
 \end{lemma}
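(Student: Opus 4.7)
The plan is to treat the two displayed formulas independently, though both rest on the same combinatorial inequality: $b_{i,\alpha}b_{i,\delta}\le b_{i,\alpha\delta}$ in $\FF_{n_i}^+$. This follows directly from \eqref{b-coef} by observing that the expansion of $b_{i,\alpha}b_{i,\delta}$ enumerates precisely those compositions of $\alpha\delta$ that break at position $|\alpha|$, while $b_{i,\alpha\delta}$ enumerates all such compositions with nonnegative weights. This will be the one substantive step; everything else is bookkeeping.

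For the first identity, the doubly commuting tensor structure gives
$$
\sum_{\boldsymbol\alpha\in\Lambda_{\bf p}} b_{1,\alpha_1}\cdots b_{k,\alpha_k}\,{\bf W}_{\boldsymbol\alpha}{\bf W}_{\boldsymbol\alpha}^* = B_1^{(p_1)}\otimes\cdots\otimes B_k^{(p_k)}, \qquad B_i^{(p)}:=\sum_{|\alpha|=p}b_{i,\alpha}\,W_{i,\alpha}W_{i,\alpha}^*.
$$
The formula for $W_{i,\alpha}W_{i,\alpha}^*$ recorded in the preliminaries shows that $B_i^{(p)}$ is diagonal in $\{e^i_\gamma\}$, annihilating vectors of length $<p$ and multiplying $e^i_\gamma$ by $b_{i,\alpha}b_{i,\delta}/b_{i,\gamma}$ when $\gamma=\alpha\delta$ with $|\alpha|=p$. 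The combinatorial inequality bounds every such eigenvalue by $1$, and equality is attained at $\gamma=\alpha$, $\delta=g_0^i$, so $\|B_i^{(p_i)}\|=1$ and the tensor product has norm $1$.

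For the chain of equalities I would first obtain the right-hand one from the orthogonality relation $W_{i,\alpha}^*W_{i,\beta}=0$ when $|\alpha|=|\beta|$ and $\alpha\ne\beta$, which reads off the defining action of $W_{i,j}$ on the basis. Tensoring, ${\bf W}_{\boldsymbol\alpha}^*{\bf W}_{\boldsymbol\beta}=0$ for distinct $\boldsymbol\alpha,\boldsymbol\beta\in\Lambda_{\bf p}$; expanding $\bigl(\sum A_{(\boldsymbol\alpha)}\otimes{\bf W}_{\boldsymbol\alpha}\bigr)^*\bigl(\sum A_{(\boldsymbol\alpha)}\otimes{\bf W}_{\boldsymbol\alpha}\bigr)$ collapses the cross terms to $\sum A_{(\boldsymbol\alpha)}^*A_{(\boldsymbol\alpha)}\otimes{\bf W}_{\boldsymbol\alpha}^*{\bf W}_{\boldsymbol\alpha}$, giving the right-hand equality after extracting square roots. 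For the left-hand equality I would test on $\xi\otimes 1$, where $1$ is the vacuum of $\otimes_i F^2(H_{n_i})$. Because ${\bf W}_{\boldsymbol\alpha}(1)=(b_{1,\alpha_1}\cdots b_{k,\alpha_k})^{-1/2}\,e^1_{\alpha_1}\otimes\cdots\otimes e^k_{\alpha_k}$ and these vectors are orthogonal,
$$
\Bigl\|\Bigl(\sum_{\boldsymbol\alpha}A_{(\boldsymbol\alpha)}\otimes{\bf W}_{\boldsymbol\alpha}\Bigr)(\xi\otimes 1)\Bigr\|^2 = \Bigl\langle \sum_{\boldsymbol\alpha}\tfrac{1}{b_{1,\alpha_1}\cdots b_{k,\alpha_k}}A_{(\boldsymbol\alpha)}^*A_{(\boldsymbol\alpha)}\,\xi,\,\xi\Bigr\rangle,
$$
producing one inequality. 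For the reverse direction, I would use that $\sum A^*A\otimes{\bf W}^*{\bf W}$ is diagonal in the second tensor factor with eigenvalues $\prod_i b_{i,\gamma_i}/b_{i,\alpha_i\gamma_i}\le (b_{1,\alpha_1}\cdots b_{k,\alpha_k})^{-1}$ by the same combinatorial inequality; positivity of $A_{(\boldsymbol\alpha)}^*A_{(\boldsymbol\alpha)}$ then forces $\|\sum A^*A\otimes{\bf W}^*{\bf W}\|\le \|\sum b^{-1}A^*A\|$. Combined with the right-hand equality this closes the chain. No serious obstacle is expected beyond careful multi-index bookkeeping.
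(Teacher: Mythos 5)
Your argument is correct, and its skeleton (orthogonal ranges of the ${\bf W}_{\boldsymbol\alpha}$ for $\boldsymbol\alpha\in\Lambda_{\bf p}$, plus testing on $\xi\otimes 1$ using ${\bf W}_{\boldsymbol\alpha}(1)=(b_{1,\alpha_1}\cdots b_{k,\alpha_k})^{-1/2}e^1_{\alpha_1}\otimes\cdots\otimes e^k_{\alpha_k}$) matches the paper's. Where you diverge is in how the two remaining norm estimates are produced. For the first identity the paper simply quotes $\|W_{i,\alpha}\|=b_{i,\alpha}^{-1/2}$ from \cite{Po-domains} and computes the norm of the row operator $[\sqrt{b_{1,\alpha_1}\cdots b_{k,\alpha_k}}\,{\bf W}_{\boldsymbol\alpha}]$, whereas you rederive everything from the submultiplicativity $b_{i,\alpha}b_{i,\delta}\le b_{i,\alpha\delta}$ (which is indeed immediate from \eqref{b-coef} and the positivity of the $a_{i,\gamma}$) by diagonalizing each factor $B_i^{(p_i)}$ explicitly; this makes your proof self-contained at the cost of reproving the cited ingredient. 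For the upper bound $\|\sum A_{(\boldsymbol\alpha)}\otimes{\bf W}_{\boldsymbol\alpha}\|\le\|\sum b^{-1}A_{(\boldsymbol\alpha)}^*A_{(\boldsymbol\alpha)}\|^{1/2}$, the paper uses a row--column Cauchy--Schwarz factorization whose second factor is exactly the first identity $\|\sum b\,{\bf W}_{\boldsymbol\alpha}{\bf W}_{\boldsymbol\alpha}^*\|=1$, while you instead pass through the operator inequality ${\bf W}_{\boldsymbol\alpha}^*{\bf W}_{\boldsymbol\alpha}\le (b_{1,\alpha_1}\cdots b_{k,\alpha_k})^{-1}I$ tensored against the positive operators $A_{(\boldsymbol\alpha)}^*A_{(\boldsymbol\alpha)}$ and then invoke the right-hand equality of the chain. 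Both routes are valid; the paper's is slightly slicker in that the first identity does double duty, while yours exposes the combinatorial mechanism ($b_{i,\alpha}b_{i,\delta}\le b_{i,\alpha\delta}$, with equality at $\delta=g_0^i$) that underlies all the norm computations.
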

\begin{proof}  First note that if $E_1,\ldots, E_m$ are operators on a Hilbert space and have orthogonal ranges, then
$\|[E_1,\ldots, E_m]\|=\max_{j\in \{1,\ldots, m\}} \|E_j\|$. We know from \cite{Po-domains}  that for each $i\in \{1,\ldots, k\}$ and any $\alpha_i\in \FF_{n_i}^+$, $\|W_{i,\alpha_i}\|=\frac{1}{\sqrt{b_{i,\alpha_i}}}$. Hence, if ${\boldsymbol \alpha}=(\alpha_1,\ldots,\alpha_k)\in  \FF_{n_1}^+\times \cdots \times\FF_{n_k}^+$, then
$$
\|{\bf W}_{\boldsymbol \alpha}\|=\frac{1}{\sqrt{b_{1,\alpha_1}\cdots b_{k,\alpha_k}}}.
$$
Since the operators ${\bf W}_{\boldsymbol \alpha}$, $\boldsymbol\alpha=(\alpha_1,\ldots, \alpha_k)\in  \Lambda_{\bf p}$, have orthogonal ranges, we deduce that
$$
\left\|[\sqrt{{{b_{1,\alpha_1}\cdots b_{k,\alpha_k}}}}{\bf W}_{\boldsymbol\alpha}:\ \boldsymbol\alpha=(\alpha_1,\ldots, \alpha_k)\in  \Lambda_{\bf p}]\right\|=1,
$$
which proves the first relation of the lemma.
Consequently, using the fact that $${\bf W}_{\boldsymbol \alpha}(1)=\frac{1}{\sqrt{b_{1,\alpha_1}\cdots b_{k,\alpha_k}}}e^1_{\alpha_1}\otimes\cdots \otimes e_{\alpha_k}^k,
$$
we deduce that, for any $h\in \cH$ with $\|h\|\leq 1$,
\begin{equation*}
\begin{split}
&\left|\sum_{\boldsymbol\alpha=(\alpha_1,\ldots, \alpha_k)\in  \Lambda_{\bf p}} \frac{1}{{b_{1,\alpha_1}\cdots b_{k,\alpha_k}}}\left<A_{(\boldsymbol\alpha)}^*A_{(\boldsymbol\alpha)}h,h\right>
\right|^{1/2}\\
&=
\left\| \sum_{\boldsymbol\alpha\in  \Lambda_{\bf p}}( A_{(\boldsymbol\alpha)} \otimes  {\bf W}_{\boldsymbol\alpha})(h\otimes 1)\right\|\\
&\leq \left\| \sum_{\boldsymbol\alpha\in  \Lambda_{\bf p}} A_{(\boldsymbol\alpha)} \otimes  {\bf W}_{\boldsymbol\alpha}\right\|\\
&\leq
 \left\|\sum_{\boldsymbol\alpha=(\alpha_1,\ldots, \alpha_k)\in  \Lambda_{\bf p}} \frac{1}{{b_{1,\alpha_1}\cdots b_{k,\alpha_k}}}A_{(\boldsymbol\alpha)}^*A_{(\boldsymbol\alpha)}
\right\|^{1/2}\left\|\sum\limits_{\boldsymbol\alpha=(\alpha_1,\ldots, \alpha_k)\in  \Lambda_{\bf p}}{{b_{1,\alpha_1}\cdots b_{k,\alpha_k}}}{\bf W}_{\boldsymbol\alpha}{\bf W}_{\boldsymbol\alpha}^*\right\|^{1/2}\\
&=\left\|\sum_{\boldsymbol\alpha=(\alpha_1,\ldots, \alpha_k)\in  \Lambda_{\bf p}} \frac{1}{{b_{1,\alpha_1}\cdots b_{k,\alpha_k}}}A_{(\boldsymbol\alpha)}^*A_{(\boldsymbol\alpha)}
\right\|^{1/2}.
\end{split}
\end{equation*}
Hence, we deduce that
\begin{equation*}
 \begin{split}
 \left\|\sum_{\boldsymbol\alpha=(\alpha_1,\ldots, \alpha_k)\in  \Lambda_{\bf p}} \frac{1}{{b_{1,\alpha_1}\cdots b_{k,\alpha_k}}}A_{(\boldsymbol\alpha)}^*A_{(\boldsymbol\alpha)}
\right\|^{1/2}&=\left\| \sum_{\boldsymbol\alpha\in  \Lambda_{\bf p}} A_{(\boldsymbol\alpha)} \otimes  {\bf W}_{\boldsymbol\alpha}\right\|.
 \end{split}
 \end{equation*}
 Since ${\bf W}_{\boldsymbol \alpha}$, $\boldsymbol\alpha=(\alpha_1,\ldots, \alpha_k)\in  \Lambda_{\bf p}$, have orthogonal ranges, we deduce that
\begin{equation*}
 \begin{split}
  \left\| \sum_{\boldsymbol\alpha\in  \Lambda_{\bf p}} A_{(\boldsymbol\alpha)} \otimes  {\bf W}_{\boldsymbol\alpha}\right\|=\left\|\sum\limits_{\boldsymbol\alpha\in  \Lambda_{\bf p}} A_{(\boldsymbol\alpha)}^*A_{(\boldsymbol\alpha)}\otimes {\bf W}_{\boldsymbol\alpha}^*{\bf W}_{\boldsymbol\alpha}\right\|^{1/2}.
 \end{split}
 \end{equation*}
 The proof is complete.
\end{proof}

The next result is an analogue of Abel theorem from complex analysis in  our noncommutative  multivariable setting.

\begin{theorem} \label{Abel} Let $\varphi:=\sum\limits_{\boldsymbol\alpha\in \FF_{n_1}^+\times \cdots \times\FF_{n_k}^+} A_{(\boldsymbol\alpha)}\otimes {\bf Z}_{\boldsymbol\alpha}$ be a formal power series with $A_{(\boldsymbol\alpha)}\in B(\cK)$ and  let ${\bf r}=(r_1,\ldots, r_k)$ be such that  $r_i>0$. Then the following statements hold.
\begin{enumerate}
\item[(i)] If the set
$$
\cA:=\left\{ \left\|r_1^{2p_1}\cdots r_k^{2p_k} \sum_{\boldsymbol\alpha\in  \Lambda_{\bf p}} \frac{1}{{b_{1,\alpha_1}\cdots b_{k,\alpha_k}}}A_{(\boldsymbol\alpha)}^*A_{(\boldsymbol\alpha)}
\right\|:\ {\bf p}=(p_1,\ldots, p_k)\in \ZZ_+^k\right\}
$$
is bounded, then the series
 $$
\sum_{{\bf p}\in \ZZ_+^k}\left\| \sum_{\boldsymbol\alpha\in  \Lambda_{\bf p}} A_{(\boldsymbol\alpha)} \otimes  {\bf X}_{\boldsymbol\alpha}\right\|
$$
is convergent in ${\bf r} {\bf D_q}(\cH)$,  the regular polydomain of  polyradius ${\bf r}=(r_1,\ldots, r_k)$,  and uniformly convergent on ${\bf s} {\bf D_q}(\cH)^-$ for any ${\bf s}=(s_1,\ldots, s_k)$ with $0\leq s_i<r_i$.
\item[(ii)] If the set $\cA$ is unbounded, then the series
$$\sum_{{\bf p}\in \ZZ_+^k}\left\| \sum_{\boldsymbol\alpha\in  \Lambda_{\bf p}} A_{(\boldsymbol\alpha)} \otimes  {\bf X}_{\boldsymbol\alpha}\right\|\quad \text{ and } \quad
\sum_{{\bf p}\in \ZZ_+^k} \sum_{\boldsymbol\alpha\in  \Lambda_{\bf p}} A_{(\boldsymbol\alpha)} \otimes  {\bf X}_{\boldsymbol\alpha}
$$
are divergent for some ${\bf X}\in {\bf r}{\bf D_q}(\cH)^-$ and some Hilbert space  $\cH$.
\end{enumerate}
\end{theorem}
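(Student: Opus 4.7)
The plan is to reduce both parts to the norm identity of Lemma \ref{Le}, with the Berezin transform playing the role of a noncommutative evaluation map sending ${\bf W}_{\boldsymbol\alpha}$ to ${\bf X}_{\boldsymbol\alpha}$. Set
$$C_{\bf p}:=\left\|\sum_{\boldsymbol\alpha\in\Lambda_{\bf p}} \frac{1}{b_{1,\alpha_1}\cdots b_{k,\alpha_k}} A_{(\boldsymbol\alpha)}^*A_{(\boldsymbol\alpha)}\right\|,$$
so that Lemma \ref{Le} gives $\left\|\sum_{\boldsymbol\alpha\in\Lambda_{\bf p}} A_{(\boldsymbol\alpha)}\otimes {\bf W}_{\boldsymbol\alpha}\right\|=\sqrt{C_{\bf p}}$, and the hypothesis of (i) reads $r_1^{2p_1}\cdots r_k^{2p_k}\,C_{\bf p}\le M$ for some $M<\infty$ and every ${\bf p}\in\ZZ_+^k$.

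For Part (i), I would fix ${\bf s}=(s_1,\ldots,s_k)$ with $0\le s_i<r_i$ and ${\bf X}\in {\bf s}{\bf D_q}(\cH)^-$. Writing ${\bf X}={\bf s}{\bf Y}$ with ${\bf Y}\in{\bf D_q}(\cH)^-$, the scaling $X_{i,\alpha_i}=s_i^{|\alpha_i|}Y_{i,\alpha_i}$ yields
$$\sum_{\boldsymbol\alpha\in\Lambda_{\bf p}}A_{(\boldsymbol\alpha)}\otimes{\bf X}_{\boldsymbol\alpha}=s_1^{p_1}\cdots s_k^{p_k}\sum_{\boldsymbol\alpha\in\Lambda_{\bf p}}A_{(\boldsymbol\alpha)}\otimes{\bf Y}_{\boldsymbol\alpha}.$$
Since the Berezin transform $\boldsymbol\cB_{\bf Y}$ restricts to a completely contractive unital homomorphism on the polydomain algebra $\boldsymbol\cA({\bf D_q})$ with $\boldsymbol\cB_{\bf Y}({\bf W}_{\boldsymbol\alpha})={\bf Y}_{\boldsymbol\alpha}$, applying $id\otimes\boldsymbol\cB_{\bf Y}$ gives
$$\left\|\sum_{\boldsymbol\alpha\in\Lambda_{\bf p}}A_{(\boldsymbol\alpha)}\otimes{\bf Y}_{\boldsymbol\alpha}\right\|\le \left\|\sum_{\boldsymbol\alpha\in\Lambda_{\bf p}}A_{(\boldsymbol\alpha)}\otimes{\bf W}_{\boldsymbol\alpha}\right\|=\sqrt{C_{\bf p}}$$
for ${\bf Y}\in{\bf D_q}(\cH)$, then for ${\bf Y}\in{\bf D_q}(\cH)^-$ via the $r\to 1$ approximation from Proposition \ref{reg-poly}. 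Combined with the hypothesis,
$$\left\|\sum_{\boldsymbol\alpha\in\Lambda_{\bf p}}A_{(\boldsymbol\alpha)}\otimes{\bf X}_{\boldsymbol\alpha}\right\|\le\sqrt{M}\prod_{i=1}^k\left(\frac{s_i}{r_i}\right)^{p_i},$$
and summing the product of $k$ geometric series over ${\bf p}\in\ZZ_+^k$ produces uniform convergence on ${\bf s}{\bf D_q}(\cH)^-$. Pointwise convergence on ${\bf r}{\bf D_q}(\cH)$ follows because any ${\bf X}={\bf r}{\bf Y}$ with ${\bf Y}\in{\bf D_q}(\cH)$ satisfies ${\bf Y}\in t{\bf D_q}(\cH)^-$ for some $t\in[0,1)$ by Proposition \ref{reg-poly}, hence ${\bf X}\in{\bf s}{\bf D_q}(\cH)^-$ with $s_i=tr_i<r_i$.

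For Part (ii), I would exhibit the explicit witness ${\bf X}={\bf r}{\bf W}$ on the Hilbert space $\cH=\bigotimes_{i=1}^k F^2(H_{n_i})$, which lies in ${\bf r}{\bf D_q}(\cH)^-$ because ${\bf W}\in{\bf D_q^-}(\cH)={\bf D_q}(\cH)^-$ by Proposition \ref{reg-poly}. The same scaling combined with Lemma \ref{Le} gives
$$\left\|\sum_{\boldsymbol\alpha\in\Lambda_{\bf p}}A_{(\boldsymbol\alpha)}\otimes{\bf X}_{\boldsymbol\alpha}\right\|^2 = r_1^{2p_1}\cdots r_k^{2p_k}\,C_{\bf p},$$
which is precisely the generic element of $\cA$. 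Since $\cA$ is unbounded by hypothesis, these norms are unbounded over ${\bf p}\in\ZZ_+^k$, so the terms of the series do not tend to zero, and both the series of norms and the series itself diverge.

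The only substantive step is the reduction from ${\bf X}$ to ${\bf W}$ in Part (i): one must argue that the complete contractivity of $\boldsymbol\cB_{\bf Y}$ on $\boldsymbol\cA({\bf D_q})$ delivers the inequality $\|\sum A_{(\boldsymbol\alpha)}\otimes{\bf Y}_{\boldsymbol\alpha}\|\le \|\sum A_{(\boldsymbol\alpha)}\otimes{\bf W}_{\boldsymbol\alpha}\|$ for \emph{every} ${\bf Y}\in{\bf D_q}(\cH)^-$, which requires handling the boundary case correctly via $\boldsymbol\cB_{\bf Y}(g)=\lim_{t\to 1}\boldsymbol\cB_{t{\bf Y}}(g)$ on $\cS$. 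Everything else—the geometric-series estimate, the identification of the squared norms for ${\bf X}={\bf r}{\bf W}$ with elements of $\cA$, and the divergence-from-unbounded-terms conclusion—is routine.
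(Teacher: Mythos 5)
Your proposal is correct and follows essentially the same route as the paper: part (i) is the noncommutative von Neumann inequality (which you derive via complete contractivity of the Berezin transform, exactly the mechanism behind the inequality the paper cites) combined with the norm identity of Lemma \ref{Le}, the decomposition ${\bf r}{\bf D_q}(\cH)=\bigcup_{0\le s_i<r_i}{\bf s}{\bf D_q}(\cH)^-$ from Proposition \ref{reg-poly}, and a product of geometric series; part (ii) uses the same witness ${\bf X}={\bf r}{\bf W}$ and the same boundedness-of-terms contradiction. No gaps.
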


\begin{proof} Let $s_i<r_i$ for  $i\in \{1,\ldots, k\}$, and ${\bf X}\in {\bf r} {\bf D_q}(\cH)$. Assume that  there is $C>0$ such that
$$\left\|r_1^{2p_1}\cdots r_k^{2p_k}\sum_{\boldsymbol\alpha=(\alpha_1,\ldots, \alpha_k)\in  \Lambda_{\bf p}} \frac{1}{{b_{1,\alpha_1}\cdots b_{k,\alpha_k}}}A_{(\boldsymbol\alpha)}^*A_{(\boldsymbol\alpha)}\right\|\leq C,\qquad {\bf p}=(p_1,\ldots, p_k)\in \ZZ_+^k.
$$
Due to the noncommutative von Neumann    inequality \cite{Po-Berezin-poly} (see also \cite{Po-poisson}), we have
\begin{equation*}
\begin{split}
\left\|\sum\limits_{\boldsymbol\alpha\in  \Lambda_{\bf p}}A_{(\boldsymbol\alpha)}\otimes{\bf X}_{\boldsymbol\alpha}\right\|
&\leq
\left\|\sum\limits_{\boldsymbol\alpha\in  \Lambda_{\bf p}}s_1^{p_1}\cdots s_k^{p_k}A_{(\boldsymbol\alpha)}\otimes {\bf W}_{\boldsymbol\alpha}\right\|\\
&=s_1^{p_1}\cdots s_k^{p_k}\left\|\sum_{\boldsymbol\alpha=(\alpha_1,\ldots, \alpha_k)\in  \Lambda_{\bf p}} \frac{1}{{b_{1,\alpha_1}\cdots b_{k,\alpha_k}}}A_{(\boldsymbol\alpha)}^*A_{(\boldsymbol\alpha)} \right\|^{1/2}\\
&< \left(\frac{s_1}{r_1}\right)^{p_1}\cdots \left(\frac{s_k}{r_k}\right)^{p_k}C^{1/2}
\end{split}
\end{equation*}
for any ${\bf X}\in s{\bf D_q}(\cH)^-$. On the other hand, due to Proposition \ref{reg-poly}, we have
${\bf r}{\bf D_q}(\cH)= \bigcup_{0\leq s_i<r_i}{\bf s}{\bf D_q}(\cH)^-$.
Now, since the series $\sum_{(p_1,\ldots, p_k)\in \ZZ_+^k} \left(\frac{s_1}{r_1}\right)^{p_1}\cdots \left(\frac{s_k}{r_k}\right)^{p_k}$ is convergent,  one can easily complete the proof of part (i).

To prove part (ii), assume that the set $\cA$ is unbounded. We already know that the tuple  ${\bf rW}:=(r_1{\bf W}_1,\ldots, r_k {\bf W}_k)$ is in the polydomain $  {\bf r D_q}(\otimes_{i=1}^k F^2(H_{n_i}))^-$.
Due to Lemma \ref{Le}, we have
\begin{equation*}
\begin{split}
 \left\|\sum\limits_{\boldsymbol\alpha\in  \Lambda_{\bf p}}A_{(\boldsymbol\alpha)}\otimes r_1^{p_1}\cdots r_k^{p_k}{\bf W}_{\boldsymbol\alpha}\right\|
  =
r_1^{p_1}\cdots r_k^{p_k}\left\|\sum_{\boldsymbol\alpha=(\alpha_1,\ldots, \alpha_k)\in  \Lambda_{\bf p}} \frac{1}{{b_{1,\alpha_1}\cdots b_{k,\alpha_k}}}A_{(\boldsymbol\alpha)}^*A_{(\boldsymbol\alpha)} \right\|^{1/2}.
\end{split}
\end{equation*}
    If  we assume that the series
 $$\sum\limits_{{\bf p}=(p_1,\ldots, p_k)\in \ZZ_+^k}\sum\limits_{\boldsymbol\alpha\in  \Lambda_{\bf p}}A_{(\boldsymbol\alpha)}\otimes r_1^{p_1}\cdots r_k^{p_k}{\bf W}_{\boldsymbol\alpha}
$$
is convergent in the operator norm, then $\left\{\left\|\sum\limits_{\boldsymbol\alpha\in  \Lambda_{\bf p}}A_{(\boldsymbol\alpha)}\otimes r_1^{p_1}\cdots r_k^{p_k}{\bf W}_{\boldsymbol\alpha}\right\|\right\}_{{\bf p}:=(p_1,\ldots, p_k)\in \ZZ_+^k}$  is a bounded sequence,
which contradicts that $\cA$ is an unbounded set. The proof is complete.
\end{proof}

 \begin{definition} A formal power series  $\varphi:=\sum\limits_{\boldsymbol\alpha\in \FF_{n_1}^+\times \cdots \times\FF_{n_k}^+} A_{(\boldsymbol\alpha)}\otimes {\bf Z}_{\boldsymbol\alpha}$ is called   {\it free holomorphic function} (with coefficients in $B(\cK)$) on the
{\it abstract  polydomain}
$\boldsymbol\rho{\bf D_q}:=\coprod_{\cH}\boldsymbol\rho{\bf D_q}(\cH)$, $\boldsymbol\rho=(\rho_1,\ldots,\rho_k)$,  $\rho_i>0$, if the series
$$
\varphi({\bf X} ):=\sum\limits_{{\bf p}\in \ZZ_+^k}\sum\limits_{\boldsymbol\alpha\in  \Lambda_{\bf p}} A_{(\boldsymbol\alpha)}\otimes  {\bf X}_{\boldsymbol\alpha}
$$
is convergent in the operator norm topology for any ${\bf X}=\{X_{i,j}\}\in \boldsymbol\rho{\bf D_q}(\cH)$    and any Hilbert space $\cH$. We denote by $Hol({\boldsymbol\rho {\bf D_q}})$ the set of all free holomorphic functions on  ${\bf \boldsymbol\rho {\bf D_q}}$ with scalar coefficients.
\end{definition}

Using Theorem \ref{Abel}, one can easily deduce the following characterization for free holomorphic functions on regular polydomains.

\begin{corollary} \label{Cara} Let   ${\bf W} $
    be the universal model associated with the abstract regular polydomain
  ${\bf D_q}$. A formal power series $\varphi=\sum\limits_{\boldsymbol\alpha\in \FF_{n_1}^+\times \cdots \times\FF_{n_k}^+} A_{(\boldsymbol\alpha)}\otimes {\bf Z}_{\boldsymbol\alpha}$  is a  free holomorphic function (with coefficients in $B(\cK)$) on the
abstract  polydomain
$\boldsymbol\rho{\bf D_q}$, where
  $\boldsymbol\rho=(\rho_1,\ldots,\rho_k)$,  $\rho_i>0$, if and only if  the series
$$\sum\limits_{{\bf p}=(p_1,\ldots, p_k)\in \ZZ_+^k}\left\|\sum\limits_{\boldsymbol\alpha\in  \Lambda_{\bf p}}A_{(\boldsymbol\alpha)}\otimes r_1^{p_1}\cdots r_k^{p_k}{\bf W}_{\boldsymbol\alpha}\right\|
$$
converges for any $r_i\in [0, \rho_i)$ and  $i\in \{1,\ldots, k\}$.
\end{corollary}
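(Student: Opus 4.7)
The entire argument rides on a single algebraic identity obtained from Lemma \ref{Le} by replacing each $A_{(\boldsymbol\alpha)}$, $\boldsymbol\alpha\in\Lambda_{\bf p}$, by $r_1^{p_1}\cdots r_k^{p_k}A_{(\boldsymbol\alpha)}$, namely
$$\left\|\sum_{\boldsymbol\alpha\in\Lambda_{\bf p}}A_{(\boldsymbol\alpha)}\otimes r_1^{p_1}\cdots r_k^{p_k}{\bf W}_{\boldsymbol\alpha}\right\|^{2}=\left\|r_1^{2p_1}\cdots r_k^{2p_k}\sum_{\boldsymbol\alpha\in\Lambda_{\bf p}}\frac{1}{b_{1,\alpha_1}\cdots b_{k,\alpha_k}}A_{(\boldsymbol\alpha)}^{*}A_{(\boldsymbol\alpha)}\right\|.$$
This translates the series appearing in the corollary into a statement about the set $\cA$ of Theorem \ref{Abel} associated with the polyradius ${\bf r}=(r_1,\ldots,r_k)$.

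\textbf{Necessity.} Assume $\varphi\in Hol(\boldsymbol\rho{\bf D_q})$ and fix $r_i\in[0,\rho_i)$. Choose $r_i'\in(r_i,\rho_i)$. By the scaled version of Proposition \ref{reg-poly}(ii) (and the remark following the proposition), $\boldsymbol r'{\bf D_q}(\cH)^{-}\subseteq \boldsymbol\rho{\bf D_q}(\cH)$ for every Hilbert space $\cH$, so the series defining $\varphi({\bf X})$ converges at every such ${\bf X}$. The contrapositive of Theorem \ref{Abel}(ii) therefore forces the set $\cA$ with polyradius ${\bf r}'$ to be bounded by some constant $C>0$. Combining this with the identity above yields the geometric majorant
$$\left\|\sum_{\boldsymbol\alpha\in\Lambda_{\bf p}}A_{(\boldsymbol\alpha)}\otimes r_1^{p_1}\cdots r_k^{p_k}{\bf W}_{\boldsymbol\alpha}\right\|\leq C^{1/2}\left(\frac{r_1}{r_1'}\right)^{p_1}\cdots\left(\frac{r_k}{r_k'}\right)^{p_k},$$
and summing over ${\bf p}\in\ZZ_+^k$ establishes the desired convergence.

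\textbf{Sufficiency.} Assume the series in the statement converges for every choice of $r_i\in[0,\rho_i)$. Then its terms are bounded, so by the identity above the set $\cA$ associated with ${\bf r}$ is bounded as well. Given ${\bf X}\in\boldsymbol\rho{\bf D_q}(\cH)$, the remark after Proposition \ref{reg-poly} (applied to $\boldsymbol\rho{\bf D_q}$) yields $r_i'<\rho_i$ with ${\bf X}\in\boldsymbol r'{\bf D_q}(\cH)^{-}$. Pick an intermediate $r_i''\in(r_i',\rho_i)$; boundedness of $\cA$ for the polyradius ${\bf r}''$ together with Theorem \ref{Abel}(i) gives uniform convergence of $\sum_{{\bf p}}\|\sum_{\boldsymbol\alpha\in\Lambda_{\bf p}}A_{(\boldsymbol\alpha)}\otimes{\bf X}_{\boldsymbol\alpha}\|$ on $\boldsymbol r'{\bf D_q}(\cH)^{-}$, hence at ${\bf X}$. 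Therefore $\varphi\in Hol(\boldsymbol\rho{\bf D_q})$.

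The only subtle point — and the main technical obstacle — is the repeated interposition of an intermediate polyradius (${\bf r}'$ or ${\bf r}''$) between the given ${\bf r}$ and $\boldsymbol\rho$. This is what allows Theorem \ref{Abel} to supply boundedness of $\cA$ from pointwise convergence in one direction, and pointwise convergence from boundedness of $\cA$ in the other. Once one accepts the scaled complete Reinhardt decomposition $\boldsymbol\rho{\bf D_q}(\cH)=\bigcup_{0\leq r'_i<\rho_i}\boldsymbol r'{\bf D_q}(\cH)^{-}$ (which follows verbatim from the proof of Proposition \ref{reg-poly}(ii) by rescaling), the rest is essentially a bookkeeping exercise.
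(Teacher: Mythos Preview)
Your proof is correct and follows exactly the route the paper intends: the paper merely states that the corollary is deduced from Theorem~\ref{Abel}, and you have filled in the details by using Lemma~\ref{Le} to translate between the norm series and the set $\cA$, invoking Theorem~\ref{Abel}(ii) (contrapositive) for necessity and Theorem~\ref{Abel}(i) for sufficiency, with the Reinhardt decomposition from Proposition~\ref{reg-poly} providing the intermediate polyradii. There is nothing to correct.
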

Throughout the paper, we say that the abstract polydomain ${\bf D_q}$ or a free holomorphic function $F$ on ${\bf D_q}$ has a certain property, if the property holds for any Hilbert space representation of ${\bf D_q}$ and $F$, respectively.
We remark that the coefficients of a free holomorphic function on a polydomain  ${\bf D_q}$ are uniquely determined by its representation on an infinite dimensional Hilbert space.

\begin{corollary} \label{scalar} If $\varphi:=\sum\limits_{\boldsymbol\alpha\in \FF_{n_1}^+\times \cdots \times\FF_{n_k}^+} a_{(\boldsymbol\alpha)} {\bf Z}_{\boldsymbol\alpha}$, $a_{(\alpha)}\in \CC$,  is a  free holomorphic function  on the
abstract  polydomain
$\boldsymbol\rho{\bf D_q}$,  $\rho=(\rho_1,\ldots, \rho_k)$, then its representation on $\CC$, i.e.
$$
\varphi(\lambda_1,\ldots, \lambda_k):=\sum\limits_{\boldsymbol\alpha\in \FF_{n_1}^+\times \cdots \times\FF_{n_k}^+} a_{(\boldsymbol\alpha)} \lambda_{\boldsymbol\alpha},\quad \lambda_i=(\lambda_{i,1},\ldots, \lambda_{i, n_i}),
$$
is a holomorphic function on the scalar polydomain $\boldsymbol\rho{\bf D_q}(\CC)$.
\end{corollary}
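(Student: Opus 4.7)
The plan is to specialize the operator-theoretic framework to the one-dimensional Hilbert space $\cH=\CC$ and then upgrade pointwise convergence to classical holomorphy via Weierstrass' theorem for holomorphic functions of several complex variables. I expect no deep difficulty; the task is to assemble pieces already proved in the excerpt.

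First, I would view a scalar tuple $\lambda=(\lambda_{i,j})\in \CC^{n_1+\cdots+n_k}$ as a tuple of bounded linear operators on $\CC$; since scalars commute pairwise, the commutation requirement built into $B(\CC)^{n_1}\times_c\cdots\times_c B(\CC)^{n_k}$ is automatic, and the scalar polydomain $\boldsymbol\rho{\bf D_q}(\CC)$ coincides with the $\cH=\CC$ realization of the abstract polydomain $\boldsymbol\rho{\bf D_q}$. By Proposition \ref{reg-poly}(i), $\boldsymbol\rho{\bf D_q}(\CC)$ is therefore a relatively open subset of $\CC^{n_1+\cdots+n_k}$, so asking for classical holomorphy is legitimate, and by the very definition of a free holomorphic function, the series $\varphi(\lambda)=\sum_{{\bf p}\in\ZZ_+^k}\sum_{\boldsymbol\alpha\in\Lambda_{\bf p}}a_{(\boldsymbol\alpha)}\lambda_{\boldsymbol\alpha}$ converges absolutely at every $\lambda\in\boldsymbol\rho{\bf D_q}(\CC)$.

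Second, I would upgrade pointwise convergence to local uniform convergence on $\boldsymbol\rho{\bf D_q}(\CC)$. Given a compact $K\subset\boldsymbol\rho{\bf D_q}(\CC)$, Proposition \ref{reg-poly}(ii) supplies, for each $\lambda^{(0)}\in K$, a polyradius with entries strictly less than $\rho_i$ such that $\lambda^{(0)}$ lies in the corresponding scaled closure ${\bf r}{\bf D_q}(\CC)^-$. A finite subcover, combined with the monotonicity ${\bf r}{\bf D_q}(\CC)^-\subseteq{\bf s}{\bf D_q}(\CC)^-$ whenever $r_i\leq s_i$ (immediate from the complete Reinhardt property in Proposition \ref{reg-poly}(iii), via the factorization $r_i=s_i\cdot(r_i/s_i)$ with $r_i/s_i\in\overline{\DD}$), yields a single polyradius ${\bf s}=(s_1,\ldots,s_k)$ with $s_i<\rho_i$ and $K\subset{\bf s}{\bf D_q}(\CC)^-$. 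Choosing ${\bf r}$ with $s_i<r_i<\rho_i$ and applying Theorem \ref{Abel}(i), whose boundedness hypothesis on the auxiliary set $\cA$ is secured by Corollary \ref{Cara} together with Lemma \ref{Le} since $\varphi\in Hol(\boldsymbol\rho{\bf D_q})$, gives uniform convergence of the partial sums on ${\bf s}{\bf D_q}(\cH)^-$ for every Hilbert space $\cH$; restricting to $\cH=\CC$ yields uniform convergence on $K$.

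Finally, each partial sum of $\varphi$ is a polynomial in the complex variables $\lambda_{i,j}$, hence holomorphic on $\CC^{n_1+\cdots+n_k}$. By Weierstrass' theorem for holomorphic functions of several complex variables, local uniform limits of holomorphic functions are holomorphic, so $\varphi$ is holomorphic on $\boldsymbol\rho{\bf D_q}(\CC)$. The main step requiring care is extracting a single polyradius ${\bf s}$ covering an arbitrary compact set $K$; as indicated above, this reduces to the Reinhardt-type monotonicity, which is an immediate consequence of Proposition \ref{reg-poly}.
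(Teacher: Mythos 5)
Your argument is correct and follows exactly the route the paper intends: the corollary is stated without proof as an immediate consequence of Theorem \ref{Abel} and Corollary \ref{Cara}, namely specialization to $\cH=\CC$, uniform convergence of the (polynomial) partial sums on the sets ${\bf s}{\bf D_q}(\CC)^-$ with $s_i<\rho_i$, and the classical Weierstrass theorem. The only cosmetic point is your compactness step: the sets ${\bf r}{\bf D_q}(\CC)^-$ are closed, so you should extract the finite subcover from the open cover by the sets ${\bf r}{\bf D_q}(\CC)$ (available from Proposition \ref{reg-poly}) and only then pass to the coordinatewise maximum and its closure via the Reinhardt monotonicity.
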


In what follows, we obtain Cauchy type inequalities for the coefficients of  free holomorphic functions on regular polydomains.
\begin{theorem}\label{Cauchy-ineq} Let $F $ be a free holomorphic function on the polydomain $\boldsymbol \rho{\bf D_q}$, with representation
$$F({\bf X}):=\sum\limits_{(p_1,\ldots, p_k)\in \ZZ_+^k}\sum\limits_{\boldsymbol\alpha\in  \Lambda_{\bf p}}A_{(\boldsymbol\alpha)}\otimes{\bf  X}_{\boldsymbol\alpha}, \qquad {\bf X}\in \boldsymbol \rho{\bf D_q}(\cH),
$$
where $A_{(\boldsymbol\alpha)}\in B(\cK)$.
Let ${\bf r}:=(r_1,\ldots, r_k)$ be such that $0<r_i<\rho_i$ and define
$$M({\bf r}):=\sup\|F({\bf X})\|,
$$
where the supremum is taken over all ${{\bf X}\in {\bf r}{\bf D_q}(\cH)^-}$ and any Hilbert space $\cH$.
Then, for each $k$-tuple ${\bf p}:=(p_1,\ldots, p_k)\in \ZZ_+^k$, we have
$$
\left\|\sum_{\boldsymbol\alpha=(\alpha_1,\ldots, \alpha_k)\in  \Lambda_{\bf p}} \frac{1}{{b_{1,\alpha_1}\cdots b_{k,\alpha_k}}}A_{(\boldsymbol\alpha)}^*A_{(\boldsymbol\alpha)}\right\|^{1/2}\leq \frac{1}{r_1^{p_1}\cdots r_k^{p_k}}M({\bf r}).
$$
Moreover, $M({\bf r})=\|F({\bf rW})\|$, where ${\bf W}$ is the universal model of the regular polydomain ${\bf D_q}$.
\end{theorem}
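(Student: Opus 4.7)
\medskip
\noindent\textbf{Proof plan.} My plan is to first establish the identity $M({\bf r})=\|F({\bf rW})\|$, and then extract each homogeneous block $F_{\bf p}({\bf rW})$ from $F({\bf rW})$ by averaging over the torus action granted by the complete Reinhardt property in Proposition \ref{reg-poly}.

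For the identity, the inequality $\|F({\bf rW})\|\leq M({\bf r})$ is immediate since ${\bf rW}$ lies in ${\bf r}{\bf D_q}(\otimes_{i=1}^k F^2(H_{n_i}))^-$. For the reverse, given ${\bf X}\in{\bf r}{\bf D_q}(\cH)^-$, I would write ${\bf X}={\bf rY}$ with ${\bf Y}\in{\bf D_q}(\cH)^-$, pick $s\in(0,1)$ so that $s{\bf Y}\in{\bf D_q}(\cH)$ (allowed by Proposition \ref{reg-poly}(ii)), and apply the completely contractive Berezin homomorphism $\boldsymbol\cB_{s{\bf Y}}$ to the norm-convergent series $F({\bf rW})$. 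Since $\boldsymbol\cB_{s{\bf Y}}({\bf W}_{i,j})=sY_{i,j}$, the polydomain-algebra homomorphism property combined with termwise continuity yields $\boldsymbol\cB_{s{\bf Y}}(F({\bf rW}))=F(s{\bf X})$; letting $s\to 1$ and using continuity of $F$ on ${\bf r}{\bf D_q}(\cH)^-\subseteq\boldsymbol\rho{\bf D_q}(\cH)$ gives $\|F({\bf X})\|\leq\|F({\bf rW})\|$.

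With the identity established, I would extract the block $F_{\bf p}({\bf rW}):=\sum_{\boldsymbol\alpha\in\Lambda_{\bf p}}A_{(\boldsymbol\alpha)}\otimes r_1^{p_1}\cdots r_k^{p_k}{\bf W}_{\boldsymbol\alpha}$ using the complete Reinhardt property: the rotated tuple $(e^{i\theta_1}r_1{\bf W}_1,\ldots,e^{i\theta_k}r_k{\bf W}_k)$ still lies in ${\bf r}{\bf D_q}^-$ for every $\boldsymbol\theta\in[0,2\pi)^k$, hence its $F$-value has operator norm at most $M({\bf r})$. Each ${\bf W}_{\boldsymbol\alpha}$ with $\boldsymbol\alpha\in\Lambda_{\bf p}$ acquires the factor $e^{i(p_1\theta_1+\cdots+p_k\theta_k)}$ under this rotation, so the Fourier projection
$$F_{\bf p}({\bf rW})=\frac{1}{(2\pi)^k}\int_{[0,2\pi)^k}e^{-i(p_1\theta_1+\cdots+p_k\theta_k)}F(e^{i\theta_1}r_1{\bf W}_1,\ldots,e^{i\theta_k}r_k{\bf W}_k)\,d\boldsymbol\theta$$
gives $\|F_{\bf p}({\bf rW})\|\leq M({\bf r})$. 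Lemma \ref{Le} then identifies $\|F_{\bf p}({\bf rW})\|$ with $r_1^{p_1}\cdots r_k^{p_k}\bigl\|\sum_{\boldsymbol\alpha\in\Lambda_{\bf p}}A_{(\boldsymbol\alpha)}^*A_{(\boldsymbol\alpha)}/(b_{1,\alpha_1}\cdots b_{k,\alpha_k})\bigr\|^{1/2}$, and dividing by $r_1^{p_1}\cdots r_k^{p_k}$ yields the asserted Cauchy bound.

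The main technical point is justifying the term-by-term application of both the Berezin transform and the Fourier projection to the infinite series $F({\bf rW})$. Both steps are legitimate provided $F({\bf rW})$ converges in operator norm, which is exactly what Corollary \ref{Cara} guarantees on the strict interior $r_i<\rho_i$. Once this is available, the rest is a clean torus-averaging extraction of homogeneous components followed by the isometric identity in Lemma \ref{Le}.
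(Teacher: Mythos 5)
Your argument is correct, but for the main Cauchy estimate it takes a genuinely different route from the paper. You extract the homogeneous block $F_{\bf p}({\bf rW})$ by torus averaging: you rotate ${\bf rW}$ to $(e^{i\theta_1}r_1{\bf W}_1,\ldots,e^{i\theta_k}r_k{\bf W}_k)$, which stays in ${\bf r}{\bf D_q}^-$ by the complete Reinhardt property of Proposition \ref{reg-poly}, and then apply the Fourier projection $\frac{1}{(2\pi)^k}\int e^{-i\langle{\bf p},\boldsymbol\theta\rangle}F(\cdot)\,d\boldsymbol\theta$ (justified by the uniform norm convergence from Corollary \ref{Cara}), obtaining $\|F_{\bf p}({\bf rW})\|\leq M({\bf r})$ before invoking Lemma \ref{Le}. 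The paper instead avoids any integration: it tests $F({\bf rW})$ against the operator $\sum_{\boldsymbol\alpha\in\Lambda_{\bf p}}A_{(\boldsymbol\alpha)}^*\otimes{\bf W}_{\boldsymbol\alpha}^*$ on vectors of the form $h\otimes 1$, using that the ${\bf W}_{\boldsymbol\alpha}$ have orthogonal ranges so that only the degree-${\bf p}$ block survives the pairing with the vacuum; this yields $r_1^{p_1}\cdots r_k^{p_k}\bigl\|\Gamma_{\bf p}^{1/2}h\bigr\|^2\leq\|\Gamma_{\bf p}\|^{1/2}M({\bf r})\|h\|^2$ with $\Gamma_{\bf p}:=\sum_{\boldsymbol\alpha\in\Lambda_{\bf p}}\frac{1}{b_{1,\alpha_1}\cdots b_{k,\alpha_k}}A_{(\boldsymbol\alpha)}^*A_{(\boldsymbol\alpha)}$, and the conclusion follows on taking the supremum over $h$ and cancelling $\|\Gamma_{\bf p}\|^{1/2}$. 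Your method buys the cleaner intermediate statement $\|F_{\bf p}({\bf rW})\|\leq M({\bf r})$ for each homogeneous part and transfers to any setting with a circular symmetry, at the cost of justifying a Bochner integral and an interchange of sum and integral; the paper's method is purely Hilbert-space algebra on the Fock space and needs no convergence bookkeeping beyond the norm convergence of $F({\bf rW})$ itself. Your treatment of the identity $M({\bf r})=\|F({\bf rW})\|$ via the completely contractive Berezin transform at $s{\bf Y}$ and the limit $s\to 1$ is exactly the von Neumann inequality argument the paper cites for that step.
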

\begin{proof}
Using the fact that the operators ${\bf W}_{\boldsymbol\alpha}$, with
 $\boldsymbol\alpha=(\alpha_1,\ldots, \alpha_k)\in \FF_{n_1}^+\times\cdots\times \FF_{n_k}^+$, $|\alpha_i|=p_i$, have orthogonal ranges, and Lemma \ref{Le}, we deduce that
\begin{equation*}
\begin{split}
&\left|\left< \left(\sum\limits_{\boldsymbol\alpha\in  \Lambda_{\bf p}}A_{(\boldsymbol\alpha)}^*\otimes  {\bf W}_{\boldsymbol\alpha}^*\right)F({\bf r W})(h\otimes 1), h\otimes 1\right>\right|\\
&\qquad \leq \left\|\sum\limits_{\boldsymbol\alpha\in  \Lambda_{\bf p}}A_{(\boldsymbol\alpha)}^*\otimes  {\bf W}_{\boldsymbol\alpha}^*\right\| M({\bf r}) \|h\|^2\\
&\qquad =
\left\|\sum\limits_{\boldsymbol\alpha\in  \Lambda_{\bf p}}\frac{1}{{b_{1,\alpha_1}\cdots b_{k,\alpha_k}}}A_{(\boldsymbol\alpha)}^*A_{(\boldsymbol\alpha)}\right\|^{1/2}
\left\|\sum\limits_{\boldsymbol\alpha\in  \Lambda_{\bf p}}{{b_{1,\alpha_1}\cdots b_{k,\alpha_k}}}W_{\boldsymbol\alpha}W_{\boldsymbol\alpha}^*\right\|^{1/2}
M({\bf r}) \|h\|^2\\
&\qquad
=\left\|\sum\limits_{\boldsymbol\alpha\in  \Lambda_{\bf p}}\frac{1}{{b_{1,\alpha_1}\cdots b_{k,\alpha_k}}}A_{(\boldsymbol\alpha)}^*A_{(\boldsymbol\alpha)}\right\|^{1/2}M({\bf r}) \|h\|^2
\end{split}
\end{equation*}
for any $h\in \cK$. On the other hand, we have
\begin{equation*}
\begin{split}
&\left< \left(\sum\limits_{\boldsymbol\alpha\in  \Lambda_{\bf p}}A_{(\boldsymbol\alpha)}^*\otimes  {\bf W}_{\boldsymbol\alpha}^*\right)F({\bf r W})(h\otimes 1), h\otimes 1\right>\\
&=r_1^{p_1}\cdots r_k^{p_k}
\left< \left(\sum\limits_{\boldsymbol\alpha\in  \Lambda_{\bf p}}A_{(\boldsymbol\alpha)}^* { A}_{(\boldsymbol\alpha)}\otimes {\bf W}_{\boldsymbol\alpha}^*{\bf W}_{\boldsymbol\alpha}\right) (h\otimes 1), h\otimes 1\right>\\
&=r_1^{p_1}\cdots r_k^{p_k}\left\|\left(\sum\limits_{\boldsymbol\alpha\in  \Lambda_{\bf p}}\frac{1}{{b_{1,\alpha_1}\cdots b_{k,\alpha_k}}}A_{(\boldsymbol\alpha)}^*A_{(\boldsymbol\alpha)}\right)^{1/2}h\right\|^2.
\end{split}
\end{equation*}
Hence, using the   inequality above , we deduce that

$$
r_1^{p_1}\cdots r_k^{p_k}\left\|\left(\sum\limits_{\boldsymbol\alpha\in  \Lambda_{\bf p}}\frac{1}{{b_{1,\alpha_1}\cdots b_{k,\alpha_k}}}A_{(\boldsymbol\alpha)}^*A_{(\boldsymbol\alpha)}\right)^{1/2}h\right\|^2 \leq
\left\|\sum\limits_{\boldsymbol\alpha\in  \Lambda_{\bf p}}\frac{1}{{b_{1,\alpha_1}\cdots b_{k,\alpha_k}}}A_{(\boldsymbol\alpha)}^*A_{(\boldsymbol\alpha)}\right\|^{1/2}M({\bf r}) \|h\|^2
$$
for any $h\in \cK$. Now, the inequality in the theorem follows.
The fact that $M({\bf r})=\|F({\bf rW})\|$ is due to the noncommutative von Neumann inequality \cite{Po-poisson}. The proof is complete.
\end{proof}

Note  that due to the fact that there is $r\in (0,1)$ such that $r{\bf P_n}(\cH)\subset
{\bf D_q}(\cH)$, we have
$$
 B(\cH)^{n_1}\times_c\cdots \times_c B(\cH)^{n_k}=\bigcup_{\rho>0} \rho{\bf D_q}(\cH).
$$
Assume that $\cH$ is an infinite dimensional separable Hilbert space.
We say that $F$ is an {\it entire function} in $B(\cH)^{n_1}\times_c\cdots \times_c B(\cH)^{n_k}$ if $F$ is free holomorphic on every regular polydomain $\rho{\bf D_q}(\cH)$, $\rho>0$.

In what follows, we obtain   an analogue of Liouville's theorem for entire functions  on $B(\cH)^{n_1}\times_c\cdots \times_c B(\cH)^{n_k}$.

\begin{theorem} If $F:B(\cH)^{n_1}\times_c\cdots \times_c B(\cH)^{n_k}\to B(\cK)\otimes_{min} B(\cH)$ is an entire function  with the property that there is a constant $C>0$  and ${\bf m}:=(m_1,\ldots, m_k)\in \ZZ_+^k$ such that
$$
\|F({\bf X})\|\leq C \left\|\sum\limits_{\boldsymbol\alpha\in  \Lambda_{\bf m}}{{b_{1,\alpha_1}\cdots b_{k,\alpha_k}}}{\bf X}_{\boldsymbol\alpha}{\bf X}_{\boldsymbol\alpha}^*\right\|^{1/2}
$$
for any ${\bf X}\in B(\cH)^{n_1}\times_c\cdots \times_c B(\cH)^{n_k}$, then $F$ is a polynomial of degree at most $m_1+\cdots+m_k$. In particular, a bounded free holomorphic function must be constant.
\end{theorem}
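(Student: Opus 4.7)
The plan is to read off the Taylor coefficients of $F$ on dilates of the universal model and apply Theorem~\ref{Cauchy-ineq} to force the high-degree ones to vanish. Since $F$ is entire, it has a unique power-series representation
$$F(\mathbf{X})=\sum_{\mathbf{p}\in\ZZ_+^k}\sum_{\boldsymbol\alpha\in\Lambda_{\mathbf{p}}}A_{(\boldsymbol\alpha)}\otimes \mathbf{X}_{\boldsymbol\alpha}$$
converging on every $\rho\mathbf{D_q}(\cH)$, $\rho>0$. Writing $|\mathbf{m}|:=m_1+\cdots+m_k$ and $|\mathbf{p}|:=p_1+\cdots+p_k$, the goal is to show $A_{(\boldsymbol\alpha)}=0$ whenever $|\mathbf{p}|>|\mathbf{m}|$.

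Step one is to evaluate the growth hypothesis on $\mathbf{X}=r\mathbf{W}$ for arbitrary $r>0$, viewing $\mathbf{W}$ as acting on the infinite-dimensional separable Hilbert space $\bigotimes_{i=1}^k F^2(H_{n_i})$. Since $(r\mathbf{W})_{\boldsymbol\alpha}=r^{|\alpha_1|+\cdots+|\alpha_k|}\mathbf{W}_{\boldsymbol\alpha}$, the restriction of the sum to $\boldsymbol\alpha\in\Lambda_{\mathbf{m}}$ produces a factor $r^{2|\mathbf{m}|}$, and the first equality of Lemma~\ref{Le} gives
$$\left\|\sum_{\boldsymbol\alpha\in\Lambda_{\mathbf{m}}} b_{1,\alpha_1}\cdots b_{k,\alpha_k}\,\mathbf{W}_{\boldsymbol\alpha}\mathbf{W}_{\boldsymbol\alpha}^*\right\|=1.$$
Taking square roots, the hypothesis therefore yields
$$\|F(r\mathbf{W})\|\leq C r^{|\mathbf{m}|},\qquad r>0.$$

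Step two is to feed this into Theorem~\ref{Cauchy-ineq} with the polyradius $\mathbf{r}=(r,\ldots,r)$. That theorem identifies $M(\mathbf{r})$ with $\|F(\mathbf{rW})\|$, so $M(\mathbf{r})\leq C r^{|\mathbf{m}|}$, while its Cauchy estimate reads
$$\left\|\sum_{\boldsymbol\alpha\in\Lambda_{\mathbf{p}}}\frac{1}{b_{1,\alpha_1}\cdots b_{k,\alpha_k}}A_{(\boldsymbol\alpha)}^*A_{(\boldsymbol\alpha)}\right\|^{1/2}\leq \frac{M(\mathbf{r})}{r^{|\mathbf{p}|}}\leq C r^{|\mathbf{m}|-|\mathbf{p}|}.$$
Letting $r\to\infty$ whenever $|\mathbf{p}|>|\mathbf{m}|$ forces the left-hand side to be zero, hence $A_{(\boldsymbol\alpha)}=0$ for every $\boldsymbol\alpha\in\Lambda_{\mathbf{p}}$ with $|\mathbf{p}|>|\mathbf{m}|$. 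This shows $F$ is a polynomial of total degree at most $|\mathbf{m}|$.

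For the last assertion, a bounded entire $F$ satisfies the hypothesis with $\mathbf{m}=(0,\ldots,0)$: indeed $\Lambda_{(0,\ldots,0)}=\{(g_0^1,\ldots,g_0^k)\}$, $\mathbf{X}_{(g_0^1,\ldots,g_0^k)}=I$, and $b_{i,g_0^i}=1$, so the right-hand side collapses to $C$. The preceding argument then forces $F$ to have degree $0$, i.e.\ to be a constant. I do not foresee a substantive obstacle; the only delicate point is the bookkeeping that tailors the hypothesis to the Cauchy estimate — specifically, Lemma~\ref{Le} is what converts the $b_{1,\alpha_1}\cdots b_{k,\alpha_k}$-normalization appearing in the growth bound into the sharp power $r^{|\mathbf{m}|}$ that matches the Cauchy exponent $r^{-|\mathbf{p}|}$.
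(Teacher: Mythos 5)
Your proof is correct and follows essentially the same route as the paper: evaluate the growth hypothesis at $r\mathbf{W}$ via Lemma~\ref{Le}, feed the resulting bound $\|F(r\mathbf{W})\|\le C r^{|\mathbf{m}|}$ into the Cauchy inequality of Theorem~\ref{Cauchy-ineq}, and let $r\to\infty$. The only (harmless) difference is that the paper works with a full polyradius $(r_1,\ldots,r_k)$ and sends one $r_s\to\infty$ at a time, which yields the slightly stronger conclusion that $A_{(\boldsymbol\alpha)}=0$ as soon as some single $p_s>m_s$, whereas your scalar radius only kills coefficients of total degree exceeding $m_1+\cdots+m_k$ --- which is all the stated theorem requires.
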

\begin{proof} Let $F$ have the representation
$$F({\bf X})=\sum\limits_{{\bf p}=(p_1,\ldots, p_k)\in \ZZ_+^k}\sum\limits_{\boldsymbol\alpha\in  \Lambda_{\bf p}}A_{(\boldsymbol\alpha)}\otimes {\bf X}_{\boldsymbol\alpha}, \qquad {\bf X}\in B(\cH)^{n_1}\times_c\cdots \times_c B(\cH)^{n_k}.
$$
Due to the hypothesis, we have
$$\|F({\bf rW})\|\leq C r_1^{m_1}\cdots r_k^{m_k}\left\|\sum\limits_{\boldsymbol\alpha\in  \Lambda_{\bf m}}{{b_{1,\alpha_1}\cdots b_{k,\alpha_k}}}{\bf W}_{\boldsymbol\alpha}{\bf W}_{\boldsymbol\alpha}^*\right\|^{1/2}\leq C r_1^{m_1}\cdots r_k^{m_k}
$$
for any $r_i>0$.
Using  Theorem \ref{Cauchy-ineq}, we obtain
\begin{equation*}
\begin{split}
\left\|\sum\limits_{\boldsymbol\alpha\in  \Lambda_{\bf p}}\frac{1}{{b_{1,\alpha_1}\cdots b_{k,\alpha_k}}}A_{(\boldsymbol\alpha)}^*A_{(\boldsymbol\alpha)}\right\|^{1/2}
&\leq \frac{1}{r_1^{p_1}\cdots r_k^{p_k}}M({\bf r})
\leq  \frac{1}{r_1^{p_1}\cdots r_k^{p_k}} \|F({\bf rW})\|\\
&\leq  C\frac{1}{r_1^{p_1-m_1}\cdots r_k^{p_k-m_k}}
\end{split}
\end{equation*}
for any $r_i>0$ and $i\in\{1,\ldots, k\}$.
Note that, if there is $s\in\{1,\ldots, k\}$ such that  $p_s>m_s$, then taking $r_s\to \infty$ we obtain $$\sum\limits_{\boldsymbol\alpha\in  \Lambda_{\bf p}}\frac{1}{{b_{1,\alpha_1}\cdots b_{k,\alpha_k}}}A_{(\boldsymbol\alpha)}^*A_{(\boldsymbol\alpha)}=0,
$$
which implies $A_{(\boldsymbol\alpha)}=0$ for any $\boldsymbol\alpha=(\alpha_1,\ldots, \alpha_k)$ with $\alpha_i\in \FF_{n_i}^+$ and  $|\alpha_i|=p_i$ and any $p_i\in \ZZ^+$, $i\neq s$. Hence, we deduce that
$$F({\bf X})=\sum\limits_{{(p_1,\ldots, p_k)\in \ZZ_+^k}\atop{p_i\leq m_i}}\sum\limits_{\boldsymbol\alpha\in  \Lambda_{\bf p}}A_{(\boldsymbol\alpha)}\otimes {\bf X}_{\boldsymbol\alpha}.
$$
The proof is complete.
\end{proof}

Define the set
$$
\Omega:=\left\{{\bf r}:=(r_1,\ldots, r_k)\in \RR_+^k: \ \left\{\left\|r_1^{2p_1}\cdots r_k^{2p_k} \sum_{\boldsymbol\alpha\in  \Lambda_{\bf p}} \frac{1}{{b_{1,\alpha_1}\cdots b_{k,\alpha_k}}}A_{(\boldsymbol\alpha)}^*A_{(\boldsymbol\alpha)}
\right\|\right\}_{{\bf p}:=(p_1,\ldots, p_k)\in \ZZ_+^k} \ \text{ is bounded}\right\}.
$$
  Given a  formal power series $\psi:=\sum\limits_{\boldsymbol\alpha\in \FF_{n_1}^+\times \cdots \times\FF_{n_k}^+} A_{(\boldsymbol\alpha)}\otimes {\bf Z}_{\boldsymbol\alpha}$ , we define
 $$
 {\bf C}_\psi(\cH):=\bigcup_{{\bf r}\in \Omega} {\bf r}{\bf D_q}(\cH)\quad \text{ and } \quad {\bf C}_\psi:=\coprod_\cH {\bf C}_\psi(\cH).
 $$
We say that ${\bf C}_\psi$ is {\it logarithmically convex} if  $\Omega$ is log-convex, i.e.
the set $$\{(\log r_1,\ldots, \log r_k): \ (r_1,\ldots, r_k)\in \Omega, r_i>0\}$$ is  convex.

\begin{proposition} Let $\psi:=\sum\limits_{\boldsymbol\alpha\in \FF_{n_1}^+\times \cdots \times\FF_{n_k}^+} A_{(\boldsymbol\alpha)}\otimes {\bf Z}_{\boldsymbol\alpha}$  be a formal power series. The following statements hold.
\begin{enumerate}
\item[(i)] $\psi$ is free holomorphic function on  ${\bf C}_\psi$
and $$\psi({\bf X})=\sum\limits_{{\bf p}:=(p_1,\ldots, p_k)\in \ZZ_+^k}\sum\limits_{\boldsymbol\alpha\in  \Lambda_{\bf p}}A_{(\boldsymbol\alpha)}\otimes {\bf X}_{\boldsymbol\alpha}, \qquad {\bf X}\in {\bf C}_\psi,
$$
where the series is  convergent in the operator norm.
\item[(ii)] ${\bf C}_\psi$  is  a logarithmically convex complete Reinhardt domain.
\end{enumerate}
\end{proposition}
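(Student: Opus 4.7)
For part (i), I would apply Theorem \ref{Abel}(i) pointwise. Fix ${\bf X}\in {\bf C}_\psi(\cH)$; by the very definition of ${\bf C}_\psi(\cH)$ there exists ${\bf r}\in \Omega$ with ${\bf X}\in {\bf r}{\bf D_q}(\cH)$. The boundedness encoded in the definition of $\Omega$ is exactly the hypothesis of Theorem \ref{Abel}(i), which then produces the convergence of $\sum_{{\bf p}\in \ZZ_+^k}\bigl\|\sum_{\boldsymbol\alpha\in\Lambda_{\bf p}} A_{(\boldsymbol\alpha)}\otimes {\bf X}_{\boldsymbol\alpha}\bigr\|$ together with uniform convergence on every ${\bf s}{\bf D_q}(\cH)^-$ with $s_i<r_i$. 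In particular the series defining $\psi({\bf X})$ converges in operator norm, so $\psi$ is a free holomorphic function on ${\bf C}_\psi$ with the claimed representation.

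For part (ii), I first treat the Reinhardt/openness structure. For ${\bf r}\in \Omega$ with all $r_i>0$, the map ${\bf Y}\mapsto {\bf rY}$ is a linear homeomorphism of $B(\cH)^{n_1}\times_c\cdots\times_c B(\cH)^{n_k}$, so by Proposition \ref{reg-poly}(i) the image ${\bf r}{\bf D_q}(\cH)$ is relatively open, and ${\bf C}_\psi(\cH)$ is open as a union of such sets. Since each ${\bf r}{\bf D_q}(\cH)$ contains $0$, the union is a connected neighborhood of $0$. To verify the complete Reinhardt property, given ${\bf X}={\bf rY}\in {\bf r}{\bf D_q}(\cH)$ and ${\bf z}\in \overline{\DD}^{n_1+\cdots+n_k}$, Proposition \ref{reg-poly}(ii) yields ${\bf zY}\in {\bf D_q}(\cH)$, hence ${\bf zX}={\bf r}({\bf zY})\in {\bf r}{\bf D_q}(\cH)\subseteq {\bf C}_\psi(\cH)$.

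The main task is the log-convexity of $\Omega$. Setting
\[
c_{\bf p}:=\Bigl\|\sum_{\boldsymbol\alpha\in \Lambda_{\bf p}}\frac{1}{b_{1,\alpha_1}\cdots b_{k,\alpha_k}}A_{(\boldsymbol\alpha)}^*A_{(\boldsymbol\alpha)}\Bigr\|,
\]
membership ${\bf r}\in \Omega$ with $r_i>0$ is equivalent to the existence of $M_{\bf r}\in\RR$ such that $2\sum_i p_i\log r_i+\log c_{\bf p}\leq M_{\bf r}$ for every ${\bf p}\in\ZZ_+^k$ (with the convention $\log 0=-\infty$, so indices with $c_{\bf p}=0$ impose no constraint). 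Given ${\bf r},{\bf s}\in \Omega$ with positive entries and $t\in[0,1]$, the point ${\bf u}:=(r_1^t s_1^{1-t},\ldots,r_k^t s_k^{1-t})$ satisfies
\[
2\sum_i p_i\log u_i+\log c_{\bf p}=t\Bigl(2\sum_i p_i\log r_i+\log c_{\bf p}\Bigr)+(1-t)\Bigl(2\sum_i p_i\log s_i+\log c_{\bf p}\Bigr)\leq tM_{\bf r}+(1-t)M_{\bf s},
\]
so ${\bf u}\in \Omega$. The image of $\{{\bf r}\in \Omega:r_i>0\}$ under coordinatewise logarithm is therefore convex, i.e.\ ${\bf C}_\psi$ is logarithmically convex. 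The only subtlety I anticipate is the bookkeeping around indices where $c_{\bf p}=0$, which the stated convention settles.
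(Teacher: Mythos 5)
Your proposal is correct and follows essentially the same route as the paper: part (i) is obtained by applying Theorem \ref{Abel}(i) pointwise using the defining boundedness condition of $\Omega$, and part (ii) combines the Reinhardt structure from Proposition \ref{reg-poly} with the multiplicative interpolation bound $u_1^{2p_1}\cdots u_k^{2p_k}c_{\bf p}\leq (r_1^{2p_1}\cdots r_k^{2p_k}c_{\bf p})^t(s_1^{2p_1}\cdots s_k^{2p_k}c_{\bf p})^{1-t}$. The only cosmetic difference is that you run the log-convexity estimate on the scalars $c_{\bf p}=\|\Gamma_{\bf p}\|$ (the scalars $r_i^{2p_i}$ factor out of the norm anyway), whereas the paper phrases the same inequality at the operator level via $\|\Gamma_{\bf p}^t\|=\|\Gamma_{\bf p}\|^t$; both are valid.
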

\begin{proof}
Item (i) is due  to Theorem \ref{Abel} and   the uniqueness of the representation for free holomorphic functions on polydomains.  To prove part (ii), note that Proposition \ref{reg-poly} implies that  ${\bf C}_\psi$  is  a  complete Reinhardt domain. It remains to show that ${\bf C}_\psi$ is logarithmically convex. To this end,   let $(r_1,\ldots, r_k)$ and
$(s_1,\ldots, s_k)$ be in $\Omega$. Then there is a constant $C>0$ such that
$$
\|r_1^{2p_1}\cdots r_k^{2p_k}\Gamma_{\bf p} \|
\leq C\quad \text{ and }\quad
\|s_1^{2p_1}\cdots s_k^{2p_k}\Gamma_{\bf p}  \|\leq C
$$
for any ${\bf p}=(p_1,\ldots, p_k)\in \ZZ_+^k$, where
 $\Gamma_{\bf p}:=\sum\limits_{\boldsymbol\alpha\in  \Lambda_{\bf p}}\frac{1}{{b_{1,\alpha_1}\cdots b_{k,\alpha_k}}}A_{(\boldsymbol\alpha)}^*A_{(\boldsymbol\alpha)}$.  Due to the spectral theorem for positive operators, for any $t\in [0,1]$,  we have
\begin{equation*}
\begin{split}
\left\|( r_1^t s_1^{1-t})^{2p_1}\cdots ( r_k^t s_k^{1-t})^{2p_k}\Gamma_{\bf p}\right\|
&=
\left\|(r_1^{2p_1}\cdots r_k^{2p_k} \Gamma_{\bf p})^t (s_1^{2p_1}\cdots s_k^{2p_k} \Gamma_{\bf p})^{1-t}\right\|\\
&\leq
\left\|(r_1^{2p_1}\cdots r_k^{2p_k} \Gamma_{\bf p})^t\right\| \left\|(s_1^{2p_1}\cdots s_k^{2p_k} \Gamma_{\bf p})^{1-t}\right\|\\
&\leq
\left\|r_1^{2p_1}\cdots r_k^{2p_k} \Gamma_{\bf p}\right\|^t \left\|s_1^{2p_1}\cdots s_k^{2p_k} \Gamma_{\bf p}\right\|^{1-t}\\
&\leq C^t C^{1-t}=C.
\end{split}
\end{equation*}
Hence,  $(r_1^t s_1^{1-t},\ldots,  r_k^t k_1^{1-t})\in \Omega$, which proves that
 ${\bf C}_\psi$  is   logarithmically convex.
The proof is complete.
\end{proof}

We remark that, due to Theorem \ref{Abel},  if $\boldsymbol\rho:=(\rho_1,\ldots, \rho_k)\notin \Omega$, then
$\sum\limits_{(p_1,\ldots, p_k)\in \ZZ_+^k}\sum\limits_{\boldsymbol\alpha\in  \Lambda_{\bf p}}A_{(\alpha)}\otimes {\bf X}_{\boldsymbol\alpha}
$
is  divergent for some ${\bf X}\in {\boldsymbol\rho}{\bf D_q}(\cH)^-$ and some Hilbert space  $\cH$.
We call the set ${\bf C}_\psi$  the {\it universal domain of convergence} of the power series $\psi$.
In what follows, we  find the largest polydomain $r{\bf D_q}$, $r>0$, which is included in the universal domain of convergence ${\bf C}_\psi$.

 \begin{theorem} \label{Hadamard}
 Let $\psi:=\sum\limits_{\boldsymbol\alpha\in \FF_{n_1}^+\times \cdots \times\FF_{n_k}^+} A_{(\boldsymbol\alpha)}\otimes {\bf Z}_{\boldsymbol\alpha}$   be a formal power series
 and define
 $\gamma\in[0,\infty]$ by setting
 $$
 \frac{1}{\gamma}:=\limsup_{{\bf p}:=(p_1,\ldots, p_k)\in \ZZ_+^k} \left\|\sum\limits_{\boldsymbol\alpha\in  \Lambda_{\bf p}}\frac{1}{{b_{1,\alpha_1}\cdots b_{k,\alpha_k}}}A_{(\boldsymbol\alpha)}^*A_{(\boldsymbol\alpha)}\right\|^{\frac{1}{2(p_1+\cdots +p_k)}}.
 $$
 Then the following statements hold.
 \begin{enumerate}
 \item[(i)]  The series
 $$
 \sum\limits_{{\bf p}\in \ZZ_+^k}\left\|\sum\limits_{\boldsymbol\alpha\in  \Lambda_{\bf p}}A_{(\alpha)}\otimes {\bf X}_{\boldsymbol\alpha}\right\|, \qquad {\bf X}\in \gamma{\bf D_q}(\cH),
 $$
 is convergent. Moreover, the convergence is uniform on $r{\bf D_q}(\cH)^-$ if $0\leq r<\gamma$.
 \item[(ii)] If $\gamma\in [0,\infty)$ and
   $s>\gamma$, then there is a Hilbert space $\cH$ and ${\bf Y}\in s{\bf D_q}(\cH)^-$  such that the series
 $$
 \sum\limits_{{\bf p}\in \ZZ_+^k}\sum\limits_{\boldsymbol\alpha\in  \Lambda_{\bf p}}A_{(\alpha)}\otimes {\bf Y}_{\boldsymbol\alpha}
 $$
 is divergent in the operator norm topology.
 \end{enumerate}
 \end{theorem}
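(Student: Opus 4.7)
The plan is to mimic the classical Cauchy--Hadamard proof, but to translate the one-variable manipulations into the noncommutative polydomain setting using the identity from Lemma \ref{Le} together with the noncommutative von Neumann inequality and the Reinhardt structure established in Proposition \ref{reg-poly}. Throughout, write $|{\bf p}|:=p_1+\cdots+p_k$ and $\Gamma_{\bf p}:=\sum_{\boldsymbol\alpha\in \Lambda_{\bf p}}\frac{1}{b_{1,\alpha_1}\cdots b_{k,\alpha_k}}A_{(\boldsymbol\alpha)}^*A_{(\boldsymbol\alpha)}$, so that $1/\gamma=\limsup_{|{\bf p}|\to\infty}\|\Gamma_{\bf p}\|^{1/(2|{\bf p}|)}$.

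For part (i), fix $0\le r<\gamma$ and choose $r'$ with $r<r'<\gamma$. By definition of $\gamma$ there exists $N$ such that $\|\Gamma_{\bf p}\|^{1/(2|{\bf p}|)}\le 1/r'$ for all ${\bf p}$ with $|{\bf p}|\ge N$. For ${\bf X}\in r{\bf D_q}(\cH)^-$, rescaling makes $\frac{1}{r}{\bf X}\in {\bf D_q}(\cH)^-$ (the case $r=0$ being trivial), so the noncommutative von Neumann inequality together with Lemma \ref{Le} gives
\begin{equation*}
\left\|\sum_{\boldsymbol\alpha\in \Lambda_{\bf p}} A_{(\boldsymbol\alpha)}\otimes {\bf X}_{\boldsymbol\alpha}\right\|
\le r^{|{\bf p}|}\left\|\sum_{\boldsymbol\alpha\in \Lambda_{\bf p}} A_{(\boldsymbol\alpha)}\otimes {\bf W}_{\boldsymbol\alpha}\right\|
= r^{|{\bf p}|}\|\Gamma_{\bf p}\|^{1/2}\le \left(\tfrac{r}{r'}\right)^{|{\bf p}|}
\end{equation*}
whenever $|{\bf p}|\ge N$. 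Since $\sum_{{\bf p}\in \ZZ_+^k} (r/r')^{|{\bf p}|} = \sum_{n\ge 0}\binom{n+k-1}{k-1}(r/r')^n<\infty$, the series in the theorem converges uniformly on $r{\bf D_q}(\cH)^-$. The identity $\gamma{\bf D_q}(\cH)=\bigcup_{0\le r<\gamma}r{\bf D_q}(\cH)^-$, which follows by rescaling from Proposition \ref{reg-poly}(ii), then yields convergence on $\gamma{\bf D_q}(\cH)$.

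For part (ii), suppose $\gamma<\infty$ and $s>\gamma$. Then $1/s<1/\gamma$, so by definition of the limsup there is a sequence $\{{\bf p}^{(n)}\}\subset \ZZ_+^k$ with $|{\bf p}^{(n)}|\to\infty$ and $\|\Gamma_{{\bf p}^{(n)}}\|^{1/(2|{\bf p}^{(n)}|)}> 1/s$, i.e.\ $s^{|{\bf p}^{(n)}|}\|\Gamma_{{\bf p}^{(n)}}\|^{1/2}>1$. Take $\cH:=\bigotimes_{i=1}^k F^2(H_{n_i})$ and ${\bf Y}:=s{\bf W}$; by the Reinhardt property of ${\bf D_q}(\cH)^-$ from Proposition \ref{reg-poly}(iii), one has ${\bf Y}\in s{\bf D_q}(\cH)^-$. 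Applying Lemma \ref{Le} directly (no von Neumann inequality is needed, since ${\bf W}$ is used as the test tuple) gives
\begin{equation*}
\left\|\sum_{\boldsymbol\alpha\in \Lambda_{{\bf p}^{(n)}}} A_{(\boldsymbol\alpha)}\otimes {\bf Y}_{\boldsymbol\alpha}\right\|
= s^{|{\bf p}^{(n)}|}\|\Gamma_{{\bf p}^{(n)}}\|^{1/2}>1
\end{equation*}
for every $n$. Thus the block terms $\sum_{\boldsymbol\alpha\in \Lambda_{\bf p}} A_{(\boldsymbol\alpha)}\otimes {\bf Y}_{\boldsymbol\alpha}$ do not tend to zero as $|{\bf p}|\to\infty$, so the series in the statement is divergent in the operator norm at ${\bf Y}$.

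The only delicate point in the argument is ensuring the multi-index limsup behaves correctly: one must verify that $1/\gamma$ is characterized both by an upper bound valid for \emph{all} sufficiently large $|{\bf p}|$ and by the existence of a subsequence saturating it. Once this standard observation is in place, the proof is a clean translation of the classical scalar Hadamard argument, with Lemma \ref{Le} playing the role of the identity $|a_n|r^n=\|a_n Z^n\|_{\partial \DD_r}$ and Proposition \ref{reg-poly} providing the exhaustion of $\gamma{\bf D_q}(\cH)$ by the closed subdomains $r{\bf D_q}(\cH)^-$.
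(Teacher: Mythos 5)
Your proposal is correct and follows essentially the same route as the paper: the noncommutative von Neumann inequality plus Lemma \ref{Le} to reduce the block norms to $r^{|{\bf p}|}\|\Gamma_{\bf p}\|^{1/2}$ for part (i), and the test tuple ${\bf Y}=s{\bf W}$ with Lemma \ref{Le} for part (ii). The only cosmetic difference is that in (ii) the paper inserts an intermediate $\rho\in(\gamma,s)$ to show the block norms exceed $(s/\rho)^{|{\bf p}|}\to\infty$, whereas you bound them below by $1$, which already suffices for divergence.
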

 \begin{proof} First, we consider the case when  $\gamma\in (0,\infty)$. Let ${\bf X}\in r{\bf D_q}(\cH)^-$ be such that $0\leq r<\gamma$ and let $\rho\in (r,\gamma)$. Then
 \begin{equation*}
 \left\|\sum\limits_{\boldsymbol\alpha\in  \Lambda_{\bf p}}\frac{1}{{b_{1,\alpha_1}\cdots b_{k,\alpha_k}}}A_{(\boldsymbol\alpha)}^*A_{(\boldsymbol\alpha)}\right\|^{\frac{1}{2(p_1+\cdots +p_k)}}<\frac{1}{\rho}
 \end{equation*}
for all but finitely many ${\bf p}:=(p_1,\ldots, p_k)\in \ZZ_+^k$. Consequently, due to the noncommutative  von Neumann   inequality \cite{Po-poisson}, we have
\begin{equation*}
\begin{split}
\left\|\sum\limits_{\boldsymbol\alpha\in  \Lambda_{\bf p}}A_{(\boldsymbol\alpha)}\otimes {\bf X}_{\boldsymbol\alpha}\right\|
&\leq
\left\|\sum\limits_{\boldsymbol\alpha\in  \Lambda_{\bf p}}A_{(\boldsymbol\alpha)}\otimes r^{p_1+\cdots + p_k} {\bf W}_{(\alpha)}\right\|\\
&=r^{p_1+\cdots p_k}\left\|\sum\limits_{\boldsymbol\alpha\in  \Lambda_{\bf p}}A_{(\boldsymbol\alpha)}^*A_{(\boldsymbol\alpha)}\otimes {\bf W}_{\boldsymbol\alpha}^*{\bf W}_{\boldsymbol\alpha}\right\|^{1/2}<\left(\frac{r}{\rho}\right)^{p_1+\cdots +p_k}
\end{split}
\end{equation*}
for all but finitely many ${\bf p}:=(p_1,\ldots, p_k)\in \ZZ_+^k$. As a consequence, item (i) holds and  implies that   the series
 $
 \sum\limits_{{\bf p}\in \ZZ_+^k}^\infty\left\|\sum\limits_{\boldsymbol\alpha\in  \Lambda_{\bf p}}A_{(\boldsymbol\alpha)}\otimes {\bf X}_{\boldsymbol\alpha}\right\|
 $
  is uniformly convergent  on $r{\bf D_q}(\cH)^-$. The case when $\gamma=\infty$ can be treated in a similar manner. We leave it to the reader.
Now, assume that $\gamma\in [0,\infty)$ and $\gamma<\rho<s$. Let ${\bf Y}:=s{\bf W}$, where ${\bf W}$ is the universal model of ${\bf D_q^-}$. As above if  ${\bf Y}\in s{\bf D_q}(\cH)^-$ then
\begin{equation}
\label{AAA}
\left\|\sum\limits_{\boldsymbol\alpha\in  \Lambda_{\bf p}}A_{(\boldsymbol\alpha)}\otimes {\bf Y}_{\boldsymbol\alpha}\right\|=
s^{p_1+\cdots +p_k}\left\|\sum\limits_{\boldsymbol\alpha\in  \Lambda_{\bf p}}A_{(\boldsymbol\alpha)}^*A_{(\boldsymbol\alpha)}\otimes {\bf W}_{\boldsymbol\alpha}^*{\bf W}_{\boldsymbol\alpha}\right\|^{1/2}.
\end{equation}
Since $\frac{1}{\rho}<\frac{1}{\gamma}$, there are infinitely many tuples
${\bf p}:=(p_1,\ldots, p_k)\in \ZZ_+^k$ such that
$$\left\|\sum\limits_{\boldsymbol\alpha\in  \Lambda_{\bf p}}A_{(\boldsymbol\alpha)}^*A_{(\boldsymbol\alpha)}\otimes {\bf W}_{\boldsymbol\alpha}^*{\bf W}_{\boldsymbol\alpha}\right\|^{\frac{1}{2(p_1+\cdots +p_k)}}>\frac{1}{\rho}.
 $$
Hence, and using relation \eqref{AAA},  we deduce that
$\|\sum\limits_{\boldsymbol\alpha\in  \Lambda_{\bf p}}A_{(\boldsymbol\alpha)}\otimes {\bf Y}_{\boldsymbol\alpha}\|>\left(\frac{s}{\rho}\right)^{p_1+\cdots+p_k}$. This shows
that the series
 $$
 \sum\limits_{{\bf p}:=(p_1,\ldots, p_k)\in \ZZ_+^k}\left\|\sum\limits_{\boldsymbol\alpha\in  \Lambda_{\bf p}}A_{(\boldsymbol\alpha)}\otimes {\bf Y}_{\boldsymbol\alpha}\right\|
 $$
 is divergent and also that   item (ii) holds. The proof is complete.
  \end{proof}

 The number $\gamma$ satisfying properties (i) and (ii) in  Theorem \ref{Hadamard} is unique and is called the {\it polydomain radius of convergence} for the power series $\psi$.

 In what follows, we set
 $\Gamma_{m}:=\{\boldsymbol\alpha=(\alpha_1,\ldots, \alpha_k)\in \FF_{n_1}^+\times\cdots \times \FF_{n_k}^+: \  |\alpha_1|+\cdots +|\alpha_k|=m\}$.

 \begin{theorem} \label{Hadamard2} Let $\psi:=\sum\limits_{\boldsymbol\alpha\in \FF_{n_1}^+\times \cdots \times\FF_{n_k}^+} A_{(\boldsymbol\alpha)}\otimes {\bf Z}_{\boldsymbol\alpha}$   be a formal power series
 and let
 $\gamma\in[0,\infty]$ be its polydomain radius of convergence. Then the following statements hold.
  \begin{enumerate}
 \item[(i)]  The series
 $$
 \sum\limits_{m=0}^\infty\left\|\sum\limits_{ \boldsymbol\alpha\in \Gamma_m}A_{(\boldsymbol\alpha)}\otimes {\bf X}_{\boldsymbol\alpha}\right\|
 $$
  is uniformly convergent  on $r{\bf D_q}(\cH)^-$ if $0\leq r<\gamma$.
 \item[(ii)]
 For any $s>\gamma$, there is ${\bf Y}\in s{\bf D_q}(\cH)^-$ such that the series
 $$
 \sum\limits_{m=0}^\infty\sum\limits_{ \boldsymbol\alpha\in \Gamma_m}A_{(\boldsymbol\alpha)}\otimes {\bf X}_{\boldsymbol\alpha}
 $$
 is divergent in the operator norm topology.
 \end{enumerate}
 \end{theorem}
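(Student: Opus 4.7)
The plan is to deduce Theorem~\ref{Hadamard2} from Theorem~\ref{Hadamard} by regrouping via $\Gamma_m=\bigsqcup_{|{\bf p}|=m}\Lambda_{\bf p}$, where $|{\bf p}|:=p_1+\cdots+p_k$, together with the elementary observation that the number of multi-indices ${\bf p}\in\ZZ_+^k$ with $|{\bf p}|=m$ equals $\binom{m+k-1}{k-1}$, which grows only polynomially in $m$. For part~(i), fix $r<\gamma$ and choose $\rho\in(r,\gamma)$. The $\limsup$ definition of $\gamma$ yields an integer $N$ such that
\begin{equation*}
\left\|\sum_{\boldsymbol\alpha\in\Lambda_{\bf p}}\frac{1}{b_{1,\alpha_1}\cdots b_{k,\alpha_k}}A_{(\boldsymbol\alpha)}^*A_{(\boldsymbol\alpha)}\right\|^{1/2}\le(1/\rho)^{|{\bf p}|}\qquad\text{whenever }|{\bf p}|\ge N.
\end{equation*}
Combining the noncommutative von Neumann inequality with Lemma~\ref{Le}, exactly as in the proof of Theorem~\ref{Hadamard}(i), gives $\|\sum_{\boldsymbol\alpha\in\Lambda_{\bf p}}A_{(\boldsymbol\alpha)}\otimes{\bf X}_{\boldsymbol\alpha}\|\le(r/\rho)^{|{\bf p}|}$ for every ${\bf X}\in r{\bf D_q}(\cH)^-$ and $|{\bf p}|\ge N$. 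Summing over ${\bf p}$ with $|{\bf p}|=m$ via the triangle inequality yields
\begin{equation*}
\left\|\sum_{\boldsymbol\alpha\in\Gamma_m}A_{(\boldsymbol\alpha)}\otimes{\bf X}_{\boldsymbol\alpha}\right\|\le\binom{m+k-1}{k-1}(r/\rho)^m\qquad (m\ge N),
\end{equation*}
and since $r/\rho<1$ the Weierstrass $M$-test delivers uniform convergence on $r{\bf D_q}(\cH)^-$.

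For part~(ii), I would take ${\bf Y}:=s{\bf W}$, where ${\bf W}$ is the universal model, so that ${\bf Y}\in s{\bf D_q}(\otimes_{i=1}^{k}F^2(H_{n_i}))^-$. Pick $\rho\in(\gamma,s)$; by the $\limsup$, infinitely many ${\bf p}$, and hence infinitely many distinct values $m=|{\bf p}|$ (each $m$ admits only finitely many ${\bf p}$), satisfy
\begin{equation*}
\left\|\sum_{\boldsymbol\alpha\in\Lambda_{\bf p}}\frac{1}{b_{1,\alpha_1}\cdots b_{k,\alpha_k}}A_{(\boldsymbol\alpha)}^*A_{(\boldsymbol\alpha)}\right\|^{1/(2|{\bf p}|)}>1/\rho.
\end{equation*}
Since the vectors $\{{\bf W}_{\boldsymbol\alpha}(1):\boldsymbol\alpha\in\Gamma_m\}$ are pairwise orthogonal in $\otimes_{i=1}^{k}F^2(H_{n_i})$ (each ${\bf W}_{\boldsymbol\alpha}(1)$ being a scalar multiple of the distinct basis tensor $e^1_{\alpha_1}\otimes\cdots\otimes e^k_{\alpha_k}$), testing on vectors $h\otimes 1$ and mimicking the computation of Lemma~\ref{Le} gives
\begin{equation*}
\left\|\sum_{\boldsymbol\alpha\in\Gamma_m}A_{(\boldsymbol\alpha)}\otimes{\bf W}_{\boldsymbol\alpha}\right\|^2\ge\left\|\sum_{\boldsymbol\alpha\in\Gamma_m}\frac{1}{b_{1,\alpha_1}\cdots b_{k,\alpha_k}}A_{(\boldsymbol\alpha)}^*A_{(\boldsymbol\alpha)}\right\|\ge\left\|\sum_{\boldsymbol\alpha\in\Lambda_{\bf p}}\frac{1}{b_{1,\alpha_1}\cdots b_{k,\alpha_k}}A_{(\boldsymbol\alpha)}^*A_{(\boldsymbol\alpha)}\right\|,
\end{equation*}
where the second inequality holds because a sum of positive operators dominates its partial sum over $\Lambda_{\bf p}\subseteq\Gamma_m$. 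Therefore, for each such $m$,
\begin{equation*}
\left\|\sum_{\boldsymbol\alpha\in\Gamma_m}A_{(\boldsymbol\alpha)}\otimes{\bf Y}_{\boldsymbol\alpha}\right\|=s^m\left\|\sum_{\boldsymbol\alpha\in\Gamma_m}A_{(\boldsymbol\alpha)}\otimes{\bf W}_{\boldsymbol\alpha}\right\|\ge(s/\rho)^m,
\end{equation*}
which tends to infinity along a subsequence; hence the general term of the series fails to tend to zero and the series diverges in the operator norm topology.

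The main obstacle is the lower bound in part~(ii), which requires extending the identity in Lemma~\ref{Le} from a single $\Lambda_{\bf p}$ to the disjoint union $\Gamma_m$. The point is that distinct $\boldsymbol\alpha\in\Gamma_m$ produce distinct orthogonal basis tensors $e^1_{\alpha_1}\otimes\cdots\otimes e^k_{\alpha_k}$ (not merely within a fixed $\Lambda_{\bf p}$), so the same Pythagoras-type inner-product computation carries through and produces a one-sided lower bound, which is precisely what is needed to force divergence. All other ingredients—the upper estimate, the polynomial-versus-geometric majorization, and the evaluation at ${\bf Y}=s{\bf W}$—are routine adaptations of the arguments already used to prove Theorem~\ref{Hadamard}.
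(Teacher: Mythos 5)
Your argument is correct, and while part (i) follows the paper's route exactly (regroup $\Gamma_m=\bigsqcup_{p_1+\cdots+p_k=m}\Lambda_{\bf p}$, apply the triangle inequality, and invoke the absolute convergence already established in Theorem \ref{Hadamard}), your part (ii) takes a genuinely different and more direct path. The paper argues by contradiction: it assumes the $\Gamma_m$-grouped series converges at $s{\bf W}$, applies it to a vector $x\otimes 1$ to extract WOT-convergence of the positive series $\sum_{\boldsymbol\alpha}s^{2(|\alpha_1|+\cdots+|\alpha_k|)}\frac{1}{b_{1,\alpha_1}\cdots b_{k,\alpha_k}}A_{(\boldsymbol\alpha)}^*A_{(\boldsymbol\alpha)}$, and then, via Lemma \ref{Le} and a binomial-coefficient count of the ${\bf p}$ with $|{\bf p}|=m$, deduces absolute convergence of the $\Lambda_{\bf p}$-grouped series on $\rho{\bf D_q}(\cH)^-$ for $\gamma<\rho<s$, contradicting Theorem \ref{Hadamard}(ii). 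You instead produce a direct quantitative lower bound: testing $\sum_{\boldsymbol\alpha\in\Gamma_m}A_{(\boldsymbol\alpha)}\otimes{\bf W}_{\boldsymbol\alpha}$ on vectors $h\otimes 1$ and using that the basis tensors $e^1_{\alpha_1}\otimes\cdots\otimes e^k_{\alpha_k}$ are pairwise orthogonal across all of $\Gamma_m$ (not just within one $\Lambda_{\bf p}$) gives $\bigl\|\sum_{\Gamma_m}A_{(\boldsymbol\alpha)}\otimes{\bf W}_{\boldsymbol\alpha}\bigr\|^2\geq\bigl\|\sum_{\Gamma_m}\frac{1}{b_{1,\alpha_1}\cdots b_{k,\alpha_k}}A_{(\boldsymbol\alpha)}^*A_{(\boldsymbol\alpha)}\bigr\|$, and positivity lets you drop down to any single $\Lambda_{\bf p}\subseteq\Gamma_m$; the $\limsup$ characterization of $\gamma$ then forces the general term at ${\bf Y}=s{\bf W}$ to exceed $(s/\rho)^m$ along a subsequence of $m$'s, so the terms do not tend to zero. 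Your approach buys a cleaner, self-contained divergence certificate (an explicit unbounded subsequence of terms) and avoids the slightly delicate back-and-forth between the two groupings in the paper's contradiction argument; the paper's version, on the other hand, recycles Theorem \ref{Hadamard}(ii) as a black box and so requires no new norm computation beyond Lemma \ref{Le}. Both are valid; the one small point worth making explicit in your write-up is the justification (which you do state) that infinitely many ${\bf p}$ with the $\limsup$ property yield infinitely many distinct values of $m=|{\bf p}|$, since each $m$ carries only finitely many ${\bf p}$.
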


 \begin{proof}  Since
 $$
 \sum\limits_{m=0}^\infty\left\|\sum\limits_{ \boldsymbol\alpha\in \Gamma_m}A_{(\boldsymbol\alpha)}\otimes {\bf X}_{\boldsymbol\alpha}\right\|
 \leq
 \sum_{m=0}^\infty  \sum_{{\bf p}:={(p_1,\ldots, p_k)\in\ZZ_+^k}\atop{p_1+\cdots +p_k=m}}
 \left\|\sum\limits_{\boldsymbol\alpha\in  \Lambda_{\bf p}}A_{(\boldsymbol\alpha)}\otimes {\bf X}_{\boldsymbol\alpha}\right\|,
 $$
Theorem \ref{Hadamard} implies  that  item (i) holds.   To   prove   item (ii) is enough to show that, under the condition $\gamma<\rho<s$,
$$
 \sum\limits_{m=0}^\infty\sum\limits_{ \boldsymbol\alpha\in \Gamma_m}A_{(\boldsymbol\alpha)}\otimes s^m{\bf W}_{\boldsymbol\alpha}
 $$
 is divergent in the operator norm topology. Assume that the series above is convergent  and apply it  to the vector $x\otimes 1$, where  $x\in \cK$. Consequently,
$ \sum\limits_{m=0}^\infty\sum\limits_{ \boldsymbol\alpha\in \Gamma_m}A_{(\boldsymbol\alpha)}x\otimes s^m\frac{1}{\sqrt{b_{1,\alpha_1}\cdots b_{k,\alpha_k}}} e_{\boldsymbol\alpha}
$
is in the Hilbert space $\cK\otimes\bigotimes_{i=1}^k F^2(H_{n_i})$. Since $\{e_{\boldsymbol\alpha}\}_{\boldsymbol\alpha\in \FF_{n_1}^+\times\cdots \times  \FF_{n_k}^+}$ is an orthonormal basis for $\bigotimes_{i=1}^k F^2(H_{n_i})$, we deduce that  the series
$\sum_{\boldsymbol\alpha\in \FF_{n_1}^+\times\cdots \times  \FF_{n_k}^+} s^{2(|\alpha_1|+\cdots +|\alpha_k|)}\frac{1}{{b_{1,\alpha_1}\cdots b_{k,\alpha_k}}}A_{(\boldsymbol\alpha)}^*A_{(\boldsymbol\alpha)}$ is WOT-convergent.
For each  $r\in [0,1)$, Lemma \ref{Le}  implies
\begin{equation*}
 \begin{split}
 &\sum_{m=0}^\infty r^m\sum_{{(p_1,\ldots, p_k)\in\ZZ_+^k}\atop{p_1+\cdots +p_k=m}}
 \left\|\sum\limits_{{ \beta_i\in \FF_{n_i}^+, |\beta_i|=p_i}\atop{i\in \{1,\ldots, k\} }}A_{(\boldsymbol\beta)}\otimes s^{p_1+\cdots +p_k} {\bf W}_{\boldsymbol\beta}\right\| \\
 &\leq
 \sum_{m=0}^\infty r^m\sum_{{(p_1,\ldots, p_k)\in\ZZ_+^k}\atop{p_1+\cdots +p_k=m}}
 \left\|\sum\limits_{{ \beta_i\in \FF_{n_i}^+, |\beta_i|=p_i}\atop{i\in \{1,\ldots, k\} }}s^{2(|\beta_1|+\cdots +|\beta_k|)}\frac{1}{{b_{1,\beta_1}\cdots b_{k,\beta_k}}}A_{(\boldsymbol\beta)}^* A_{(\boldsymbol\beta)} \right\|^{1/2}\\&\qquad \times
 \left\|\sum\limits_{{ \beta_i\in \FF_{n_i}^+, |\beta_i|=p_i}\atop{i\in \{1,\ldots, k\} }}{{b_{1,\beta_1}\cdots b_{k,\beta_k}}}{\bf W}_{\boldsymbol\beta}^* {\bf W}_{\boldsymbol\beta} \right\|^{1/2}\\
 &\leq
 \sum_{m=0}^\infty r^m \left(\begin{matrix} m+k-1\\k-1\end{matrix}\right)
 \left\|\sum\limits_{ \boldsymbol\beta=(\beta_1,\ldots, \beta_k)\in \FF_{n_1}^+\times\cdots \times  \FF_{n_k}^+}s^{2(|\beta_1|+\cdots +|\beta_k|)}\frac{1}{{b_{1,\beta_1}\cdots b_{k,\beta_k}}}A_{(\boldsymbol\beta)}^* A_{(\boldsymbol\beta)} \right\|^{1/2}.
\end{split}
 \end{equation*}
Since the latter series is convergent for any $r\in [0,1)$, we deduce that
$$
\sum_{m=0}^\infty r^m\sum_{{(p_1,\ldots, p_k)\in\ZZ_+^k}\atop{p_1+\cdots +p_k=m}}
 \left\|\sum\limits_{{ \beta_i\in \FF_{n_i}^+, |\beta_i|=p_i}\atop{i\in \{1,\ldots, k\} }}A_{(\boldsymbol\beta)}\otimes s^{p_1+\cdots +p_k} {\bf W}_{\boldsymbol\beta}\right\|<\infty,
 $$
which implies
$$
\sum_{m=0}^\infty  \sum_{{(p_1,\ldots, p_k)\in\ZZ_+^k}\atop{p_1+\cdots +p_k=m}}
 \left\|\sum\limits_{{ \beta_i\in \FF_{n_i}^+, |\beta_i|=p_i}\atop{i\in \{1,\ldots, k\} }}A_{(\boldsymbol\beta)}\otimes   {\bf X}_{\boldsymbol\beta}\right\|<\infty
 $$
for any ${\bf X}\in \rho{\bf D_q}(\cH)^-$, where $\rho\in (\gamma, s)$, which contradicts  Theorem \ref{Hadamard}.
Therefore, item (ii) holds and the proof is complete.
\end{proof}

An interesting consequence of the proofs of  Theorem \ref{Hadamard} and Theorem \ref{Hadamard2} is  the following result.

 \begin{corollary}  The  polydomain radius of convergence of a power series $\varphi:=\sum\limits_{\boldsymbol\alpha\in \FF_{n_1}^+\times \cdots \times\FF_{n_k}^+} A_{(\boldsymbol\alpha)}\otimes {\bf Z}_{\boldsymbol\alpha}$   satisfies the relation
 \begin{equation*}\begin{split}
 \gamma &=\sup\left\{r\geq 0:\
 \sum\limits_{m=0}^\infty\sum\limits_{  {|\alpha_1|+\cdots +|\alpha_k|=m}\atop {\alpha_i\in \FF_{n_i}^+}}A_{(\boldsymbol\alpha)}\otimes r^q{\bf W}_{\boldsymbol\alpha} \ \text{is convergent in the operator norm}\right\}\\
 &=
 \sup\left\{r\geq 0:\
 \sum\limits_{(p_1,\ldots, p_k)\in \ZZ_+^k}\sum\limits_{{ \alpha_i\in \FF_{n_i}^+, |\alpha_i|=p_i}\atop{i\in \{1,\ldots, k\} }}A_{(\alpha)}\otimes  r^{p_1+\cdots +p_k}{\bf W}_{\boldsymbol\alpha} \ \text{is convergent in the operator norm}\right\}.
 \end{split}
 \end{equation*}
 \end{corollary}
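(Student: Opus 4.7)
The plan is to show that both suprema, call them $\gamma_1$ (the first) and $\gamma_2$ (the second), equal $\gamma$ by applying Theorems \ref{Hadamard} and \ref{Hadamard2} at the single test point ${\bf X}=r{\bf W}$, where ${\bf W}$ is the universal model of ${\bf D_q^-}$. Since ${\bf W}$ is pure in ${\bf D_q^-}(\bigotimes_{i=1}^{k}F^{2}(H_{n_i}))$, the point $r{\bf W}$ lies in $r{\bf D_q}(\cH)^{-}$ for $\cH=\bigotimes_{i=1}^{k}F^{2}(H_{n_i})$, so both theorems apply with this input.

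For the inequalities $\gamma_1,\gamma_2\geq \gamma$, I fix $r\in[0,\gamma)$ and use the identity $(r{\bf W})_{\boldsymbol\alpha}=r^{|\alpha_1|+\cdots+|\alpha_k|}{\bf W}_{\boldsymbol\alpha}$. Theorem \ref{Hadamard}(i) then yields absolute convergence of the second series at $r{\bf W}$, while Theorem \ref{Hadamard2}(i) yields absolute (and hence operator-norm) convergence of the first. Letting $r\to\gamma^{-}$ gives $\gamma_1\geq\gamma$ and $\gamma_2\geq\gamma$.

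For the reverse inequalities $\gamma_1,\gamma_2\leq\gamma$, I fix any $r>\gamma$ and show that neither series can converge at $r{\bf W}$. Divergence of the second series at $r{\bf W}$ is essentially the content of Theorem \ref{Hadamard}(ii), whose proof actually selects the witness ${\bf Y}=s{\bf W}$; hence $\gamma_2\leq\gamma$. For the first series I would repeat the argument used inside the proof of Theorem \ref{Hadamard2}(ii): assuming $\sum_{m}\sum_{\boldsymbol\alpha\in\Gamma_m}A_{(\boldsymbol\alpha)}\otimes r^{m}{\bf W}_{\boldsymbol\alpha}$ converges in operator norm, evaluate it on a vector $x\otimes 1$, exploit the orthonormal basis $\{e_{\boldsymbol\alpha}\}$ of $\bigotimes_{i}F^{2}(H_{n_i})$ to extract WOT-convergence of $\sum_{\boldsymbol\alpha}r^{2(|\alpha_1|+\cdots+|\alpha_k|)}\frac{1}{b_{1,\alpha_1}\cdots b_{k,\alpha_k}}A_{(\boldsymbol\alpha)}^{*}A_{(\boldsymbol\alpha)}$, and then combine this with Lemma \ref{Le} and the binomial-grouping estimate $\binom{m+k-1}{k-1}$ to dominate the absolute series of Theorem \ref{Hadamard}(i) on $\rho{\bf D_q}(\cH)^{-}$ for some $\rho\in(\gamma,r)$, contradicting Theorem \ref{Hadamard}(ii). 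This yields $\gamma_1\leq\gamma$.

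The only delicate point is in the upper-bound step: the corollary hypothesizes only operator-norm convergence of the series, not absolute convergence of the norm series, so one must first extract from mere convergence at $r{\bf W}$ enough quantitative control on the blocks $\sum_{\boldsymbol\alpha\in\Lambda_{\bf p}}\frac{1}{b_{1,\alpha_1}\cdots b_{k,\alpha_k}}A_{(\boldsymbol\alpha)}^{*}A_{(\boldsymbol\alpha)}$ to feed back into the Hadamard formula defining $\gamma$. The $x\otimes 1$-evaluation trick of Theorem \ref{Hadamard2}(ii) handles this cleanly, so the corollary reduces to reading off what those two theorems already established at the special point $r{\bf W}$, without requiring any genuinely new estimate.
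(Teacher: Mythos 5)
Your proposal is correct and follows exactly the route the paper intends: the corollary is stated there as a consequence of the \emph{proofs} of Theorem \ref{Hadamard} and Theorem \ref{Hadamard2} (no separate argument is given), and those proofs establish convergence on $r{\bf D_q}(\cH)^-$ for $r<\gamma$ while exhibiting the divergence witness precisely at $s{\bf W}$ for $s>\gamma$, which is what your evaluation at the single test point $r{\bf W}$ reads off. Your remark about needing operator-norm divergence (rather than divergence of the norm series) for the upper bounds is the right delicate point, and it is indeed handled by the $x\otimes 1$ evaluation already present in the proof of Theorem \ref{Hadamard2}(ii).
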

We  also have the following characterization for free holomorphic functions on polydomains.

\begin{corollary} Let   ${\bf W} $
    be the universal model associated with the abstract regular polydomain
  $ {\bf D_q}$. A formal power series $\varphi:=\sum\limits_{\boldsymbol\alpha\in \FF_{n_1}^+\times \cdots \times\FF_{n_k}^+} A_{(\boldsymbol\alpha)}\otimes {\bf Z}_{\boldsymbol\alpha}$ is a  free holomorphic function (with coefficients in $B(\cK)$) on the
abstract  polydomain
$\boldsymbol\rho{\bf D_q}$, where
  $\boldsymbol\rho=(\rho_1,\ldots,\rho_k)$,  $\rho_i>0$, if and only if  the series
$$
\sum_{m=0}^\infty \sum_{{\boldsymbol\alpha\in \FF_{n_1}^+\times \cdots \times\FF_{n_k}^+ }\atop {|\alpha_1|+\cdots +|\alpha_k|=m}} A_{(\alpha)} \otimes  s_1^{|\alpha_1|}\cdots s_k^{|\alpha_k|}{\bf W}_{\boldsymbol\alpha}
$$
is convergent in the operator norm topology for any $s_i\in [0,\rho_i)$.
 Moreover, the set $Hol({\bf \boldsymbol\rho D_q})$  of all free holomorphic functions with scalar coefficients on  ${\bf \boldsymbol\rho D_q}$ is an algebra.
\end{corollary}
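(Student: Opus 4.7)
The plan is to deduce this characterization from Corollary \ref{Cara}, which already identifies free holomorphicity of $\varphi$ on $\boldsymbol\rho{\bf D_q}$ with the summability condition $\sum_{{\bf p}\in\ZZ_+^k} T_{\bf p}^{({\bf s})}<\infty$ for every $s_i\in[0,\rho_i)$, where I abbreviate
\begin{equation*}
T_{\bf p}^{({\bf s})}:=\left\|\sum_{\boldsymbol\alpha\in\Lambda_{\bf p}} A_{(\boldsymbol\alpha)}\otimes s_1^{p_1}\cdots s_k^{p_k}{\bf W}_{\boldsymbol\alpha}\right\|,\qquad h_m^{({\bf s})}:=\sum_{\boldsymbol\alpha\in\Gamma_m} A_{(\boldsymbol\alpha)}\otimes s_1^{|\alpha_1|}\cdots s_k^{|\alpha_k|}{\bf W}_{\boldsymbol\alpha}.
\end{equation*}
The task is thus to compare convergence of $\sum_{\bf p} T_{\bf p}^{({\bf s})}$ with that of $\sum_m h_m^{({\bf s})}$. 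The ``only if'' direction is immediate from the triangle inequality $\|h_m^{({\bf s})}\|\leq\sum_{p_1+\cdots+p_k=m} T_{\bf p}^{({\bf s})}$, giving $\sum_m\|h_m^{({\bf s})}\|\leq\sum_{\bf p} T_{\bf p}^{({\bf s})}<\infty$ and in particular operator-norm convergence of $\sum_m h_m^{({\bf s})}$.

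For the converse, assume $\sum_m h_m^{({\bf s}')}$ converges in operator norm for every $s_i'\in[0,\rho_i)$; then $\|h_m^{({\bf s}')}\|\to 0$ and the sequence is bounded by some $K>0$. The key step is the pointwise inequality $T_{\bf p}^{({\bf s}')}\leq\|h_m^{({\bf s}')}\|$ whenever $p_1+\cdots+p_k=m$. Applying the orthogonal-range identity of Lemma \ref{Le} to $\{{\bf W}_{\boldsymbol\alpha}:\boldsymbol\alpha\in\Gamma_m\}$,
\begin{equation*}
\|h_m^{({\bf s}')}\|^2=\left\|\sum_{p_1+\cdots+p_k=m} s_1'^{2p_1}\cdots s_k'^{2p_k}\sum_{\boldsymbol\alpha\in\Lambda_{\bf p}} A_{(\boldsymbol\alpha)}^*A_{(\boldsymbol\alpha)}\otimes{\bf W}_{\boldsymbol\alpha}^*{\bf W}_{\boldsymbol\alpha}\right\|,
\end{equation*}
and since each inner summand is positive, dropping all but the ${\bf p}$-th together with Lemma \ref{Le} applied within $\Lambda_{\bf p}$ yields $(T_{\bf p}^{({\bf s}')})^2\leq\|h_m^{({\bf s}')}\|^2$. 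Fixing $s_i\in[0,\rho_i)$ and choosing $s_i'\in(s_i,\rho_i)$, rescaling gives $T_{\bf p}^{({\bf s})}=\prod_i(s_i/s_i')^{p_i}T_{\bf p}^{({\bf s}')}\leq K\prod_i(s_i/s_i')^{p_i}$, so
\begin{equation*}
\sum_{{\bf p}\in\ZZ_+^k} T_{\bf p}^{({\bf s})}\leq K\prod_{i=1}^k\frac{1}{1-s_i/s_i'}<\infty,
\end{equation*}
and Corollary \ref{Cara} delivers free holomorphicity.

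For the algebra claim, linearity is trivial; for products, write $f=\sum a_{\boldsymbol\alpha}{\bf Z}_{\boldsymbol\alpha}$ and $g=\sum b_{\boldsymbol\beta}{\bf Z}_{\boldsymbol\beta}$. The partial commutativity of the $Z_{i,j}$'s forces ${\bf Z}_{\boldsymbol\alpha}{\bf Z}_{\boldsymbol\beta}={\bf Z}_{\boldsymbol\alpha\boldsymbol\beta}$ and similarly ${\bf W}_{\boldsymbol\alpha}{\bf W}_{\boldsymbol\beta}={\bf W}_{\boldsymbol\alpha\boldsymbol\beta}$, so the $m$-homogeneous piece of $fg$ evaluated at ${\bf s}{\bf W}$ is the Cauchy convolution $h_m^{fg,({\bf s})}=\sum_{m_1+m_2=m} h_{m_1}^{f,({\bf s})} h_{m_2}^{g,({\bf s})}$. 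Since the ``only if'' direction applied to $f$ and $g$ gives $\sum_j\|h_j^{f,({\bf s})}\|<\infty$ and $\sum_\ell\|h_\ell^{g,({\bf s})}\|<\infty$, we obtain $\sum_m\|h_m^{fg,({\bf s})}\|\leq\bigl(\sum_j\|h_j^{f,({\bf s})}\|\bigr)\bigl(\sum_\ell\|h_\ell^{g,({\bf s})}\|\bigr)<\infty$, and the criterion is met by $fg$. The main obstacle is the positivity-based bound $T_{\bf p}^{({\bf s}')}\leq\|h_m^{({\bf s}')}\|$; once that is secured, the remainder is geometric-series arithmetic and the standard Cauchy-product estimate.
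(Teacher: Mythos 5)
Your overall architecture is sound: reduce to Corollary \ref{Cara}, get the ``only if'' direction from the triangle inequality, and handle the algebra claim by the Cauchy product together with the absolute convergence supplied by the ``only if'' direction (the identities ${\bf Z}_{\boldsymbol\alpha}{\bf Z}_{\boldsymbol\beta}={\bf Z}_{\boldsymbol\alpha\boldsymbol\beta}$ and the additivity of total degree are correct). The paper itself leaves this corollary unproved, so the comparison is only with what its earlier results actually support --- and there your converse direction has a genuine gap. You claim that the family $\{{\bf W}_{\boldsymbol\alpha}:\boldsymbol\alpha\in\Gamma_m\}$ has orthogonal ranges and deduce the identity $\|h_m^{({\bf s}')}\|^2=\bigl\|\sum_{p_1+\cdots+p_k=m}s_1'^{2p_1}\cdots s_k'^{2p_k}\sum_{\boldsymbol\alpha\in\Lambda_{\bf p}}A_{(\boldsymbol\alpha)}^*A_{(\boldsymbol\alpha)}\otimes{\bf W}_{\boldsymbol\alpha}^*{\bf W}_{\boldsymbol\alpha}\bigr\|$. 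Lemma \ref{Le} asserts orthogonality of ranges only within a single $\Lambda_{\bf p}$ (fixed multidegree), and the extension to $\Gamma_m$ (fixed total degree) is false for $k\geq 2$: take $k=2$, $n_1=n_2=1$, so that ${\bf W}_{1,1}^2$, ${\bf W}_{1,1}{\bf W}_{2,1}$, ${\bf W}_{2,1}^2$ all belong to $\Gamma_2$, yet $({\bf W}_{1,1}^2)^*({\bf W}_{1,1}{\bf W}_{2,1})={\bf W}_{1,1}^*{\bf W}_{2,1}\neq 0$, so the ranges overlap and the cross terms in $h_m^*h_m$ do not vanish. (In this example, with all coefficients equal to $1$ and $s_1'=s_2'=1$, your formula would give $\|h_2\|^2=3$ while in fact $\|h_2\|^2\geq 5$.)

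The inequality you actually need, $T_{\bf p}^{({\bf s}')}\leq\|h_m^{({\bf s}')}\|$ for $p_1+\cdots+p_k=m$, is nevertheless true, so the gap is repairable; but it requires a different argument. One option is to apply $h_m^{({\bf s}')}$ to vectors $h\otimes 1$ and compress to the graded component $\cK\otimes H_{n_1}^{\otimes p_1}\otimes\cdots\otimes H_{n_k}^{\otimes p_k}$: these components \emph{are} mutually orthogonal for distinct ${\bf p}$, and the norm of the ${\bf p}$-th component equals $s_1'^{2p_1}\cdots s_k'^{2p_k}\bigl\langle\sum_{\boldsymbol\alpha\in\Lambda_{\bf p}}\frac{1}{b_{1,\alpha_1}\cdots b_{k,\alpha_k}}A_{(\boldsymbol\alpha)}^*A_{(\boldsymbol\alpha)}h,h\bigr\rangle$, which after taking the supremum over $\|h\|\leq 1$ and invoking Lemma \ref{Le} gives $(T_{\bf p}^{({\bf s}')})^2\leq\|h_m^{({\bf s}')}\|^2$. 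Alternatively, average $e^{-i{\bf p}\cdot\boldsymbol\theta}\,(I\otimes U_{\boldsymbol\theta})h_m^{({\bf s}')}(I\otimes U_{\boldsymbol\theta})^*$ over the gauge group $\TT^k$ (where $U_{\boldsymbol\theta}$ multiplies $e^1_{\beta_1}\otimes\cdots\otimes e^k_{\beta_k}$ by $e^{i(|\beta_1|\theta_1+\cdots+|\beta_k|\theta_k)}$) to isolate the $\Lambda_{\bf p}$-part without increasing the norm. With either repair, the rest of your argument --- the rescaling by $\prod_i(s_i/s_i')^{p_i}$, the geometric series, and the algebra statement --- goes through.
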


\bigskip

\section{Maximum principle and Schwarz lemma on noncommutative polydomains}

In this section we  prove a maximum principle  and  Schwarz type lemma for free holomorphic functions on regular polydomains.

Let
$H^\infty({\bf D_q})$  denote the set of  all
elements $\varphi$ in $Hol({\bf D_q})$     such
that
$$\|\varphi\|_\infty:= \sup \|\varphi({\bf X})\|<\infty,
$$
where the supremum is taken over all   $ {\bf X}\in {\bf D_q}(\cH)$ and any Hilbert space
$\cH$. One can  show that $H^\infty({\bf D_q})$ is a Banach algebra under pointwise multiplication and the
norm $\|\cdot \|_\infty$.
For each $p\in \NN$, we define the norms $\|\cdot
\|_p:M_{p\times p}\left(H^\infty({\bf D_q})\right)\to
[0,\infty)$ by setting
$$
\|[\varphi_{st}]_{p\times p}\|_p:= \sup \|[\varphi_{st}({\bf X})]_{p\times p}\|,
$$
where the supremum is taken over all  $ {\bf X}\in {\bf D_q}(\cH)$ and any Hilbert space
$\cH$.
It is easy to see that the norms  $\|\cdot\|_p$, $p\in \NN$,
determine  an operator space structure  on $H^\infty({\bf D_q})$,
 in the sense of Ruan (\cite{Pa-book}, \cite{Pi-book}).

In \cite{Po-Berezin-poly},
we identified the  noncommutative algebra
${\bf F}^\infty({\bf D_q})$ with the  Hardy subalgebra $H^\infty({\bf D_q})$ of   bounded free holomorphic functions  on
${\bf D_q}$ with scalar coefficients.
More precisely, we proved that  the map
$ \Phi:H^\infty({\bf D_q})\to {\bf F}^\infty({\bf D_q}) $ defined by
$
\Phi\left(\varphi\right):=\text{\rm SOT-}\lim_{r\to 1}\varphi(r{\bf W}),
$
is a completely isometric isomorphism of operator algebras, where
$\varphi(r{\bf W}):=\sum_{q=0}^\infty \sum_{{\boldsymbol\alpha\in \FF_{n_1}^+\times \cdots \times\FF_{n_k}^+ }\atop {|\alpha_1|+\cdots +|\alpha_k|=q}} r^q a_{(\boldsymbol\alpha)} {\bf W}_{\boldsymbol\alpha}$ and the convergence of the series is in the operator norm topology.
Moreover, if  $\varphi$
is  a free holomorphic function on the abstract  polydomain ${\bf D_q}$, then the following statements are equivalent:
 \begin{enumerate}
 \item[(i)]$\varphi\in H^\infty({\bf D_q})$;
\item[(ii)] $\sup\limits_{0\leq r<1}\|\varphi(r{\bf W})\|<\infty$;
\item[(iii)]
there exists $\psi\in {\bf F}^\infty ({\bf D_q})$ such that  $\varphi({\bf X})={\boldsymbol\cB}_{\bf X}[\psi]$ for ${\bf X}\in {\bf D_q}(\cH)$, where ${\boldsymbol\cB}_{\bf X}$ is the  noncommutative Berezin
transform  associated with the abstract   polydomain ${\bf D_q}$.
\end{enumerate}
 Moreover, $\psi$ is uniquely determined by $\varphi$, namely,
$\psi=\text{\rm SOT-}\lim_{r\to 1}\varphi(r{\bf W})$
and
\begin{equation*}
\|\psi\|=\sup_{0\leq r<1}\|\varphi(r{\bf W})\|=
\lim_{r\to 1}\|\varphi(r{\bf W})\|=\|\varphi\|_\infty.
\end{equation*}

We  denote by  $A({\bf D_q})$   the set of all  elements $g$
  in $Hol({\bf D_q})$   such that the mapping
$${\bf D_q}(\cH)\ni {\bf X}\mapsto
g({\bf X})\in B(\cH)$$
 has a continuous extension to  $[{\bf D_q}(\cH)]^-$ for any Hilbert space $\cH$. One can show that  $A({\bf D_q})$ is a  Banach algebra under pointwise
multiplication and the norm $\|\cdot \|_\infty$, and it has an operator space structure under the norms $\|\cdot \|_p$, $p\in \NN$.  Moreover, we can
identify the polydomain algebra $\boldsymbol\cA ({\bf D_q})$ with the subalgebra
 $A({\bf D_q})$.  We proved in \cite{Po-Berezin-poly} that  the map
$ \Phi:A({\bf D_q})\to \boldsymbol\cA({\bf D_q}) $
 defined by
$
\Phi\left(g\right):=\lim_{r\to 1}g(r{\bf W})$,  in the  norm topology,
is a completely isometric isomorphism of operator algebras.
Moreover, if  $g$
is  a free holomorphic function on the abstract  polydomain ${\bf D_q}$, then the following statements are equivalent:
 \begin{enumerate}
 \item[(i)]$g\in A({\bf D_q})$;
\item[(ii)] $g(r{\bf W}):=\sum_{m=0}^\infty \sum_{{\boldsymbol\alpha\in \FF_{n_1}^+\times \cdots \times\FF_{n_k}^+ }\atop {|\alpha_1|+\cdots +|\alpha_k|=m}} r^q  a_{(\boldsymbol\alpha)} {\bf W}_{\boldsymbol\alpha}$ is convergent in the  norm topology  as $r\to 1$;
\item[(iii)]
there exists $\varphi\in \boldsymbol\cA({\bf D_q})$ such that $g({\bf X})={\boldsymbol\cB}_{\bf X}[\varphi]$ for ${\bf X}\in {\bf D_q}(\cH)$, where ${\boldsymbol\cB}_{\bf X}$ is the  noncommutative Berezin
transform  associated with the abstract  polydomain ${\bf D_q}$.
\end{enumerate}
Moreover, $\varphi$ is uniquely determined by $g$, namely,
$\varphi=\lim_{r\to 1}g(r{\bf W})$
and
\begin{equation*}
\|\varphi\|=\sup_{0\leq r<1}\|g(r{\bf W})\|=
\lim_{r\to 1}\|g(r{\bf W})\|=\|g\|_\infty.
\end{equation*}

\begin{proposition}\label{pro}
Let  $G=\sum\limits_{\boldsymbol\alpha\in \FF_{n_1}^+\times \cdots \times\FF_{n_k}^+} c_{(\boldsymbol\alpha)} {\bf Z}_{\boldsymbol\alpha}$  be a  free holomorphic function on the
 polydomain
${\bf D_q}$.
\begin{enumerate}
\item[(i)] If $0<r_1<r_2<1$, then $r_1{\bf D_q^-}\subset
r_2{\bf D_q}\subset{\bf D_q}$ and
$$
\|G(r_1{\bf W})\|\leq \|G(r_2{\bf W})\|.
$$
\item[(ii)] If $0<r<1$, then the map \ $G:r{\bf D_q}(\cH)^-\to B(\cH)$
defined by
$$G({\bf X}):=\sum_{m=0}^\infty \sum_{{\boldsymbol\alpha\in \FF_{n_1}^+\times \cdots \times\FF_{n_k}^+ }\atop {|\alpha_1|+\cdots +|\alpha_k|=m}}   c_{(\boldsymbol\alpha)} {\bf X}_{\boldsymbol\alpha},\qquad {\bf X} \in r{\bf D_q}(\cH)^-,
$$
is continuous and \ $\|G({\bf X})\|\leq \|G(r{\bf W})\|$ for any ${\bf X} \in r{\bf D_q}(\cH)^-$.  Moreover, the series defining $G$  converges uniformly on
$r{\bf D_q}(\cH)^-$ in the operator norm topology.
\end{enumerate}
\end{proposition}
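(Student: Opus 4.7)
The plan is to prove part (ii) first, using the noncommutative von Neumann inequality together with absolute convergence of the diagonal series $\sum_m \|S_m(r{\bf W})\|$ supplied by the assumption $G\in Hol({\bf D_q})$, and then to deduce part (i) from part (ii) and the structural description of $r{\bf D_q}(\cH)^-$ recorded in Proposition \ref{reg-poly}.

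For part (ii), set $S_m({\bf X}):=\sum_{\boldsymbol\alpha\in\Gamma_m} c_{(\boldsymbol\alpha)} {\bf X}_{\boldsymbol\alpha}$. Any ${\bf X}\in r{\bf D_q}(\cH)^-$ has the form ${\bf X}=r{\bf Y}$ with ${\bf Y}\in {\bf D_q}(\cH)^-={\bf D_q^-}(\cH)$ by Proposition \ref{reg-poly}(i), so $S_m({\bf X})=r^m S_m({\bf Y})$ and the noncommutative von Neumann inequality applied to the polynomial $S_m$ yields
$$
\|S_m({\bf X})\|\leq r^m\|S_m({\bf W})\|=\|S_m(r{\bf W})\|.
$$
Because $G$ is free holomorphic on ${\bf D_q}$, the Corollary following Theorem \ref{Hadamard2} (taken with $s_1=\cdots=s_k=r$) guarantees $\sum_{m\geq 0}\|S_m(r{\bf W})\|<\infty$. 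The Weierstrass M-test then forces uniform convergence of $\sum_m S_m$ on $r{\bf D_q}(\cH)^-$, and since each $S_m$ is a polynomial, $G$ is continuous there as a uniform limit of continuous functions. Applying the same block inequality to the partial sums $G_N=\sum_{m=0}^N S_m$ gives $\|G_N({\bf X})\|\leq \|G_N(r{\bf W})\|$, and letting $N\to\infty$ yields $\|G({\bf X})\|\leq \|G(r{\bf W})\|$.

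For part (i), the inclusion $r_1{\bf D_q^-}\subset r_2{\bf D_q}$ comes straight from Proposition \ref{reg-poly}: any ${\bf X}\in r_1{\bf D_q^-}(\cH)$ may be written as $r_2\cdot(r_1/r_2){\bf Y}$ with ${\bf Y}\in {\bf D_q^-}(\cH)$, and since $r_1/r_2\in[0,1)$, the identity ${\bf D_q}(\cH)=\bigcup_{0\leq s<1} s{\bf D_q}(\cH)^-$ in Proposition \ref{reg-poly}(ii) places $(r_1/r_2){\bf Y}$ in ${\bf D_q}(\cH)$; the inclusion $r_2{\bf D_q}\subset {\bf D_q}$ is the complete Reinhardt property. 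For the norm estimate I would take $\cH=\bigotimes_{i=1}^k F^2(H_{n_i})$ and observe that $r_1{\bf W}\in r_1{\bf D_q^-}(\cH)\subset r_2{\bf D_q}(\cH)\subset r_2{\bf D_q}(\cH)^-$; part (ii) applied with $r=r_2$ and ${\bf X}=r_1{\bf W}$ then delivers $\|G(r_1{\bf W})\|\leq \|G(r_2{\bf W})\|$.

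The only delicate point is to align the noncommutative von Neumann inequality with the grouping of terms by total degree $m$ rather than by ${\bf p}\in\ZZ_+^k$, which is why the Corollary following Theorem \ref{Hadamard2} is the correct convergence input instead of Corollary \ref{Cara}. Beyond that, the argument reduces to a standard Weierstrass M-test combined with the complete Reinhardt structure already extracted in Proposition \ref{reg-poly}.
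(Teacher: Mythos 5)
Your argument is correct and follows essentially the same route as the paper: both rest on the noncommutative von Neumann inequality applied after rescaling ${\bf X}=r{\bf Y}$ with ${\bf Y}\in{\bf D_q^-}(\cH)$, together with the summability of the homogeneous blocks $\|S_m(r{\bf W})\|$ and the structure of $r{\bf D_q}(\cH)^-$ from Proposition \ref{reg-poly}. The only cosmetic differences are that you prove (ii) first and deduce the monotonicity in (i) from it (the paper gets (i) directly by applying von Neumann to $G(r_2{\bf W})\in\boldsymbol\cA({\bf D_q})$ at $r=r_1/r_2$), and that you invoke von Neumann only for polynomials and partial sums rather than for the limit element of the polydomain algebra; neither change affects the substance.
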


\begin{proof} If $0<r_1<r_2<1$, then the inclusions  $r_1{\bf D_q^-}\subset
r_2{\bf D_q}\subset{\bf D_q}$  are due to Proposition \ref{reg-poly}.
Since $\varphi({\bf W}):=\sum_{m=0}^\infty \sum_{{\boldsymbol\alpha\in \FF_{n_1}^+\times \cdots \times\FF_{n_k}^+ }\atop {|\alpha_1|+\cdots +|\alpha_k|=m}}  c_{(\boldsymbol\alpha)}  r_2^{|\boldsymbol\alpha|} {\bf W}_{\boldsymbol\alpha}$ is in $
\boldsymbol\cA({\bf D_q})$, the noncommutative von Neumann inequality implies
$$\|\varphi(r{\bf W})\|\leq
\|\varphi({\bf W})\|\quad \text{ for any } r\in [0,1).
$$
Taking $r:=\frac{r_1}{r_2}$, we obtain $
\|G(r_1{\bf W})\|\leq \|G(r_2{\bf W})\|$.

To prove part (ii), note that $G(r{\bf W})\in \boldsymbol\cA({\bf D_q}))$
and $\frac{1}{r} {\bf X}\in {\bf D_q}(\cH)^-$.
Using  again the noncommutative von Neumann inequality, we obtain
$$
\|G({\bf X})\|=\left\|G\left(r\left(\frac{1}{r}
{\bf X}\right)\right)\right\|\leq \|G(r{\bf W} )\|
$$
and
$$
\sum_{m=0}^\infty \left\|\sum_{{\boldsymbol\alpha\in \FF_{n_1}^+\times \cdots \times\FF_{n_k}^+ }\atop {|\alpha_1|+\cdots +|\alpha_k|=m}}   c_{(\boldsymbol\alpha)} {\bf X}_{\boldsymbol\alpha}\right\|
\leq \sum_{m=0}^\infty \left\|\sum_{{\boldsymbol\alpha\in \FF_{n_1}^+\times \cdots \times\FF_{n_k}^+ }\atop {|\alpha_1|+\cdots +|\alpha_k|=m}}  c_{(\boldsymbol\alpha)}  r^{|\boldsymbol\alpha|} {\bf W}_{\boldsymbol\alpha}\right\|
$$
for any ${\bf X}\in r{\bf D_q}(\cH)^-$.
Now, one can easily complete the  proof.
\end{proof}

In what follows, we prove a  maximum principle for free holomorphic functions on polydomains.

\begin{theorem}\label{max-mod1}
Let $F$ be a free  holomorphic function on ${\bf D_q}$ and let $r\in [0,1)$.  If $\cH$ is an infinite dimensional Hilbert space, then
\begin{equation*}
\begin{split}
\max\{\|F({\bf X})\|:\  {\bf X}\in r{\bf D_q}(\cH)^-\}=
\max\{\|F({\bf X})\|:\  {\bf X}\in \partial \left(r{\bf D_q}(\cH)\right)\}
 =\|F(r{\bf W})\|.
\end{split}
\end{equation*}
If, in addition, $F$ has a continuous extension $\widetilde{F}$ to ${\bf D_q^-}$  in the operator norm, then
\begin{equation*}
\begin{split}
\max\{\|\widetilde{F}({\bf X})\|:\ {\bf X}\in {\bf D_q}(\cH)^-\}
=\max\{\|\widetilde{F}({\bf X})\|:\ {\bf X}\in \partial{\bf D_q}(\cH)\} = \|F \|_\infty.
\end{split}
\end{equation*}
\end{theorem}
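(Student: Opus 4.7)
The plan is to show that for an infinite-dimensional $\cH$, the universal model $r{\bf W}$ can be realized as a boundary point of $r{\bf D_q}(\cH)$ at which $F$ attains its maximum norm. The upper bound $\|F({\bf X})\| \leq \|F(r{\bf W})\|$ for ${\bf X} \in r{\bf D_q}(\cH)^-$ is already furnished by Proposition \ref{pro}(ii) (this is essentially the noncommutative von Neumann inequality), so the entire content lies in producing a matching lower bound attained at an element of the boundary.

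To produce such a boundary point, first observe that the identity $\boldsymbol{\Delta}_{{\bf q}, {\bf W}}(I) = {\bf P}_\CC$, valid on $\bigotimes_{i=1}^k F^2(H_{n_i})$, is a rank-one orthogonal projection and hence not strictly positive. Thus ${\bf W} \in \partial {\bf D_q}$ on the universal Fock space, and by scaling $r{\bf W} \in \partial(r{\bf D_q})$ for every $r \in [0,1)$. To transport this to an arbitrary infinite-dimensional $\cH$, identify $\bigotimes_i F^2(H_{n_i})$ isometrically with a closed subspace $\cM \subseteq \cH$ and set $\widetilde{\bf W} := {\bf W} \oplus 0$ relative to $\cH = \cM \oplus \cM^\perp$. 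Since $\Phi_{q_i, 0} = 0$, a direct blockwise computation gives $\boldsymbol{\Delta}_{{\bf q}, \widetilde{\bf W}}(I_\cH) = {\bf P}_\CC \oplus I_{\cM^\perp}$, which is positive but has nontrivial kernel; hence $r\widetilde{\bf W} \in \partial(r{\bf D_q}(\cH))$. Moreover, because $\widetilde{W}_{i, \alpha_i}$ is of the form ${\bf W}_{i, \alpha_i} \oplus 0$ whenever $|\alpha_i| \geq 1$, a term-by-term expansion of the power series yields $F(r\widetilde{\bf W}) = F(r{\bf W}) \oplus F(0) I_{\cM^\perp}$. Since $|F(0)| \leq \|F(r{\bf W})\|$ by Proposition \ref{pro}(ii) applied at $0 \in r{\bf D_q}(\cH)^-$, we conclude $\|F(r\widetilde{\bf W})\| = \|F(r{\bf W})\|$, which delivers the first chain of equalities.

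For the version involving a continuous extension $\widetilde{F}$ on ${\bf D_q^-}$, Proposition \ref{reg-poly} gives ${\bf D_q}(\cH)^- = \bigcup_{0 \leq r \leq 1} r{\bf D_q}(\cH)^-$, so applying the first part for each $r < 1$ yields
$$\sup_{{\bf X} \in {\bf D_q}(\cH)^-} \|\widetilde{F}({\bf X})\| = \sup_{0 \leq r < 1} \|F(r{\bf W})\| = \|F\|_\infty.$$
The identification $A({\bf D_q}) \cong \boldsymbol{\cA}({\bf D_q})$ recalled just before the theorem provides $\widetilde{F}({\bf W}) = \lim_{r \to 1} F(r{\bf W})$ in operator norm, so $\|\widetilde{F}({\bf W})\| = \|F\|_\infty$; the embedding ${\bf W} \mapsto {\bf W} \oplus 0$ places a copy on $\partial {\bf D_q}(\cH)$, which delivers the second chain of equalities. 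The main technical hurdle is the bookkeeping of this embedding, namely verifying simultaneously that $\widetilde{\bf W}$ falls on the boundary (through the blockwise decomposition of the defect mapping) and that $F(\widetilde{\bf W})$ has the same operator norm as $F({\bf W})$; everything else reduces to direct appeal to Propositions \ref{reg-poly} and \ref{pro} together with the $A({\bf D_q}) \cong \boldsymbol{\cA}({\bf D_q})$ identification.
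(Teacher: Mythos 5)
Your proposal is correct and follows essentially the same route as the paper: the upper bound via the noncommutative von Neumann inequality, the realization of $r{\bf W}\oplus 0$ as a boundary point of $r{\bf D_q}(\cH)$ through the blockwise computation of the defect ${\bf \Delta_{q,\cdot}}(I)={\bf P}_\CC\oplus I_{\cM^\perp}$, the identity $F(r{\bf W}\oplus 0)=F(r{\bf W})\oplus F(0)$, and the $r\to 1$ limit using the $A({\bf D_q})\cong\boldsymbol\cA({\bf D_q})$ identification for the second chain of equalities. Your explicit remark that $\|F(0)\|\leq\|F(r{\bf W})\|$ (so the second block does not inflate the norm) is a small point the paper leaves implicit, but the argument is the same.
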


\begin{proof} Due to the noncommutative von Neumann inequality, we have
\begin{equation}
\label{FF}
\|F({\bf X})\|\leq \|F(r{\bf W})\|, \qquad {\bf X}\in  r{\bf D_q}(\cH)^-.
\end{equation}
Since $\cH$ is an infinite dimensional Hilbert space, there is a subspace $\cM\subset \cH$ and a unitary operator $U:\otimes_{i=1}^k F^2(H_{n_i})\to \cM$.
Consider the operators $A_{i,j}:=\left(\begin{matrix} U{\bf W}_{i,j}U^*& 0\\0&0\end{matrix}\right)$, for any $i\in \{1,\ldots, k\}$ and $j\in \{1,\ldots, n_i\}$, with respect to the decomposition $\cH=\cM\oplus \cM^\perp$.
Set ${\bf A}:=(A_1,\ldots, A_k)$ with $A_i:=(A_{i,1},\ldots, A_{i,n_i})$ and note that
${\bf \Delta_{q,A}}(I)= \left(\begin{matrix} UP_\CC U^*& 0\\0&I\end{matrix}\right)$.
Since ${\bf \Delta_{q,A}}(I)\geq 0$ but ${\bf \Delta_{q,A}}(I)$ is not invertible, Proposition \ref{reg-poly} shows that
 ${\bf A}\in \partial {\bf D_q}(\cH)\subset {\bf D_q}(\cH)^-$. Consequently, we have
 $r{\bf A}\in \partial (r{\bf D_q}(\cH))\subset r{\bf D_q}(\cH)^-$ and
 $$
 F(r{\bf A})=\left(\begin{matrix} UF(r{\bf W})U^*& 0\\0& F(0)\end{matrix}\right).
 $$
Hence, we deduce that
$\| F(r{\bf A})\|=\|F(r{\bf W})\|$.
Using now the inequality \eqref{FF},  we complete the proof of the first part of the theorem.

To prove the second part, assume  that  $F$ has a continuous extension $\widetilde{F}$ to ${\bf D_q}(\cK)^-$  in the operator norm, for any Hilbert space $\cK$.
Then $\widetilde{F}({\bf A})=\lim_{r\to 1}F(r{\bf A})$ exists in the operator norm and is equal  to
$$\left(\begin{matrix} U\lim\limits_{r\to 1}F(r{\bf W})U^*& 0\\0& F(0)\end{matrix}\right)
=\left(\begin{matrix} U\widetilde{F}({\bf W})U^*& 0\\0& F(0)\end{matrix}\right).
$$
Hence, $\|\widetilde{F}({\bf A})\|=\lim_{r\to 1}\|F(r{\bf W})\|=\|F\|_\infty$.
Since ${\bf A}\in \partial {\bf D_q}(\cH)\subset {\bf D_q}(\cH)^-$, the proof is complete.
 \end{proof}

For the rest of this section, we assume that $\cH$ is a separable infinite dimensional Hilbert space.
Since  ${\bf D_q}(\cH)$ is a complete Reinhardt domain and
 $$
 B(\cH)^{n_1}\times_c\cdots \times_c B(\cH)^{n_k}=\bigcup_{\rho>0} \rho{\bf D_q}(\cH),
$$
we can define the Minkovski functional associated with the regular polydomain ${\bf D_q}(\cH)$   to be   the function
 $m_{\bf B_n}:B(\cH)^{n_1}\times_c\cdots \times_c B(\cH)^{n_k}\to [0,\infty)$  given by
 $$
 m_{\bf D_q}({\bf X}):=\inf \left\{r>0: \ {\bf X}\in r{\bf D_q}(\cH)\right\}.
 $$
 The polyball ${\bf P_n}$ is defined by setting ${\bf P_n}(\cH):=[B(\cH)^{n_1}]_1\times_c\cdots \times_c [B(\cH)^{n_k}]_1$.

 \begin{proposition}\label{Mink} The  Minkovski functional associated with the regular polydomain ${\bf D_q}(\cH)$ has the following properties:
 \begin{enumerate}
 \item[(i)] $m_{\bf D_q}(\lambda{\bf X})=|\lambda| m_{\bf D_q}({\bf X})$ for $\lambda\in \CC$;

 \item[(ii)] $m_{\bf D_q}$ is upper semicontinuous;
 \item[(iii)]  ${\bf D_q}(\cH)=\{{\bf X}\in B(\cH)^{n_1}\times_c\cdots \times_c B(\cH)^{n_k}: \ m_{\bf D_q}({\bf X})<1\}$;
     \item[(iv)] ${\bf D_q}(\cH)^-=\{{\bf X}\in B(\cH)^{n_1}\times_c\cdots \times_c B(\cH)^{n_k}: \ m_{\bf D_q}({\bf X})\leq 1\}$;
         \item[(v)] There is a polyball $r{\bf P_n}(\cH)\subset{\bf D_q}(\cH)$ for some $r\in (0,1)$, where $m_{\bf D_q}$ is continuous.
 \end{enumerate}

 \end{proposition}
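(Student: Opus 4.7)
The plan is to unwind each definition using the Reinhardt structure and the scaling relations from Proposition \ref{reg-poly}.

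For (i), writing $\lambda=|\lambda|e^{i\theta}$ and taking ${\bf z}:=(e^{i\theta},\ldots,e^{i\theta})\in\overline{\DD}^{n_1+\cdots+n_k}$, the complete Reinhardt invariance (Proposition \ref{reg-poly}(ii)) gives ${\bf z}{\bf D_q}(\cH)={\bf D_q}(\cH)$, so $m_{\bf D_q}(\lambda{\bf X})=m_{\bf D_q}(|\lambda|{\bf X})$; the change of variables $r\mapsto|\lambda|s$ in the defining infimum then yields $m_{\bf D_q}(\lambda{\bf X})=|\lambda|m_{\bf D_q}({\bf X})$ when $\lambda\ne 0$. For $\lambda=0$, note $0\in{\bf D_q}(\cH)$ since ${\bf \Delta_{q,0}}(I)=I>0$, so $m_{\bf D_q}(0)=0$. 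For (ii), Proposition \ref{reg-poly}(i) shows each $r{\bf D_q}(\cH)$ is open, and Proposition \ref{reg-poly}(ii) applied with ${\bf z}=(r_1/r_2,\ldots)\in\overline{\DD}^{n_1+\cdots+n_k}$ gives the nested increasing property $r_1{\bf D_q}(\cH)\subseteq r_2{\bf D_q}(\cH)$ for $r_1<r_2$; consequently $\{{\bf X}:m_{\bf D_q}({\bf X})<t\}=\bigcup_{0<r<t}r{\bf D_q}(\cH)$ is open.

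For (iii), $m_{\bf D_q}({\bf X})<1$ iff ${\bf X}\in r{\bf D_q}(\cH)$ for some $r<1$, and the union over such $r$ equals ${\bf D_q}(\cH)$ by Proposition \ref{reg-poly}(ii). For (iv), I will prove $\{m_{\bf D_q}\le 1\}={\bf D_q}(\cH)^-$. The $\supseteq$ direction uses Proposition \ref{reg-poly}(ii): with ${\bf z}=(1/s,\ldots,1/s)\in\DD^{n_1+\cdots+n_k}$ for $s>1$, one has $(1/s){\bf D_q}(\cH)^-\subseteq{\bf D_q}(\cH)$, hence ${\bf D_q}(\cH)^-\subseteq s{\bf D_q}(\cH)$ for every $s>1$, giving $m_{\bf D_q}\le s$ throughout ${\bf D_q}(\cH)^-$ and thus $\le 1$. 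The $\subseteq$ direction follows by letting $s\to 1^+$ in ${\bf X}/s\in{\bf D_q}(\cH)$ so that ${\bf X}/s\to{\bf X}$ in norm with ${\bf X}\in{\bf D_q}(\cH)^-$.

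For (v), I split the polyball inclusion and the continuity. For the inclusion, for ${\bf X}\in r{\bf P_n}(\cH)$ the row-norm bound yields $\|X_{i,j}\|\le r$, hence $\|X_{i,\alpha}\|\le r^{|\alpha|}$ and $\|\Phi_{q_i,X_i}(I)\|\le \tilde q_i(r)$ with $\tilde q_i(t):=\sum_{|\alpha|\ge 1}a_{i,\alpha}t^{2|\alpha|}$, a polynomial vanishing at $0$. Expanding
\[
{\bf \Delta_{q,X}}(I)=\sum_{S\subseteq\{1,\ldots,k\}}(-1)^{|S|}\bigl(\Phi_{q_{i_1},X_{i_1}}\circ\cdots\circ\Phi_{q_{i_{|S|}},X_{i_{|S|}}}\bigr)(I)
\]
and using $\|\Phi_{q_i,X_i}\|=\|\Phi_{q_i,X_i}(I)\|$ (standard for positive maps), each summand with $S\ne\emptyset$ has norm at most $\prod_{i\in S}\tilde q_i(r)$, so for $r$ small enough the sum of non-identity terms has norm $<1$ and ${\bf \Delta_{q,X}}(I)>0$. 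For continuity, (i) and (iv) yield $\{m_{\bf D_q}\le t\}=t{\bf D_q}(\cH)^-$ for $t>0$, which is closed, and $\{m_{\bf D_q}=0\}=\bigcap_{r>0}r{\bf D_q}(\cH)^-$ is also closed; thus $m_{\bf D_q}$ is lower semicontinuous, and together with (ii) it is continuous on $B(\cH)^{n_1}\times_c\cdots\times_c B(\cH)^{n_k}$, in particular on $r{\bf P_n}(\cH)$.

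The main obstacle is the norm estimate in (v) for the polyball inclusion: without double commutativity between distinct tuples $X_s,X_t$, the composed defect mapping does not factor as a commuting product, so one must expand it into all $2^k$ nested-composition terms and invoke the multiplicativity $\|\Phi_{q_i,X_i}\circ\Phi_{q_j,X_j}\|\le\|\Phi_{q_i,X_i}\|\cdot\|\Phi_{q_j,X_j}\|$ of completely positive map operator norms to control the total perturbation.
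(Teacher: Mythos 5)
Your proof is correct, and for items (i), (iii) and (iv) it follows essentially the paper's own route: rotation invariance of ${\bf D_q}(\cH)$ plus rescaling of the defining infimum for (i), the identity ${\bf D_q}(\cH)=\bigcup_{0<r<1}r{\bf D_q}(\cH)$ for (iii), and the pair of observations ${\bf D_q}(\cH)^-\subseteq s{\bf D_q}(\cH)$ for $s>1$ together with ${\bf X}/s\to {\bf X}$ for (iv). The genuine differences are in (ii) and (v). For (ii) the paper only asserts that upper semicontinuity ``follows from (i)''; homogeneity alone does not give this, and your identification $\{m_{\bf D_q}<t\}=\bigcup_{0<r<t}r{\bf D_q}(\cH)$, open by Proposition \ref{reg-poly}(i), supplies the argument that is actually needed. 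For (v) the paper declares the inclusion $r{\bf P_n}(\cH)\subset {\bf D_q}(\cH)$ obvious and attributes continuity to the convexity of the polyball; you instead prove the inclusion by an explicit perturbation estimate, bounding $\|{\bf \Delta_{q,X}}(I)-I\|$ by $\prod_{i=1}^k(1+\tilde q_i(r))-1$ via the $2^k$-term expansion and the norm equality $\|\Phi\|=\|\Phi(I)\|$ for positive maps, and you obtain continuity by pairing the upper semicontinuity of (ii) with lower semicontinuity, which comes from the closedness of $\{m_{\bf D_q}\leq t\}=t\,{\bf D_q}(\cH)^-$ (a consequence of (i) and (iv)). This gives continuity of $m_{\bf D_q}$ on all of $B(\cH)^{n_1}\times_c\cdots\times_c B(\cH)^{n_k}$, which is stronger than the paper's local claim and avoids convexity altogether --- a welcome simplification, since convexity of $r{\bf P_n}(\cH)$ does not by itself control the Minkowski functional of the possibly non-convex set ${\bf D_q}(\cH)$. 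A minor streamlining: in your estimate for (v), positivity of each $\Phi_{q_i,X_i}$ together with $\Phi_{q_i,X_i}(I)\leq \tilde q_i(r)I$ already yields $\Phi_{q_{i_1},X_{i_1}}\circ\cdots\circ\Phi_{q_{i_m},X_{i_m}}(I)\leq \tilde q_{i_1}(r)\cdots\tilde q_{i_m}(r)\,I$ by monotonicity, so the appeal to operator norms of the maps can be dispensed with.
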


 \begin{proof} Assume that ${\bf X}\in B(\cH)^{n_1}\times_c\cdots \times_c B(\cH)^{n_k}$ and $\lambda\in \CC$ are such that ${\bf X}\neq 0$ and $\lambda\neq 0$. It is clear that
 $m_{\bf D_q}(\lambda{\bf X})=t>0$ if and only if
 $\lambda{\bf X}\in c{\bf D_q}(\cH)$ for any $c>t$, and
 $\lambda{\bf X}\notin d{\bf D_q}(\cH)$ if $0<d<t$. Since
 ${\bf D_q}(\cH)=e^{i\theta}{\bf D_q}(\cH)$ for any $\theta\in \RR$, we deduce that the latter conditions are equivalent to
 ${\bf X}\in \frac{c}{|\lambda|}{\bf D_q}(\cH)$ for any $c>t$ and
  ${\bf X}\notin \frac{d}{|\lambda|}{\bf D_q}(\cH)$  if $0<d<t$. Consequently, we obtain that $m_{\bf D_q}({\bf X})=\frac{t}{|\lambda|}$, which shows that item (i) holds.   Item (ii) follows easily from item  (i).

  Due to Proposition \ref{reg-poly}, we have $ {\bf D_q}(\cH)=\bigcup_{0<r<1}r{\bf D_q}(\cH)$. Consequently, one can easily deduce item (iii). As we saw in the proof of the same proposition, for any $r\in (0,1)$, we have ${\bf D_q}(\cH)^-\subseteq \frac{1}{r}{\bf D_q}(\cH)$. Hence, $m_{\bf D_q}({\bf X})\leq 1$ for any ${\bf X}\in  {\bf D_q}(\cH)^-$.  Assume that ${\bf X}\in B(\cH)^{n_1}\times_c\cdots \times_c B(\cH)^{n_k}$ and $m_{\bf D_q}({\bf X})= 1$. Then there is a sequence $\{t_m\}$ with $t_m>1$ and $t_m\to 1$ such that ${\bf X}\in t_m {\bf D_q}(\cH)$ for any $m\in \NN$. Taking $t_m\to 1$, we deduce that ${\bf X}\in {\bf D_q}(\cH)^-$. Consequently,   using item (iii), one can deduce   item (iv). To prove (v), note that the fact that $r{\bf P_n}(\cH)\subset{\bf D_q}(\cH)$ for some $r\in (0,1)$ is obvoius. The continuity of
 $m_{\bf D_q}$ on $r{\bf P_n}(\cH)$ is due to the convexity of the latter polyball. The proof is complete.
\end{proof}

Now, we present an analogue of Schwarz lemma from complex analysis in the context of free holomorphic functions on polydomains.

 \begin{theorem} \label{Schwarz} Let $F:{\bf D_q}(\cH)\to B(\cH)^p$ be a bounded  free holomorphic function with $\|F\|_\infty\leq 1$. If $F(0)=0$, then
 $$
 \|F({\bf X})\|\leq m_{\bf D_q}({\bf X})<1,\qquad  {\bf X}\in {\bf D_q}(\cH),
 $$
 where $m_{\bf B_n}$ is the Minkovski functional associated with the regular polydomain ${\bf D_q}(\cH)$.
In particular, if $p=1$, the free holomorphic function
 $$
 \psi({\bf X}):=\sum_{i=1}^k\sum_{j=1}^{n_j}\frac {\partial f}{\partial z_{i,j}}(0) X_{i,j},\qquad {\bf X}=\{X_{i,j}\}\in {\bf D_q}(\cH),
 $$
  has the property that $\|\psi({\bf X})\|\leq m_{\bf D_q}({\bf X})<1$ for any $ {\bf X}\in {\bf D_q}(\cH)$,  where $f({\bf z}):=F({\bf z})$ for any ${\bf z}=\{z_{i,j}\}\in {\bf D_q}(\CC)$, is the scalar representation of $F$.
 \end{theorem}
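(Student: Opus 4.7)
The plan is to reduce the statement to the classical Schwarz lemma for Banach-space valued holomorphic functions of one complex variable, by restricting $F$ to the complex line through the origin and ${\bf X}$. First I would fix ${\bf X}\in{\bf D_q}(\cH)$ and set $s:=m_{\bf D_q}({\bf X})$; by Proposition~\ref{Mink}(iii), $s<1$. For any $\epsilon>0$ with $s+\epsilon<1$, put $r:=s+\epsilon$, so that $\tfrac{1}{r}{\bf X}\in{\bf D_q}(\cH)$ directly from the definition of the Minkovski functional. The complete Reinhardt property of ${\bf D_q}(\cH)$ (Proposition~\ref{reg-poly}(ii)), applied with ${\bf z}=(\lambda,\ldots,\lambda)\in\overline{\DD}^{n_1+\cdots+n_k}$, guarantees that $\lambda\cdot\tfrac{1}{r}{\bf X}\in{\bf D_q}(\cH)$ for every $\lambda\in\overline{\DD}$, so the map
$$
G(\lambda):=F\!\left(\lambda\cdot\tfrac{1}{r}{\bf X}\right),\qquad \lambda\in\overline{\DD},
$$
is a well-defined $B(\cH)^p$-valued function on $\overline{\DD}$.

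Substituting ${\bf Y}=\lambda\cdot\tfrac{1}{r}{\bf X}$ into the power series expansion of $F$ and collecting powers of $\lambda$ yields $G(\lambda)=\sum_{m\geq 0}\lambda^{m}G_m$, where $G_m$ is the degree-$m$ homogeneous part of $F$ evaluated at $\tfrac{1}{r}{\bf X}$. Since $\tfrac{1}{r}{\bf X}\in{\bf D_q}(\cH)$, Proposition~\ref{pro}(ii) applied at this point gives $\sum_m\|G_m\|<\infty$, so the series for $G(\lambda)$ converges absolutely and uniformly on $\overline{\DD}$; in particular $G$ is a Banach-valued holomorphic function on $\DD$, continuous on $\overline{\DD}$, with $G(0)=F(0)=0$ and $\sup_{|\lambda|\leq 1}\|G(\lambda)\|\leq \|F\|_\infty\leq 1$. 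Applying the Banach-space Schwarz lemma — obtained by applying the scalar maximum modulus principle to $\varphi\bigl(G(\lambda)/\lambda\bigr)$ for each norming functional $\varphi$ after extending $G(\lambda)/\lambda$ continuously to $0$ by $G'(0)$ — yields $\|G(\lambda)\|\leq|\lambda|$ for $|\lambda|\leq 1$ and $\|G'(0)\|\leq 1$.

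Setting $\lambda=r$ gives $\|F({\bf X})\|=\|G(r)\|\leq r=s+\epsilon$; sending $\epsilon\searrow 0$ delivers $\|F({\bf X})\|\leq m_{\bf D_q}({\bf X})$, which is strictly less than $1$ by Proposition~\ref{Mink}(iii). For the ``in particular'' part ($p=1$), reading off the degree-one coefficient of $G$ gives $G'(0)=\tfrac{1}{r}\psi({\bf X})$, so the Schwarz bound $\|G'(0)\|\leq 1$ rearranges to $\|\psi({\bf X})\|\leq r$; letting $\epsilon\searrow 0$ concludes the argument. The only place that needs more than bookkeeping is the Banach-valued Schwarz lemma itself, but this is standard via Hahn--Banach plus scalar maximum modulus; once the complete Reinhardt structure has been used to extract the one-parameter slice $\lambda\mapsto\lambda\cdot\tfrac{1}{r}{\bf X}$ the rest is one-variable complex analysis.
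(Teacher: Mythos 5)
Your proposal is correct and follows essentially the same route as the paper: both arguments restrict $F$ to the complex slice $\lambda\mapsto \frac{\lambda}{t}{\bf X}$ (legitimate by the complete Reinhardt property of ${\bf D_q}(\cH)$ and Proposition~\ref{Mink}) and then invoke the classical one-variable Schwarz lemma, reading off the derivative at $0$ for the second assertion. The only cosmetic difference is that the paper scalarizes directly via $\varphi_{x,y}(\lambda)=\left<F(\tfrac{\lambda}{t}{\bf X})x,y\right>$ while you package the same Hahn--Banach reduction as a Banach-space-valued Schwarz lemma, and you let the radius $r=s+\epsilon\searrow s$ instead of letting $t\to m_{\bf D_q}({\bf X})$; these are equivalent bookkeeping choices.
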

  \begin{proof} Fix ${\bf X}\in {\bf D_q}(\cH)$. Due to Proposition \ref{Mink}, we have $m_{\bf D_q}({\bf X})<1$.  Let $t\in (0,1)$ be such that
  $m_{\bf D_q}({\bf X})<t<1$.
   Hence, using again Proposition \ref{Mink} we deduce that $\frac{1}{t}{\bf X}\in {\bf D_q}(\cH)$ which,  due to   Proposition \ref{reg-poly},  implies   $\frac{\lambda}{t}{\bf X}\in {\bf D_q}(\cH)$ for any $\lambda\in \DD:=\{z\in \CC: \ |z|<1\}$.
  For each $x,y\in \cH^{(p)}:=\cH\oplus \cdots \oplus \cH$ with $\|x\|\leq 1$ and $\|y\|\leq 1$, define the function  $\varphi_{x,y}:\DD\to \CC$ by setting
  $$
  \varphi_{x,y}(\lambda):=\left<F\left(\frac{\lambda}{t}{\bf X}\right)x,y\right>,\qquad \lambda\in \DD.
  $$
Since   $F$ is a free holomorphic function on ${\bf D_q}(\cH)$ and $\|F\|_\infty\leq 1$, we deduce that
$\varphi_{x,y}$ is a holomorphic function on the unit disc $\DD$  and $|\varphi_{x,y}(\lambda)|\leq 1$.
Since $\varphi_{x,y}(0)=0$,  the classical Schwarz lemma   implies
$|\varphi_{x,y}(\lambda)|\leq |\lambda|$ for any $\lambda\in \DD$.
Setting $\lambda=m_{\bf D_q}({\bf X})$, we deduce that
$$
  \varphi_{x,y}(\lambda):=\left<F\left(\frac{m_{\bf D_q}({\bf X})}{t}{\bf X}\right)x,y\right>\leq m_{\bf D_q}({\bf X}),\qquad \lambda\in \DD,
  $$
for any $t\in (0,1)$  with
  $m_{\bf D_q}({\bf X})<t<1$. Using the fact that $F$ is continuous on ${\bf D_q}(\cH)$ and taking $t\to m_{\bf D_q}({\bf X})$, we obtain
  $
  |\left<F({\bf X})x,y\right>|\leq m_{\bf D_q}({\bf X})
$
for any  $x,y\in \cH^{(p)}$ with $\|x\|\leq 1$ and $\|y\|\leq 1$. Hence,
$$\|F({\bf X})\|\leq m_{\bf D_q}({\bf X})<1, \qquad {\bf X}\in {\bf D_q}(\cH).
 $$

   To prove the last part of the theorem, assume that $p=1$.
Due to the classical Schwarz lemma,  we   have $|\varphi_{x,y}'(0)|\leq 1$.
On the other hand,  $\varphi_{x,y}'(0)=\left<\frac{1}{t}\psi({\bf X}) x,y\right>$, which implies $\|\psi({\bf X})\|\leq t<1$. Now, taking $t\to m_{\bf D_q}({\bf X})$, we deduce that
 $\|\psi({\bf X})\|\leq m_{\bf D_q}({\bf X})<1$.
 The proof is complete.
 \end{proof}

 \bigskip

\section{Weierstrass, Montel, Vitali theorems for the algebra $Hol(\bf{D_q})$}
 \label{Weierstrass}

In this section, we obtain Weierstrass, Montel, and Vitali type theorems for
the algebra  $Hol({\bf D_q})$ of free holomorphic functions on the  noncommutative
domain ${\bf D_q}$. This enables us to introduce a metric on
$Hol({\bf D_q})$ with respect to which it becomes a complete metric
space.

The first result   is  an analogue of
   Weierstrass theorem (see \cite{Co}) for free holomorphic functions on noncommutative polydomains.

\begin{theorem}\label{Weierstrass1}
 Let $\{G_m\}_{m=1}^\infty\subset Hol({\bf D_q})$ be a
 sequence of free holomorphic functions   such that, for each
 $r\in[0,1)$, the sequence $\{G_m(r{\bf W}\}_{m=1}^\infty$
 is convergent in the operator norm topology.
  Then there is a free holomorphic function   $F\in Hol({\bf D_q})$
   such that
$G_m(r{\bf W})$ converges to $F(r{\bf W})$, as $m\to \infty$,   for
any  $r\in [0,1)$.
\end{theorem}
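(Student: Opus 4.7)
\emph{Plan.} I extract the Taylor coefficients of each $G_m$, show they converge pointwise to the coefficients of a free holomorphic function $F$, and then verify that $F(r{\bf W})=\lim_m G_m(r{\bf W})$ for every $r\in[0,1)$.

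Write $G_m=\sum_{\boldsymbol\alpha\in\FF_{n_1}^+\times\cdots\times\FF_{n_k}^+} a_{(\boldsymbol\alpha)}^{(m)}{\bf Z}_{\boldsymbol\alpha}$ and fix $r\in(0,1)$. By hypothesis $G_m(r{\bf W})$ converges in operator norm to some $H_r$. Since ${\bf W}_{\boldsymbol\alpha}(1)=(b_{1,\alpha_1}\cdots b_{k,\alpha_k})^{-1/2}\,e^1_{\alpha_1}\otimes\cdots\otimes e^k_{\alpha_k}$, applying $G_m(r{\bf W})$ to the vacuum vector $1$ and comparing Fourier coefficients of $G_m(r{\bf W})1$ and $H_r 1$ in the orthonormal basis $\{e^1_{\alpha_1}\otimes\cdots\otimes e^k_{\alpha_k}\}$ of $\bigotimes_{i=1}^k F^2(H_{n_i})$ shows that $a_{(\boldsymbol\alpha)}:=\lim_m a_{(\boldsymbol\alpha)}^{(m)}$ exists in $\CC$ for every $\boldsymbol\alpha$.

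Next I check that the formal power series $F:=\sum_{\boldsymbol\alpha} a_{(\boldsymbol\alpha)}{\bf Z}_{\boldsymbol\alpha}$ belongs to $Hol({\bf D_q})$. Fix $s\in[0,1)$, pick $r\in(s,1)$, and set $M_r:=\sup_m\|G_m(r{\bf W})\|<\infty$. Theorem \ref{Cauchy-ineq} applied to $G_m$ with ${\bf r}=(r,\ldots,r)$ gives
\[
\Bigl(\sum_{\boldsymbol\alpha\in\Lambda_{\bf p}}\frac{|a_{(\boldsymbol\alpha)}^{(m)}|^2}{b_{1,\alpha_1}\cdots b_{k,\alpha_k}}\Bigr)^{1/2}\leq \frac{M_r}{r^{p_1+\cdots+p_k}},\qquad {\bf p}=(p_1,\ldots,p_k)\in\ZZ_+^k.
\]
Since $\Lambda_{\bf p}$ is finite, letting $m\to\infty$ yields the same bound with $a_{(\boldsymbol\alpha)}$ in place of $a_{(\boldsymbol\alpha)}^{(m)}$. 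Combined with Lemma \ref{Le}, this produces
\[
\Bigl\|\sum_{\boldsymbol\alpha\in\Lambda_{\bf p}} s^{p_1+\cdots+p_k}a_{(\boldsymbol\alpha)}{\bf W}_{\boldsymbol\alpha}\Bigr\|\leq M_r\,(s/r)^{p_1+\cdots+p_k},
\]
and the geometric sum $\sum_{{\bf p}\in\ZZ_+^k}(s/r)^{p_1+\cdots+p_k}=(1-s/r)^{-k}$ is finite. Corollary \ref{Cara} then gives $F\in Hol({\bf D_q})$.

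The final step, showing $F(r{\bf W})=H_r$, is the technical heart and I expect it to be the main obstacle, since pointwise convergence of Taylor coefficients does not automatically transmit to norm convergence of an infinite series of operators. Fix $\varepsilon>0$, pick $r'\in(r,1)$, and let $M_{r'}:=\sup_m\|G_m(r'{\bf W})\|<\infty$. The Cauchy-inequality bound above yields the uniform tail estimate
\[
\Bigl\|\sum_{p_1+\cdots+p_k>N} r^{p_1+\cdots+p_k}\sum_{\boldsymbol\alpha\in\Lambda_{\bf p}} a_{(\boldsymbol\alpha)}^{(m)}{\bf W}_{\boldsymbol\alpha}\Bigr\|\leq M_{r'}\sum_{p_1+\cdots+p_k>N}(r/r')^{p_1+\cdots+p_k},
\]
valid for all $m$, together with the analogous bound obtained by replacing $a_{(\boldsymbol\alpha)}^{(m)}$ with $a_{(\boldsymbol\alpha)}$. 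Choose $N$ so that the right-hand side is less than $\varepsilon/4$. The truncated partial sums $F_N(r{\bf W})$ and $(G_m)_N(r{\bf W})$ involve only finitely many terms, so coefficientwise convergence gives $\|(G_m)_N(r{\bf W})-F_N(r{\bf W})\|<\varepsilon/4$ for $m$ large; combining with $\|G_m(r{\bf W})-H_r\|<\varepsilon/4$ and the two tail estimates via a four-term triangle inequality yields $\|F(r{\bf W})-H_r\|<\varepsilon$, completing the argument.
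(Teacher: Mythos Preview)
Your proof is correct. The overall skeleton matches the paper's---extract coefficients, show the formal series $F$ is free holomorphic, identify $F(r{\bf W})$ with the limit---but the technical engines in the last two steps are different.

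For holomorphy of $F$, the paper argues by contradiction through the Hadamard formula (Theorem~\ref{Hadamard}): it first proves that the block norms $\bigl\|\sum_{\boldsymbol\alpha\in\Lambda_{\bf p}}c_{(\boldsymbol\alpha)}^{(m)}{\bf W}_{\boldsymbol\alpha}\bigr\|$ converge to $\bigl\|\sum_{\boldsymbol\alpha\in\Lambda_{\bf p}}c_{(\boldsymbol\alpha)}{\bf W}_{\boldsymbol\alpha}\bigr\|$ uniformly in ${\bf p}$, and then shows that a polydomain radius $<1$ for $F$ would force the same for some $G_m$. You instead feed the uniform bound $M_r=\sup_m\|G_m(r{\bf W})\|$ directly into the Cauchy inequality (Theorem~\ref{Cauchy-ineq}), pass to the limit on the finite set $\Lambda_{\bf p}$, and sum a geometric series to invoke Corollary~\ref{Cara}. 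This is shorter and avoids the Hadamard machinery entirely.

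For the identification $F(r{\bf W})=H_r$, the paper reads off the Fourier coefficients of the limit $G({\bf W})\in\boldsymbol\cA({\bf D_q})$ and matches them with those of $F(r{\bf W})$, relying on uniqueness of the Fourier representation in the polydomain algebra. Your uniform-tail-plus-finite-truncation argument is the more classical route and has the virtue of being completely self-contained: the same Cauchy estimate that gave holomorphy also gives the tail bound, uniformly in $m$, at a slightly larger radius $r'$. Both methods yield the result; yours packages the two hard steps into a single quantitative estimate, while the paper's extracts a bit more structural information (the uniform-in-${\bf p}$ convergence of homogeneous block norms) along the way.
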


\begin{proof} Let $G_m$ have the representation  $G_m:= \sum\limits_{{\bf p}=(p_1,\ldots, p_k)\in \ZZ_+^k}\sum\limits_{\boldsymbol\alpha\in  \Lambda_{\bf p}} c_{(\boldsymbol\alpha)}^{(m)} {\bf Z}_{\boldsymbol\alpha}$ and fix $r\in
(0,1)$.  Since $G_m$ is a free holomorphic function on ${\bf D_q}$, Corollary \ref{Cara}  shows that
$$
G_m(r{\bf W})=\sum\limits_{{\bf p}=(p_1,\ldots, p_k)\in \ZZ_+^k}\sum\limits_{\boldsymbol\alpha\in  \Lambda_{\bf p}} c_{(\boldsymbol\alpha)}^{(m)} r^{p_1+\cdots p_k} {\bf W}_{\boldsymbol\alpha}
$$
is in the noncommutative disc algebra $\boldsymbol\cA({\bf D_q})$. Due to the hypothesis,
  the sequence   $\{G_m(r{\bf W})\}_{m=1}^\infty$ is convergent in the operator norm of
$B(F^2(H_{n_1})\otimes \cdots \otimes F^2(H_{n_k}))$. Consequently,    there exists and operator  $G({\bf W})\in \boldsymbol\cA({\bf D_q})$  such that
\begin{equation}\label{Fm-to}
G_m(r{\bf W})\to G({\bf W}), \quad \text{ as } \
m\to\infty.
\end{equation}
 Let $\sum\limits_{{\bf p}=(p_1,\ldots, p_k)\in \ZZ_+^k}\sum\limits_{\boldsymbol\alpha\in  \Lambda_{\bf p}}
 d_{(\boldsymbol\alpha)}(r) {\bf W}_{\boldsymbol\alpha}$ be the Fourier
  representation
  of $G({\bf W})$, with
  $d_{(\boldsymbol\alpha)}(r)\in \CC$. Since  the operators ${\bf W}_{\boldsymbol\alpha}$, $\boldsymbol\alpha\in \Lambda_{\bf p}$,
   have orthogonal ranges and
   $$
   {\bf W}_{\boldsymbol \alpha}(1)=\frac{1}{\sqrt{b_{1,\alpha_1}\cdots b_{k,\alpha_k}}}e^1_{\alpha_1}\otimes \cdots \otimes e^k_{\alpha_k},
   $$
    we deduce that
\begin{equation}
\label{d_alpha}
 d_{(\boldsymbol\alpha)}(r)\frac{1}{{b_{1,\alpha_1}\cdots b_{k,\alpha_k}}}=\left< {\bf W}_{\boldsymbol \alpha}^*
G({\bf W})1,1\right>, \quad \boldsymbol\alpha=(\alpha_1,\ldots, \alpha_k)\in \FF_{n_1}^+\times \cdots \times\FF_{n_k}^+.
\end{equation}
If $\lambda_{(\boldsymbol\beta)}\in \CC$ for    $\boldsymbol\beta=(\beta_1,\ldots, \beta_k)\in  \FF_{n_1}^+\times \cdots \times\FF_{n_k}^+$ with
$|\beta_i|=p_i$, $i\in \{1,\ldots, k\}$,  we have
\begin{equation*}
\begin{split}
\Bigl|\Bigl<\sum\limits_{\boldsymbol\beta\in  \Lambda_{\bf p}}  &\lambda_{(\boldsymbol\beta)}
{\bf W}_{\boldsymbol\beta}^*(G_m(r{\bf W})  -G({\bf W}))1,1\Bigr>\Bigr| \\
&\leq \|G_m(r{\bf W})-G({\bf W})\|
\left\|\sum\limits_{\boldsymbol\beta\in  \Lambda_{\bf p}}  \lambda_{(\boldsymbol\beta)}
{\bf W}_{\boldsymbol\beta}^*\right\|.
\end{split}
\end{equation*}
 Consequently, due to relation \eqref{d_alpha} and  Lemma \ref{Le}, we have
\begin{equation*}
\begin{split}
&\left|\sum\limits_{\boldsymbol\beta\in  \Lambda_{\bf p}}\frac{1}{ {b_{1,\beta_1}\cdots b_{k,\beta_k}}}
(r^{p_1+\cdots + p_k}c_{(\boldsymbol\beta)}^{(m)}-d_{(\boldsymbol\beta)}(r))\lambda_{(\boldsymbol\beta)}\right|\\
& \qquad\leq \|G_m(r{\bf G})-G({\bf G})\|
\left(\sum\limits_{\boldsymbol\beta\in  \Lambda_{\bf p}}
\frac{1}{ {b_{1,\beta_1}\cdots b_{k,\beta_k}}}|\lambda_{(\boldsymbol\beta)}|^2\right)^{1/2}
\end{split}
\end{equation*}
for any $\lambda_{(\boldsymbol\beta)}\in \CC$ with    $\boldsymbol\beta=(\beta_1,\ldots, \beta_k)\in  \FF_{n_1}^+\times \cdots \times\FF_{n_k}^+$ and
$|\beta_i|=p_i$.
Consequently,
$$
\left(\sum\limits_{\boldsymbol\beta\in  \Lambda_{\bf p}}\frac{1}{{b_{1,\beta_1}\cdots b_{k,\beta_k}}}
|r^{p_1+\cdots + p_k}c_{(\boldsymbol\beta)}^{(m)}-d_{(\boldsymbol\beta)}(r)|^2\right)^{1/2}\leq
\|G_m(r{\bf W})-G({\bf W})\|
$$
for any ${\bf p}:=(p_1,\ldots, p_k)\in \ZZ_+^k$. Now, since $\|G_m(r{\bf W})-G({\bf W})\| \to 0$, as $m\to\infty$, the inequality
above  implies  $r^{p_1+\cdots + p_k}c_{(\boldsymbol\beta)}^{(m)}\to d_{(\boldsymbol\beta)}(r)$, as $m\to\infty$,
for any $\boldsymbol\beta=(\beta_1,\ldots, \beta_k)\in  \FF_{n_1}^+\times \cdots \times\FF_{n_k}^+$ with
$|\beta_i|=p_i$ and any ${\bf p}\in \ZZ_+^k$. Therefore,
$c_{(\boldsymbol\beta)}:=\lim\limits_{m\to\infty}c_{\boldsymbol\beta)}^{(m)}$ exists and
$d_{(\boldsymbol\beta)}(r)=r^{p_1+\cdots + p_k} c_{(\boldsymbol\beta)}$. Define the  formal power series $F:=
\sum\limits_{\boldsymbol\alpha\in \FF_{n_1}^+\times \cdots \times\FF_{n_k}^+} c_{(\boldsymbol\alpha)} {\bf Z}_{\boldsymbol\alpha}$ and let us prove  that
$F$ is a free holomorphic function on  the noncommutative polydomain
${\bf D_q}$.  Using the inequality above, the fact that
$\left\|\sum\limits_{\boldsymbol\beta\in  \Lambda_{\bf p}}
b_{1,\beta_1}\cdots b_{k,\beta_k}
{\bf W}_{\boldsymbol\beta} {\bf W}_{\boldsymbol\beta}^* \right\|=1$,  and  Lemma \ref{Le}, we deduce that
\begin{equation*}
\begin{split}
 &\left\|\sum\limits_{\boldsymbol\alpha\in  \Lambda_{\bf p}} c_{(\boldsymbol\alpha)}^{(m)} r^{p_1+\cdots p_k} {\bf W}_{\boldsymbol\alpha}
 -\sum\limits_{\boldsymbol\alpha\in  \Lambda_{\bf p}} c_{(\boldsymbol\alpha)} r^{p_1+\cdots p_k} {\bf W}_{\boldsymbol\alpha}\right\|\\
 &=r^{p_1+\cdots +p_k}
 \left\|\sum\limits_{\boldsymbol\alpha\in  \Lambda_{\bf p}}\left(c_{(\boldsymbol\alpha)}^{(m)}-c_{(\boldsymbol\alpha)}\right)
  {\bf W}_{\boldsymbol\alpha}\right\|\\
  &\leq r^{p_1+\cdots +p_k}
  \left(\sum\limits_{\boldsymbol\alpha\in  \Lambda_{\bf p}}\frac{1}{b_{1,\alpha_1}\cdots b_{k,\alpha_k}}
\left|c_{(\boldsymbol\alpha)}^{(m)}-c_{(\boldsymbol\alpha)}\right|^2\right)^{1/2}
\left\|\sum\limits_{\boldsymbol\beta\in  \Lambda_{\bf p}}
b_{1,\beta_1}\cdots b_{k,\beta_k}
{\bf W}_{\boldsymbol\beta} {\bf W}_{\boldsymbol\beta}^* \right\|^{1/2}.
 \\
&\leq \|G_m(r{\bf W})-G({\bf W})\|
\end{split}
\end{equation*}
for any  $r\in [0,1)$ and any ${\bf p}:=(p_1,\ldots, p_k)\in \ZZ_+^k$.
This shows that
\begin{equation*}
\left\|\sum\limits_{\boldsymbol\alpha\in  \Lambda_{\bf p}} c_{(\boldsymbol\alpha)}^{(m)}  {\bf W}_{\boldsymbol\alpha}\right\|
 \to \left\|\sum\limits_{\boldsymbol\alpha\in  \Lambda_{\bf p}} c_{(\boldsymbol\alpha)}  {\bf W}_{\boldsymbol\alpha}\right\|,\quad \text{ as } \
m\to\infty,
\end{equation*}
uniformly with respect to ${\bf p}\in \ZZ_+^k$. Since  the operators ${\bf W}_{\boldsymbol\alpha}$, $\boldsymbol\alpha\in \Lambda_{\bf p}$,
   have orthogonal ranges, the later convergence is equivalent to
   \begin{equation}
\label{conv-coef}\left\|\sum\limits_{\boldsymbol\alpha\in  \Lambda_{\bf p}} \left|c_{(\boldsymbol\alpha)}^{(m)}\right|^2  {\bf W}_{\boldsymbol\alpha}^* {\bf W}_{\boldsymbol\alpha}\right\|^{1/2}
 \to \left\|\sum\limits_{\boldsymbol\alpha\in  \Lambda_{\bf p}} \left|c_{(\boldsymbol\alpha)}\right|^2  {\bf W}_{\boldsymbol\alpha}^*{\bf W}_{\boldsymbol\alpha}\right\|^{1/2},\quad \text{ as } \
m\to\infty,
\end{equation}
uniformly with respect to ${\bf p}\in \ZZ_+^k$.
 Assume now  that
$\gamma>1$ and
$$
\limsup_{{\bf p}=(p_1,\ldots, p_k)\in \ZZ_+^k}
\left\|\sum\limits_{\boldsymbol\alpha\in  \Lambda_{\bf p}} \left|c_{(\boldsymbol\alpha)}\right|^2  {\bf W}_{\boldsymbol\alpha}^*{\bf W}_{\boldsymbol\alpha}\right\|^{1/2(p_1+\cdots +p_k)}>\gamma.
$$
Then,  there are infinitely many ${\bf p}=(p_1,\ldots, p_k)\in \ZZ_+^k$  such that
\begin{equation}
\label{sup-ga}
\left\|\sum\limits_{\boldsymbol\alpha\in  \Lambda_{\bf p}} \left|c_{(\boldsymbol\alpha)}\right|^2  {\bf W}_{\boldsymbol\alpha}^*{\bf W}_{\boldsymbol\alpha}\right\|^{1/2}>\gamma^{p_1+\cdots +p_k}.
\end{equation}
Let $\lambda$  be such that $1<\lambda< \gamma$ and let $\epsilon>0$
be with the property that $\epsilon<\gamma-\lambda$. Note
that $\epsilon<\gamma^{p_1+\cdots +p_k}-\lambda^{p_1+\cdots +p_k}$ for any ${\bf p}=(p_1,\ldots, p_k)\in \ZZ_+^k$. The
convergence in  relation \eqref{conv-coef} implies that there exists
$K_\epsilon\in \NN$ such that
$$
\left|\left\|\sum\limits_{\boldsymbol\alpha\in  \Lambda_{\bf p}} \left|c_{(\boldsymbol\alpha)}^{(m)}\right|^2  {\bf W}_{\boldsymbol\alpha}^* {\bf W}_{\boldsymbol\alpha}\right\|^{1/2}
  -\left\|\sum\limits_{\boldsymbol\alpha\in  \Lambda_{\bf p}} \left|c_{(\boldsymbol\alpha)}\right|^2  {\bf W}_{\boldsymbol\alpha}^*{\bf W}_{\boldsymbol\alpha}\right\|^{1/2}\right|<
\epsilon
$$
for any  $m>K_\epsilon$ and any ${\bf p}=(p_1,\ldots, p_k)\in \ZZ_+^k$. Consequently, if we fix $m>K_\epsilon$ and  using
  \eqref{sup-ga}, we deduce that
$$
\left\|\sum\limits_{\boldsymbol\alpha\in  \Lambda_{\bf p}} \left|c_{(\boldsymbol\alpha)}^{(m)}\right|^2  {\bf W}_{\boldsymbol\alpha}^* {\bf W}_{\boldsymbol\alpha}\right\|^{1/2}\geq
\gamma^{p_1+\cdots +p_k}-\epsilon>\lambda^{p_1+\cdots +p_k}
$$
for   infinitely many ${\bf p}=(p_1,\ldots, p_k)\in \ZZ_+^k$.  Therefore,
$$
\limsup_{{\bf p}=(p_1,\ldots, p_k)\in \ZZ_+^k}
\left\|\sum\limits_{\boldsymbol\alpha\in  \Lambda_{\bf p}} \left|c_{(\boldsymbol\alpha)}^{(m)}\right|^2  {\bf W}_{\boldsymbol\alpha}^* {\bf W}_{\boldsymbol\alpha}\right\|^{1/2(p_1+\cdots +p_k)}\geq
\lambda>1
$$
 Now, using  Theorem \ref{Hadamard} and Lemma \ref{Le}, we conclude that $G_m$  is not a free
holomorphic function  on ${\bf D_q}$, which
is a contradiction. Therefore, we must have
$$
\limsup_{{\bf p}=(p_1,\ldots, p_k)\in \ZZ_+^k}
\left\|\sum\limits_{\boldsymbol\alpha\in  \Lambda_{\bf p}} \left|c_{(\boldsymbol\alpha)}\right|^2  {\bf W}_{\boldsymbol\alpha}^*{\bf W}_{\boldsymbol\alpha}\right\|^{1/2(p_1+\cdots +p_k)}\leq 1.
$$
Using again  Theorem \ref{Hadamard}  and Lemma \ref{Le}, we deduce that  $F$ is a
free holomorphic function on  ${\bf D_q}$. Consequently,
$$F(r{\bf W})=\sum\limits_{{\bf p}=(p_1,\ldots, p_k)\in \ZZ_+^k}\sum\limits_{\boldsymbol\alpha\in  \Lambda_{\bf p}} c_{(\boldsymbol\alpha)} r^{p_1+\cdots p_k}{\bf W}_{\boldsymbol\alpha}$$
 is convergent in the operator norm topology and $G({\bf W})=F(r{\bf W})$. Due to relation
\eqref{Fm-to}, for each $r\in [0,1)$,  we have
$$
\|G_m(r{\bf W})-F(r{\bf W})\|\to 0, \quad \text{
as }\ m\to \infty.
$$
  The proof is
complete.
 \end{proof}

We say that  $\cG\subset Hol({\bf D_q})$ is a {\it normal set } if each sequence
$\{G_m\}_{m=1}^\infty$  in $\cG$ has a subsequence
$\{G_{m_k}\}_{k=1}^\infty$ which converges to an element $G\in Hol({\bf D_q})$,
i.e.,   for any $r\in [0,1)$,
$$
\|G_{m_k}(r{\bf W})-F(r{\bf W})\|\to 0, \quad
\text{ as }\ k\to \infty.
$$
 The set $\cG$ is called  {\it locally bounded} if, for any $r\in[0,1)$,
 there exists $M>0$ such that
$\|F(r{\bf W})\|\leq M$ for all $F\in \cG$.

An important consequence of Theorem \ref{Weierstrass1} is   the
following noncommutative version of Montel theorem (see \cite{Co}).

\begin{theorem}\label{Montel}
Let $\cG\subset Hol({\bf D_q})$  be a family of free holomorphic
functions. Then the  following statements are equivalent:
\begin{enumerate}
\item[(i)] $\cG$ is  locally bounded.
\item[(ii)]  $\cG$ is a  normal set.
\end{enumerate}
\end{theorem}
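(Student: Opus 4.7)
The plan is to establish the two implications separately, with the easy direction handled by a direct contradiction argument and the hard direction done by a diagonal/normal family argument combined with the Weierstrass theorem just proved.

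For (ii) $\Rightarrow$ (i), I would argue by contradiction. If $\cG$ is not locally bounded, there exists $r\in[0,1)$ and a sequence $\{G_m\}\subset \cG$ with $\|G_m(r{\bf W})\|\to\infty$. By normality, some subsequence $\{G_{m_k}\}$ converges to an $F\in Hol({\bf D_q})$ in the sense that $\|G_{m_k}(r{\bf W})-F(r{\bf W})\|\to 0$; hence $\{G_{m_k}(r{\bf W})\}$ is norm-bounded, contradicting $\|G_{m_k}(r{\bf W})\|\to\infty$.

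For the substantive direction (i) $\Rightarrow$ (ii), let $\{G_m\}\subset \cG$ with representations $G_m=\sum_{\boldsymbol\alpha}c^{(m)}_{(\boldsymbol\alpha)}{\bf Z}_{\boldsymbol\alpha}$. First I would use local boundedness: for every $r'\in[0,1)$ there is $M(r')>0$ such that $\|G_m(r'{\bf W})\|\le M(r')$ for all $m$. Applying the Cauchy type inequality (Theorem \ref{Cauchy-ineq}) gives, for every ${\bf p}=(p_1,\ldots,p_k)\in\ZZ_+^k$,
\begin{equation*}
\left\|\sum_{\boldsymbol\alpha\in\Lambda_{\bf p}}\frac{1}{b_{1,\alpha_1}\cdots b_{k,\alpha_k}}|c^{(m)}_{(\boldsymbol\alpha)}|^2\right\|^{1/2}\le \frac{M(r')}{(r')^{p_1+\cdots+p_k}}.
\end{equation*}
In particular each individual coefficient $\{c^{(m)}_{(\boldsymbol\alpha)}\}_{m\ge 1}$ is bounded in $\CC$. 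Using that $\FF_{n_1}^+\times\cdots\times\FF_{n_k}^+$ is countable, a standard diagonal extraction produces a subsequence $\{G_{m_k}\}$ such that $c^{(m_k)}_{(\boldsymbol\alpha)}\to c_{(\boldsymbol\alpha)}$ as $k\to\infty$ for every $\boldsymbol\alpha$.

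Next I would show that for each fixed $r\in[0,1)$ the sequence $\{G_{m_k}(r{\bf W})\}$ is Cauchy in operator norm. Pick $r'\in(r,1)$. By Lemma \ref{Le} together with the Cauchy bound above,
\begin{equation*}
\left\|\sum_{\boldsymbol\alpha\in\Lambda_{\bf p}}c^{(m)}_{(\boldsymbol\alpha)}r^{p_1+\cdots+p_k}{\bf W}_{\boldsymbol\alpha}\right\|\le \left(\frac{r}{r'}\right)^{p_1+\cdots+p_k}M(r'),
\end{equation*}
and this dominating bound is summable over ${\bf p}\in\ZZ_+^k$ since $\sum_{q\ge 0}\binom{q+k-1}{k-1}(r/r')^q<\infty$. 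Thus the tail of the homogeneous expansion of $G_m(r{\bf W})$ is uniformly (in $m$) small. Combined with coefficient-wise convergence of each finite block $\sum_{\boldsymbol\alpha\in\Lambda_{\bf p}}c^{(m_k)}_{(\boldsymbol\alpha)}r^{p_1+\cdots+p_k}{\bf W}_{\boldsymbol\alpha}$, this yields that $\{G_{m_k}(r{\bf W})\}$ converges in operator norm for every $r\in[0,1)$. The Weierstrass theorem (Theorem \ref{Weierstrass1}) then produces an $F\in Hol({\bf D_q})$ with $G_{m_k}(r{\bf W})\to F(r{\bf W})$ in operator norm for each $r\in[0,1)$, which is exactly the required normal convergence.

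The main obstacle I anticipate is ensuring uniform control of the tails of the homogeneous expansions. The key trick is the ``one-step-outside'' choice $r'\in(r,1)$: local boundedness at $r'$ controls the coefficients through the Cauchy inequality with the larger radius, and this gives the geometric factor $(r/r')^{p_1+\cdots+p_k}$ that makes the dominated-convergence argument work uniformly in $k$. Once that is established, the coefficient-wise convergence delivered by the diagonal argument upgrades to norm convergence of $G_{m_k}(r{\bf W})$, and Weierstrass supplies the limit as a bona fide free holomorphic function.
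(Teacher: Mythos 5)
Your proposal is correct and follows essentially the same route as the paper: the easy direction (ii)$\Rightarrow$(i) by the same contradiction argument, and (i)$\Rightarrow$(ii) via the Cauchy inequality of Theorem \ref{Cauchy-ineq} to bound coefficients, Bolzano--Weierstrass plus a diagonal extraction for coefficient-wise convergence, a finite-block-plus-geometric-tail estimate (your $r'\in(r,1)$ plays exactly the role of the paper's $r/t$ with $t>1$), and Theorem \ref{Weierstrass1} to produce the limit function.
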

\begin{proof}
First, we prove the implication (i)$\implies$(ii). Assume that  $\cG$ is  locally bounded., i.e., for each $r\in [0,1)$, there is $M_r>0$ such that
$$
\sup_{G\in \cG} \|G(r{\bf W})\|=M_r<\infty.
$$
Each function $G\in \cG$ has a representation
$$
G({\bf X})=\sum\limits_{{\bf p}=(p_1,\ldots, p_k)\in \ZZ_+^k}\sum\limits_{\boldsymbol\alpha\in  \Lambda_{\bf p}} c_{(\boldsymbol\alpha)}^G {\bf X}_{\boldsymbol\alpha},
$$
 where $c_{(\boldsymbol\alpha)}^G\in \CC$.
 Let $\{G_m\}_{m=1}^\infty$  be a sequence of elements in $\cG$.
 Since $| c_{(\boldsymbol g)}^G|=\|G(0)\|\leq M_0$ for any $G\in \cG$, the classical Bolzano-Weierstrass theorem for bounded sequences of complex numbers shows that there is a subsequence $\{G_{\gamma_k}\}_{k=1}^\infty$ of  $\{G_m\}_{m=1}^\infty$ such that $\{c_{(\boldsymbol g)}^{G_{\gamma_k}}\}$ is convergent in $\CC$.
  On the other hand, due to Theorem \ref{Cauchy-ineq}, for each ${\bf p}=(p_1,\ldots, p_k)\in \ZZ_+^k$ and $r\in (0,1)$, we have
 \begin{equation}
 \label{Cau-estim}
\left(\sum_{\boldsymbol\alpha=(\alpha_1,\ldots, \alpha_k)\in  \Lambda_{\bf p}} \frac{1}{{b_{1,\alpha_1}\cdots b_{k,\alpha_k}}}| c_{(\boldsymbol g)}^G|^2\right)^{1/2}\leq \frac{1}{r^{p_1+\cdots +p_k}}\|G(r{\bf W})\|\leq \frac{1}{r^{p_1+\cdots +p_k}}M_r
\end{equation}
for any $G\in \cG$.
Using this inequality, the classical Bolzano-Weierstrass theorem, and the diagonal process, an inductive argument shows that there is a subsequence
$\{G_{m_s}\}_{s=1}^\infty$ of $\{G_m\}_{m=1}^\infty$ such that, for each
$\boldsymbol\alpha=(\alpha_1,\ldots, \alpha_k)\in \FF_{n_1}^+\times \cdots \times \FF_{n_k}^+$,  the sequence $\{c_{(\boldsymbol \alpha)}^{G_{m_s}}\}_{s=1}^\infty$ is convergent in $\CC$, as $m_s\to \infty$.

Fix $r\in (0,1)$ and let $t>1$. We prove that the sequence $\{G_{m_s}(\frac{r}{t}{\bf W})\}_{s=1}^\infty$ is convergent in the operator norm topology of $B(F^2(H_{n_1})\otimes \cdots \otimes F^2(H_{n_k}))$.
For each $N, s, \ell\in \NN$, set
$$
\Lambda_N(s,\ell):=\sum_{m=0}^N\sum_{{{\bf p}:=(p_1,\ldots, p_k)\in\ZZ_+^k}\atop{p_1+\cdots +p_k=m}}
 \left\|\sum_{\boldsymbol\alpha\in \Lambda_{\bf p}}
 c_{(\boldsymbol \alpha)}^{G_{m_s}}\left(\frac{r}{t}\right)^{p_1+\cdots +p_k} {\bf W}_{\boldsymbol \alpha} - c_{(\boldsymbol \alpha)}^{G_{m_\ell}}\left(\frac{r}{t}\right)^{p_1+\cdots +p_k} {\bf W}_{\boldsymbol \alpha} \right\|
 $$
and note that relation \eqref{Cau-estim} implies
\begin{equation*}
\begin{split}
&\left\|G_{m_s}\left(\frac{r}{t}{\bf W}\right)-G_{m_\ell}\left(\frac{r}{t}{\bf W}\right)\right\|\\
&\leq \Lambda_N(s,\ell)
+\sum_{m=N+1}^\infty \sum_{{{\bf p}:=(p_1,\ldots, p_k)\in\ZZ_+^k}\atop{p_1+\cdots +p_k=m}} \left(\frac{r}{t}\right)^m
\left(\sum_{\boldsymbol\alpha=(\alpha_1,\ldots, \alpha_k)\in  \Lambda_{\bf p}} \frac{1}{{b_{1,\alpha_1}\cdots b_{k,\alpha_k}}}| c_{(\boldsymbol g)}^{G_{m_s}}- c_{(\boldsymbol g)}^{G_{m_\ell}}|^2\right)^{1/2}\\
&\leq
 \Lambda_N(s,\ell)
+\sum_{m=N+1}^\infty \sum_{{{\bf p}:=(p_1,\ldots, p_k)\in\ZZ_+^k}\atop{p_1+\cdots +p_k=m}} \left(\frac{r}{t}\right)^m \frac{2}{r^m} M_r\\
&\leq \Lambda_N(s,\ell)
+\sum_{m=N+1}^\infty   \left(\begin{matrix} m+k-1\\k-1\end{matrix}\right) \frac{2}{t^m}M_r.
\end{split}
\end{equation*}
Now, it is clear that we can choose $N$ large enough so that
$$
\sum_{m=N+1}^\infty   \left(\begin{matrix} m+k-1\\k-1\end{matrix}\right) \frac{2}{t^m}M_r<\frac{\epsilon}{2}.
$$
Since,  for each
$\boldsymbol\alpha=(\alpha_1,\ldots, \alpha_k)\in \FF_{n_1}^+\times \cdots \times \FF_{n_k}^+$,  the sequence $\{c_{(\boldsymbol \alpha)}^{G_{m_s}}\}_{s=1}^\infty$ is convergent in $\CC$, as $m_s\to \infty$, we can choose $k_0\in \NN$ such that
$\Lambda_N(s,\ell)<\frac{\epsilon}{2}$ for any $s,\ell\geq k_0$.
Putting together these results we conclude that
$\left\|G_{m_s}\left(\frac{r}{t}{\bf W}\right)-G_{m_\ell}\left(\frac{r}{t}{\bf W}\right)\right\|\leq \epsilon$ for any $s,\ell\geq k_0$, which proves
that the sequence $\{G_{m_s}(\frac{r}{t}{\bf W})\}_{s=1}^\infty$ is convergent in the operator norm topology. Since $\{\frac{r}{t}:\ r\in [0,1), t>1\}=[0,1)$, we deduce that, for each $t\in [0,1)$, $\{G_{m_s}({t}{\bf W})\}_{s=1}^\infty$ is convergent in the operator norm topology, as $m_s\to \infty$.
Applying Theorem \ref{Weierstrass1}, we conclude that $\cG$ is a normal set  and the implication (i)$\implies$(ii) is true.

We prove the converse by contradiction. Assume that $\cG$ is a normal set and that there is $r_0\in (0,1)$ such that $\sup_{G\in \cG} \|G(r_0{\bf W})\|=\infty$. Then, there is a sequence  $\{G_m\}_{m=1}^\infty\subset \cG$ such that $\|G_m(r_0{\bf W})\|\to \infty$, as $m\to \infty$.
Since $\cG$ is a normal set, there is a subsequence $\{G_{m_k}\}_{k=1}^\infty$ and $G\in Hol({\bf D_q})$ such that $\|G_{m_k}(r{\bf W})-G(r{\bf W})\|\to 0$, as $k\to \infty$, for any $r\in [0,1)$. This contradicts the fact that $\|G_{m_k}(r_0 {\bf W})\|\to \infty$, as $m_k\to \infty$. The proof is complete.
\end{proof}

The next result is an analogue of  Vitali's theorem in our setting.

\begin{theorem} Let $\{G_m\}_{m=1}^\infty$ be a sequence of free holomorphic function (with scalar coefficients) on the polydomain ${\bf D_q}$ such that , for each $r\in [0,1)$,
$$\sup_{m}\|G_m(r{\bf W})\|<\infty.
$$
If there is $\gamma\in (0,1)$ such that $G_m(\gamma {\bf W})$ converges in the operator norm, as $m\to \infty$, then there is $G\in Hol({\bf D_q})$ such that
$$
\|G_m(r{\bf W})-G(r{\bf W})\|\to 0, \quad \text{ as } \ m\to \infty.
$$
for any $r\in [0,1)$.
\end{theorem}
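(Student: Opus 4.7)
The plan is to run the classical Vitali subsequence argument, with Montel (Theorem~4.2) supplying compactness and with uniqueness of Fourier coefficients supplying rigidity at $\gamma \mathbf{W}$.

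First, I would apply Theorem~\ref{Montel}: the hypothesis that $\sup_m\|G_m(r{\bf W})\|<\infty$ for every $r\in[0,1)$ is precisely local boundedness of the family $\{G_m\}\subset Hol({\bf D_q})$, so $\{G_m\}$ is normal. Hence every subsequence of $\{G_m\}$ has a further subsequence $\{G_{m_k}\}$ and some $G\in Hol({\bf D_q})$ with $\|G_{m_k}(r{\bf W})-G(r{\bf W})\|\to 0$ for each $r\in[0,1)$. Denote by $L$ the operator-norm limit of $G_m(\gamma{\bf W})$ guaranteed by the hypothesis. For any subsequential limit $G$ produced above, taking $r=\gamma$ yields $G_{m_k}(\gamma{\bf W})\to G(\gamma{\bf W})$, while the full-sequence convergence gives $G_{m_k}(\gamma{\bf W})\to L$. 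Hence every subsequential limit satisfies $G(\gamma{\bf W})=L$.

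Next I would prove that the subsequential limit is unique. Suppose $F=\sum c_{(\boldsymbol\alpha)}^F {\bf Z}_{\boldsymbol\alpha}$ and $H=\sum c_{(\boldsymbol\alpha)}^H {\bf Z}_{\boldsymbol\alpha}$ are two elements of $Hol({\bf D_q})$ with $F(\gamma{\bf W})=H(\gamma{\bf W})$ as bounded operators. Both $F(\gamma{\bf W})$ and $H(\gamma{\bf W})$ belong to $\boldsymbol\cA({\bf D_q})$ and admit Fourier-type expansions in the $\{{\bf W}_{\boldsymbol\alpha}\}$; computing as in formula \eqref{d_alpha} (i.e.\ using that $\{{\bf W}_{\boldsymbol \alpha}(1)\}$ is an orthogonal system with $\|{\bf W}_{\boldsymbol\alpha}(1)\|^2=1/(b_{1,\alpha_1}\cdots b_{k,\alpha_k})$, so that $\langle {\bf W}_{\boldsymbol\alpha}^*\,\cdot\,1,1\rangle$ reads off the $\boldsymbol\alpha$-coefficient), one gets
\[
\gamma^{|\alpha_1|+\cdots+|\alpha_k|}\,c_{(\boldsymbol\alpha)}^F=\gamma^{|\alpha_1|+\cdots+|\alpha_k|}\,c_{(\boldsymbol\alpha)}^H
\]
for every $\boldsymbol\alpha\in \FF_{n_1}^+\times\cdots\times \FF_{n_k}^+$. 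Since $\gamma>0$, this forces $c_{(\boldsymbol\alpha)}^F=c_{(\boldsymbol\alpha)}^H$ for all $\boldsymbol\alpha$, hence $F=H$ in $Hol({\bf D_q})$. Combining with the previous paragraph, every convergent subsequence of $\{G_m\}$ has the same limit $G$.

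Finally, I would close with the standard subsequence-of-subsequence argument. Fix $r\in[0,1)$ and suppose, toward a contradiction, that $\|G_m(r{\bf W})-G(r{\bf W})\|\not\to 0$. Then there exist $\epsilon>0$ and a subsequence $\{G_{m_j}\}$ with $\|G_{m_j}(r{\bf W})-G(r{\bf W})\|\ge \epsilon$ for all $j$. Normality of $\{G_{m_j}\}$ (inherited from $\{G_m\}$) produces a further subsequence converging in the above sense to some limit, which by the uniqueness step must equal $G$; in particular its value at $r{\bf W}$ converges in operator norm to $G(r{\bf W})$, contradicting the lower bound $\epsilon$. Thus $\|G_m(r{\bf W})-G(r{\bf W})\|\to 0$ for every $r\in[0,1)$, which is the desired conclusion.

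The main obstacle is the Fourier-coefficient uniqueness argument in the second paragraph: one has to be sure that equality of two free holomorphic functions at the single point $\gamma{\bf W}$ really forces their full power series to agree. This is the only step that is not purely formal, but it follows cleanly from the orthogonality of $\{{\bf W}_{\boldsymbol\alpha}(1)\}$ already exploited in the proof of Theorem~\ref{Weierstrass1}. Everything else is the Montel/Vitali template transported to the noncommutative polydomain setting.
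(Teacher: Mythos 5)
Your proposal is correct and follows essentially the same route as the paper: Montel's theorem for normality, uniqueness of a free holomorphic function determined by its value at $\gamma{\bf W}$ (via the Fourier coefficients read off from $\langle {\bf W}_{\boldsymbol\alpha}^*\,\cdot\,1,1\rangle$, which the paper simply cites as ``uniqueness of the representation''), and a subsequence-of-subsequences argument to upgrade to convergence of the full sequence at every $r$. The only cosmetic difference is that the paper runs the contradiction with two subsequences separated by $\delta$ at some $r_0$ and then invokes the Weierstrass theorem at the end, whereas you identify the common subsequential limit $G$ directly from Montel's output; both are the same Vitali template.
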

\begin{proof}
Assume that there is $r_0\in [0,1)$ such that $\{G_m(r_0{\bf W})\}_{m=1}^\infty$  is not convergent in the operator norm. Then there is $\delta>0$ and subsequences
$\{G_{m_k}\}_{k=1}^\infty$  and $\{G_{s_k}\}_{k=1}^\infty$  of $\{G_m\}_{m=1}^\infty$  such that
\begin{equation}
\label{GG}
\|G_{m_k}(r_0 {\bf W})-G_{s_k}(r_0 {\bf W})\|\geq \delta
\end{equation}
for any $k\in \NN$. Due to Theorem \ref{Montel}, there is $F\in Hol({\bf D_q})$ and a subsequence $\{k_p\}_{p=1}^\infty$ of  $\{k\}_{k=1}^\infty$  such that
\begin{equation}
\label{GF}
G_{m_{k_p}}(r{\bf W})\to  F(r{\bf W}), \quad \text{ as }  p\to \infty,
\end{equation}
for any $r\in [0,1)$.
Note that relation \eqref{GG} implies
\begin{equation}
\label{GG2}
\|G_{m_{k_p}}(r_0 {\bf W})-G_{s_{k_p}}(r_0 {\bf W})\|\geq \delta
\end{equation}
for any $p\in \NN$. Applying again  Theorem \ref{Montel}, there is $H\in Hol({\bf D_q})$ and a subsequence $\{p_q\}_{q=1}^\infty$ of  $\{p\}_{p=1}^\infty$  such that
\begin{equation}
\label{GH}
G_{s_{k_{p_q}}}(r{\bf W})\to  H(r{\bf W}), \quad \text{ as }  p\to \infty,
\end{equation}
for any $r\in [0,1)$. Due to relation \eqref{GG2}, we have
$$
\|G_{m_{k_{p_q}}}(r_0 {\bf W})-G_{s_{k_{p_q}}}(r_0 {\bf W})\|\geq \delta
$$
for any $q\in \NN$. Taking $q\to \infty$ in the latter inequality and using relations \eqref{GF} and \eqref{GH}, we obtain
\begin{equation}
\label{FH}
\|F(r_0{\bf W})-H(r_0{\bf W})\|\geq \delta.
\end{equation}
On the other hand, since the sequence $\{G_m(\gamma {\bf W})\}_{m=1}^\infty$ converges in the operator norm, as $m\to\infty$, relations \eqref{GF} and \eqref{GH} imply $F(\gamma {\bf W})=H(\gamma{\bf W})$. Since $\gamma\in (0,1)$ and $F,H\in Hol({\bf D_q})$,  and due to the uniqueness of the representation of free holomorphic functions on polydomains, we deduce that $F=H$, which contradicts \eqref{FH}. Therefore, for each $r\in [0,1)$, $\{G_m(r{\bf W})\}_{m=1}^\infty$  is convergent in the operator norm, as $m\to \infty$. Applying Theorem \ref{Weierstrass1}, we complete the proof.
\end{proof}

  If $F,G\in Hol( {\bf D_q})$ and
$0<r<1$, we define
$$
d_r(F,G):=\|F(r{\bf W})-G(r{\bf W})\|.
$$
Due to the maximum principle of Theorem \ref{max-mod1}, if  $\cH$ is
an infinite dimensional Hilbert space,  then
$$
d_r(F,G)=\sup_{{\bf X}\in r{\bf D_q}(\cH)^-}
\|F({\bf X})-G({\bf X})\|.
$$
 Let $0<r_m<1$ be such that $\{r_m\}_{m=1}^\infty$ is an increasing sequence
  convergent  to $1$.
For any $F,G\in Hol({\bf D_q})$, we define
$$
\rho (F,G):=\sum_{m=1}^\infty \left(\frac{1}{2}\right)^m
\frac{d_{r_m}(F,G)}{1+d_{r_m}(F,G)}.
$$

  Using standards arguments,
    one can    show  that $\rho$ is a metric
on $Hol({\bf D_q})$.

\begin{theorem}\label{complete-metric}
$\left(Hol({\bf D_q}), \rho\right)$  is a complete metric space.
 \end{theorem}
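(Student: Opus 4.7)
The plan is to show that every $\rho$-Cauchy sequence in $Hol({\bf D_q})$ has a limit inside $Hol({\bf D_q})$, by reducing completeness in $\rho$ to the operator-norm convergence of $\{F_n(r{\bf W})\}$ for each $r \in [0,1)$ and then invoking the Weierstrass-type result, Theorem \ref{Weierstrass1}.

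First I would verify that $\rho$ is indeed a metric (symmetry, non-degeneracy via the uniqueness of the Taylor coefficients of a free holomorphic function, and the triangle inequality coming from the standard observation that $t \mapsto t/(1+t)$ is subadditive on $[0,\infty)$). Then, for a $\rho$-Cauchy sequence $\{F_n\} \subset Hol({\bf D_q})$, fix $m \in \NN$. From $\rho(F_n,F_\ell) \to 0$ and the fact that each term of the defining series is nonnegative, I deduce that
$$
\frac{d_{r_m}(F_n,F_\ell)}{1+d_{r_m}(F_n,F_\ell)} \longrightarrow 0, \qquad n,\ell \to \infty,
$$
which forces $d_{r_m}(F_n,F_\ell) \to 0$. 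Hence for every $m$, the sequence $\{F_n(r_m {\bf W})\}$ is Cauchy in the operator norm of $B(\otimes_{i=1}^k F^2(H_{n_i}))$.

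Next, I would upgrade this to convergence for \emph{every} $r \in [0,1)$. Given such an $r$, pick $m$ with $r < r_m < 1$. By Proposition \ref{pro}(i) applied to the free holomorphic function $F_n - F_\ell$, one has
$$
\|(F_n - F_\ell)(r{\bf W})\| \leq \|(F_n-F_\ell)(r_m {\bf W})\| = d_{r_m}(F_n,F_\ell),
$$
so $\{F_n(r{\bf W})\}$ is operator-norm Cauchy, hence convergent. Therefore the hypothesis of Theorem \ref{Weierstrass1} is satisfied, and there exists $F \in Hol({\bf D_q})$ such that $F_n(r{\bf W}) \to F(r{\bf W})$ in the operator norm for every $r \in [0,1)$; equivalently, $d_r(F_n,F) \to 0$ for each $r \in [0,1)$.

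Finally, I would show $\rho(F_n,F) \to 0$. Since $0 \leq \tfrac{d_{r_m}(F_n,F)}{1+d_{r_m}(F_n,F)} \leq 1$, the $m$-th term of the series defining $\rho(F_n,F)$ is dominated by $(1/2)^m$, which is summable. Each term tends to $0$ as $n \to \infty$ by the previous step, so by dominated convergence for series $\rho(F_n,F) \to 0$. The main obstacle in this plan is not hard but requires care: it is the step bridging ``Cauchy at the countable set $\{r_m\}$'' to ``Cauchy at every $r \in [0,1)$'', which I handle by combining the density of $\{r_m\}$ in $[0,1)$ with the monotonicity $d_r \leq d_{r_m}$ for $r \leq r_m$ supplied by Proposition \ref{pro}(i); without this monotonicity, one could not directly apply Theorem \ref{Weierstrass1}. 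This completes the proof.
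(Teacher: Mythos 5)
Your proposal is correct and follows essentially the same route as the paper's proof: pass from $\rho$-Cauchy to operator-norm Cauchy of $\{F_n(r_m{\bf W})\}$ for each $m$, invoke Theorem \ref{Weierstrass1} to produce the limit $F\in Hol({\bf D_q})$, and then translate convergence at each $r$ back into $\rho(F_n,F)\to 0$. Your explicit use of the monotonicity $d_r\leq d_{r_m}$ from Proposition \ref{pro}(i) to bridge from the countable set $\{r_m\}$ to all $r\in[0,1)$ is a welcome clarification of a step the paper leaves implicit, but it does not change the argument in any essential way.
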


\begin{proof}

First, note  that  if $\epsilon>0$, then  there exists $\delta>0$
and $m\in \NN$ such that, for any   $F,G\in Hol({\bf D_q})$,
 \ $d_{r_m}(F,G)<\delta\implies \rho(F,G)<\epsilon$.
Conversely, if $\delta>0$ and $m\in \NN$ are fixed, then there is
$\epsilon>0$ such that, for  any $F,G\in Hol({\bf D_q})$,  \ $
\rho(F,G)<\epsilon \implies d_{r_m}(F,G) <\delta.
$

 Now, let
$\{g_k\}_{k=1}^\infty\subset Hol({\bf D_q})$ be a Cauchy sequence in the
metric $\rho$. An immediate consequence of the  observation above  is
that
  $\{g_k(r_m{\bf W})\}_{k=1}^\infty $  is a Cauchy sequence
   in $B(F^2(H_{n_1})\otimes \cdots \otimes F^2(H_{n_k}))$,
   for any
  $m\in \NN$. Consequently, for each $m\in \NN$,
the sequence $\{g_k(r_m{\bf W})\}_{k=1}^\infty$ is
convergent in the operator norm. According to
Theorem \ref{Weierstrass1}, there is a free holomorphic function
$g\in Hol({\bf D_q})$
   such that
$g_k(r{\bf W})$ converges to $g(r{\bf W})$  for
any $r\in [0,1)$. Using again the observation made at the beginning
of this proof, we deduce that $\rho(g_k, g)\to 0$, as $k\to\infty$,
which completes the proof.
 \end{proof}

We remark that, Theorem \ref{Montel} and Theorem \ref{complete-metric} imply the following
compactness criterion for subsets of $Hol({\bf D_q})$.

\begin{corollary}
A subset ~$\cG$ of  $(Hol({\bf D_q}), \rho)$ is compact if and only if
it is closed and locally bounded.
\end{corollary}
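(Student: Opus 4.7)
The plan is to prove both implications by reducing compactness in this particular metric space to the combination of (a) convergence in $\rho$ being equivalent to the normal convergence defined just before Theorem \ref{Montel}, and (b) sequential compactness being equivalent to compactness in a metric space. The essential inputs are Theorem \ref{Montel} (for one direction) and a continuity observation (for the other).

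For the forward direction, suppose $\cG$ is compact. Since every compact subset of a metric space is closed, it remains only to show local boundedness. I would argue that for any fixed $r\in[0,1)$, the map $F\mapsto \|F(r\mathbf{W})\|$ is continuous with respect to $\rho$: using the monotonicity established in Proposition \ref{pro}(i), if $r<r_m$ for some $m$ in the defining sequence of $\rho$, then $d_r(F,G)\leq d_{r_m}(F,G)$, and the latter goes to $0$ whenever $\rho(F_k,F)\to 0$. Hence $\|F_k(r\mathbf{W})\|\to\|F(r\mathbf{W})\|$. A continuous real-valued function on a compact set is bounded, so $\sup_{F\in\cG}\|F(r\mathbf{W})\|<\infty$, which is exactly local boundedness.

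For the backward direction, suppose $\cG$ is closed and locally bounded. Since $(Hol({\bf D_q}),\rho)$ is a metric space (Theorem \ref{complete-metric}), compactness is equivalent to sequential compactness, so I would pick an arbitrary sequence $\{F_k\}\subset\cG$ and extract a $\rho$-convergent subsequence with limit in $\cG$. By Montel's theorem (Theorem \ref{Montel}), the local boundedness of $\cG$ gives us a subsequence $\{F_{k_j}\}$ and $F\in Hol({\bf D_q})$ with $\|F_{k_j}(r\mathbf{W})-F(r\mathbf{W})\|\to 0$ for every $r\in[0,1)$. In particular, $d_{r_m}(F_{k_j},F)\to 0$ for each $m$, and since each summand of $\rho$ is bounded by $(1/2)^m$, the dominated convergence type estimate
\[
\rho(F_{k_j},F)\leq \sum_{m=1}^{N}\Bigl(\tfrac{1}{2}\Bigr)^m \frac{d_{r_m}(F_{k_j},F)}{1+d_{r_m}(F_{k_j},F)} + \sum_{m=N+1}^{\infty}\Bigl(\tfrac{1}{2}\Bigr)^m
\]
lets me choose $N$ to make the tail arbitrarily small and then $j$ large to make the head small, giving $\rho(F_{k_j},F)\to 0$. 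Since $\cG$ is closed in $(Hol({\bf D_q}),\rho)$, the limit $F$ belongs to $\cG$, which establishes sequential compactness and completes the proof.

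Neither direction really has a hard step; the only subtlety is translating between the three equivalent notions of convergence in play (in $\rho$, in operator norm at each $r\mathbf{W}$, and normal convergence as defined before Theorem \ref{Montel}). Once that equivalence is articulated via Proposition \ref{pro}(i) and the explicit definition of $\rho$, both implications are immediate from Theorem \ref{Montel} and standard metric-space facts.
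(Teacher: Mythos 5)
Your proof is correct and is essentially the argument the paper intends: the paper gives no written proof, merely noting that the corollary follows from Theorem \ref{Montel} together with the metric-space structure of $(Hol({\bf D_q}),\rho)$, and your two directions (continuity of $F\mapsto\|F(r{\bf W})\|$ via Proposition \ref{pro}(i) for the necessity of local boundedness, and Montel plus the tail estimate on $\rho$ for sufficiency) are exactly the standard details being suppressed.
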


\bigskip

\bigskip

       %

      \end{document}